\numberwithin{equation}{section}
\newtheorem{theorem}{Theorem}[section]
\newtheorem{definition}[theorem]{Definition}
\newtheorem{conjecture}[theorem]{Conjecture}
\newtheorem{example}[theorem]{Example}
\newcommand{\R}{\mathbb{R}}
\newcommand{\e}{\epsilon}
\newcommand{\pr}{\partial}
\DeclareMathOperator{\dist}{dist}
\newcommand{\bn}{\mathbf{n}}
\newtheorem{thm}[theorem]{Theorem}
\newtheorem*{thm*}{Theorem}
\newtheorem{lem}[theorem]{Lemma}
\newtheorem{prop}[theorem]{Proposition}
\newtheorem{coro}[theorem]{Corollary}
\theoremstyle{definition}
\newtheorem{rmk}[theorem]{Remark}
\begin{document}

\begin{abstract}
We construct new examples of immortal mean curvature flow of smooth embedded connected hypersurfaces in manifolds, which converge to minimal hypersurfaces with multiplicity $2$ as time approaches infinity.

\end{abstract}

\title{Mean curvature flow with multiplicity $2$ convergence in manifolds}

\author{Jingwen Chen, Ao Sun}
\address{Department of Mathematics, University of Pennsylvania,
David Rittenhouse Lab,
209 South 33rd Street,
Philadelphia, PA 19104}
\email{jingwch@sas.upenn.edu}
\address{Department of Mathematics, Lehigh University, Chandler-Ullmann Hall, Bethlehem, PA 18015}
\email{aos223@lehigh.edu}
\date{}
\maketitle

\section{Introduction}

The mean curvature flow is the gradient flow of the area functional. This feature makes it one of the most natural extrinsic geometric flows. Mean curvature flow is widely used to study geometry and topology, and we refer the readers to some of the applications in \cite{Grayson89_shortening, WangMT02_longtimeMCF, HaslhoferKetover19_minimalS2, BuzanoHaslhoferHershkovitz19_ModuliTori,BuzanoHaslhoferHershkovitz21_ModuliSpheres,  GuaracoLimaPallete21_MCFHyperbolic}. A central topic in the study of mean curvature flow is to understand the singular behavior and long-time behavior, and it is particularly important to the potential applications.

Among the possible exotic phenomena that may show up, higher multiplicity convergence is one of the most significant phenomena and has attracted much attention. Roughly speaking, higher multiplicity convergence means that the hypersurfaces can be decomposed into several connected components outside a small set, and each one of the connected components converges to the limit hypersurface smoothly. Recently, the Multiplicity One Conjecture proposed by Ilmanen was resolved by Bamler-Kleiner \cite{BamlerKleiner23_multiplicity}, showing that higher multiplicity can not show up as the blow-up of a singularity of a mean curvature flow of closed embedded surfaces in $\R^3$. In contrast, the authors \cite{chen2023mean} constructed an immortal mean curvature flow in $\R^3$, showing that higher multiplicity convergence can show up as the long-time behavior of mean curvature flow of connected surfaces in $\R^3$.

In this paper, we construct new examples of mean curvature flow in closed manifolds that converge to higher multiplicity minimal hypersurfaces as time goes to infinity. Recall that an immortal mean curvature flow is a smooth mean curvature flow that exists for all positive time. In the following, suppose $S^n$ equips the round metric and $S^n\times[-1,1]$ equips the standard product metric. It is not too hard to see that $S^n\times\{0\}$ is a minimal hypersurface.

\begin{theorem}
For all $n\geq 2$, there exists a smooth embedded connected immortal mean curvature flow $(M(t))_{t>0}$ in $S^n\times[-1,1]$ such that $M(t)$ converges to $S^n\times\{0\}$ with multiplicity $2$ as $t\to\infty$.
\end{theorem}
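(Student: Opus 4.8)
The plan is to produce $(M(t))$ by a gluing/modulation argument: $M(t)$ is modeled on two round spheres $S^n\times\{\pm h(t)\}$ joined near a fixed point $p_0\in S^n$ by a tiny catenoidal neck of scale $a(t)$, with $a(t),h(t)\to0$ as $t\to\infty$ but never before. It is convenient (though not essential) to reduce the symmetry first. Let $SO(n)\subset SO(n+1)$ fix the axis through $\pm p_0$ and seek $SO(n)$-invariant flows; in geodesic polar coordinates $(\theta,\omega)\in[0,\pi]\times S^{n-1}$ about $p_0$, such a hypersurface is swept by a curve $\gamma(t)$ in the rectangle $Q=[0,\pi]\times[-1,1]$, and MCF becomes the gradient flow of the weighted length $\int_\gamma(\sin\theta)^{n-1}\,ds$ (a weighted curve shortening flow, meeting $\{\theta=0,\pi\}$ orthogonally where the orbit $S^{n-1}$ collapses). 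Imposing also the reflection symmetry $z\mapsto -z$, the model configuration is a thin symmetric loop covering the segment $\{z=0\}$ (which is $S^n\times\{0\}$) twice: two nearly horizontal arcs at heights $\pm h$ that cap off orthogonally at $\theta=\pi$, joined near $\theta=0$ by a rescaled catenoid arc of scale $a$ --- equivalently, two spheres $S^n\times\{\pm h\}$ with a thin tube drilled between them near $p_0$, which is a smooth embedded connected hypersurface.

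For small $a>0$ one constructs an approximate flow $\widetilde M_a$ ($a=a(t)$): in the inner region near $p_0$ it is a truncated catenoid of scale $a$ in the Euclidean model $T_{p_0}(S^n\times\R)\cong\R^{n+1}$ --- which exists exactly because $n\ge2$ --- in the outer region it is $S^n\times\{\pm h\}$, and the two are matched on an intermediate annulus with $h=h(a)$ forced by the catenoid asymptotics ($h\asymp a$ for $n\ge3$; $h\asymp a\log(1/a)$ for $n=2$). The residual mean curvature of $\widetilde M_a$ is supported on the matching region and, crucially, pulls the two sheets \emph{together} (a minimal neck holds the sheets like a soap film, and a thin neck contracts under MCF, dragging the attached sheets inward). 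Projecting this error onto the one marginal direction of the linearization --- the scaling Jacobi field of the catenoid --- gives a modulation ODE $\dot a=-c_n\,\Phi(a)$ with $c_n>0$ and $\Phi(a)>0$ behaving like a positive power of $a$ (with a logarithmic factor when $n=2$). The key point is that $\int_0\Phi(a)^{-1}\,da=+\infty$, so along this ODE $a(t)$ (hence $h(t)$) decreases to $0$ only as $t\to\infty$: this is precisely the mechanism producing an immortal flow.

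Linearizing the symmetry-reduced flow about $\widetilde M_{a(t)}$, one checks that after removing the scaling direction (absorbed into the modulation) the operator is uniformly coercive in parabolic norms weighted to the degenerating neck: the outer sheets are rigid to leading order because $\Delta_{S^n}$ has a spectral gap above the constants, and the constant/vertical-translation mode is killed by the reflection $z\mapsto-z$. A contraction mapping argument then yields, for every sufficiently small $a_0>0$, an exact mean curvature flow $M(t)=\widetilde M_{a(t)}+(\text{small correction})$ on $[0,\infty)$ with $a(0)=a_0$; it is smooth, embedded and connected for all $t$ (the neck scale $a(t)$ is strictly positive on every finite time interval) and stays compactly inside $S^n\times(-1,1)$. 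Since $a(t),h(t)\to0$ and the correction tends to $0$, $M(t)\to S^n\times\{0\}$ with multiplicity $2$ as $t\to\infty$ --- smoothly on compact subsets of $(S^n\setminus\{p_0\})\times[-1,1]$ and as varifolds on all of $S^n\times[-1,1]$, the neck concentrating at $(p_0,0)$. (Relabeling time, or taking $a_0$ as small as desired, gives the flow for all $t>0$.)

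The main obstacle is the inner analysis on the shrinking neck. As $a\to0$ the rescaled neck converges to the full noncompact catenoid, whose Jacobi operator has essential spectrum reaching $0$ alongside its bounded Jacobi fields (scaling and translations). One must therefore compute the projection of the gluing error onto the scaling mode sharply enough to obtain both the sign of $c_n$ and the exact rate $\Phi(a)$ in the modulation ODE, and establish linear estimates that degenerate at worst mildly as $a\to0$ on function spaces adapted to the neck, matching them across the inner and outer regions. Controlling this interaction, rather than any region in isolation, is the crux; everything else is a (delicate but) standard gluing scheme, as in the authors' earlier Euclidean construction \cite{chen2023mean}.
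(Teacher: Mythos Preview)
Your proposal takes a genuinely different route from the paper. The paper does \emph{not} use gluing or modulation; it runs an interpolation argument on a one--parameter family of rotationally symmetric initial data $\{\rho_\delta\}$. For small $\delta$ the flow pinches at the axis in finite time (comparison with a rescaled $\lambda$--Angenent curve forces the head point to $0$); for large $\delta$ the head point is trapped away from the axis by a spherical catenoid barrier. Openness of the ``pinches at the axis'' condition (via the cylindrical tangent flow and an Angenent--torus barrier) then produces a critical parameter $\eta$ at which the flow is immortal. Multiplicity--$2$ convergence is obtained afterwards by pure barrier/maximum--principle arguments: an Angenent--curve barrier gives a linear height bound near the axis, and an explicit supersolution for $\arctan f_x$ yields a gradient estimate that forces $f_\eta\to0$ in $C^1$ away from the pole. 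No linearization, no matched asymptotics, no modulation ODE. In particular, your closing remark is misleading: the authors' earlier paper \cite{chen2023mean} uses exactly the \emph{same} interpolation/barrier scheme, not a gluing construction, so it is not a precedent for the analysis you would need.

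As an alternative strategy your outline is plausible in spirit but has real gaps that are not routine. The immortality of your flow rests entirely on the claims $c_n>0$ and $\int_0\Phi(a)^{-1}\,da=+\infty$; you assert $\Phi(a)$ is a positive power of $a$ but do not derive it, and the divergence of that integral requires the power to be at least one, which is not obvious from the heuristics you give. More seriously, the linear step you call ``uniformly coercive after removing the scaling mode'' is precisely the hard part: the catenoid's Jacobi operator has continuous spectrum down to $0$, so there is no spectral gap, and closing a contraction over an \emph{infinite} time interval with a degenerating neck requires weighted parabolic estimates of the sort that make ancient/eternal gluing constructions substantial papers in their own right. What the paper's approach buys is that all of this is bypassed by the maximum principle, at the cost of being non--constructive (no decay rates, no explicit profile, and the critical $\eta$ is inexplicit). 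What your approach would buy, if carried out, is an explicit asymptotic description of the neck dynamics --- but at a much higher technical price than you indicate.
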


Of course, one can close up $S^n\times[-1,1]$ to get a compact manifold, or extend it to a complete noncompact manifold.

This new family of examples shows that the higher multiplicity convergence of mean curvature flow can occur for compact mean curvature flow in a closed manifold. In particular, there exists an infinite time singularity. This contrasts with a result of Jingyi Chen and Weiyong He \cite{ChenHe10_SingularTime}, which proved that in manifold with certain curvature and growth conditions, the mean curvature flow can only have finite time singularities. We note that their growth condition does not hold for $S^n\times\R$. 

Our new examples enhance our understanding of using mean curvature flow to construct minimal hypersurfaces. When higher multiplicity convergence shows up, the geometry and topology of the limit hypersurface may be different from the flow. This phenomenon has been discovered in other fields of geometric analysis, such as the Min-max theory. In \cite{WangZhou22_minmax-higher-multiplicity}, Zhichao Wang and Xin Zhou constructed an example to show higher multiplicity minimal surfaces can show up in the Almgren-Pitts min-max theory. In \cite{WangZhou23_existence4spheres}, Zhichao Wang and Xin Zhou observed that the possible obstruction to the existence of $4$ minimal spheres in $S^3$ with an arbitrary metric is the higher multiplicity convergence in the min-max theory.

Motivated by the generic multiplicity $1$ property in the min-max theory proved by Xin Zhou \cite{Zhou19_multi1}, we expect that higher multiplicity convergence is a non-generic property. 

\begin{conjecture}
Starting from a generic closed hypersurface in a Riemannian manifold, either the long-time limit has multiplicity $1$, or the hypersurface breaks into different connected components, and each one of the connected components converges to a multiplicity $1$ limit.
\end{conjecture}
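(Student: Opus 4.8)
Since this is a conjecture, what follows is a plan rather than a proof; the strategy I would pursue is modeled on Zhou's generic multiplicity-one theorem in min-max theory \cite{Zhou19_multi1} and is informed by the structure of the examples in \cite{chen2023mean} and the present paper. First I would pass to weak flows so that a long-time limit always exists: for a closed hypersurface $M_0$ in a closed Riemannian manifold $(N^{n+1},g)$, run the level-set (Brakke) flow; since the area is non-increasing and bounded below, along some sequence $t_i\to\infty$ it subconverges as varifolds to a stationary integral varifold $V_\infty$, which away from its singular set decomposes as $\sum_j m_j\Gamma_j$ with the $\Gamma_j$ smooth minimal hypersurfaces and $m_j\in\N$. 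The conjecture then asserts that for generic $M_0$, either every $m_j=1$, or the flow has already separated into connected components each converging with multiplicity one; equivalently, one must rule out, for generic $M_0$, a \emph{connected} immortal flow with some $m_j\ge 2$.

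Next I would analyze the local picture near a sheet $\Gamma=\Gamma_j$ of multiplicity $k\ge 2$. After passing to a double cover if $\Gamma$ is one-sided or has nontrivial normal bundle (as in \cite{chen2023mean}), for large $t$ the flow in a tubular neighborhood of $\Gamma$ is the normal graph of $k$ ordered functions $u_1\le\cdots\le u_k\to 0$ in $C^\infty(\Gamma)$, each solving a perturbation of the linearized equation $\pr_t u=L_\Gamma u$ with $L_\Gamma=\Delta_\Gamma+|A_\Gamma|^2+\Ric_g(\nu,\nu)$. A \L{}ojasiewicz--Simon / center-manifold analysis (Colding--Minicozzi, Schulze) shows that the asymptotics of each $u_\ell$, and in particular of the separation $u_k-u_1$, are governed by a finite-dimensional ODE system on the projections onto the low eigenspaces of $L_\Gamma$; the requirement that the flow stay connected and embedded while all $k$ sheets collapse onto $\Gamma$ forces a non-crossing/balancing constraint on these finite-dimensional data, and this constraint is precisely the algebraic core of the constructions in \cite{chen2023mean} and here. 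I would then encode ``the flow converges with some multiplicity $\ge 2$ and stays connected'' as the vanishing of a finite-dimensional obstruction map $\Phi$ on a Banach manifold $\mathcal{U}$ of admissible initial hypersurfaces and run a Sard--Smale transversality argument to show that $0$ is a regular value of $\Phi$ on a residual subset; combined with a compactness input bounding the number of minimal hypersurfaces $\Gamma$ that can occur as limits with a given area bound, this would present the ``bad'' set as a countable union of positive-codimension submanifolds, hence meager. A complementary and perhaps more robust route uses area directly: if $\Gamma$ is not strictly stable, a perturbation of $M_0$ pushing the flow across $\Gamma$ should strictly lower the limiting area $\lim_{t\to\infty}\area(M(t))$ below $k\,\area(\Gamma)$, forcing a strictly smaller and generically multiplicity-one limit; if $\Gamma$ is strictly stable, detaching one sheet within a one-parameter family of perturbations should already leave $\{\Phi=0\}$.

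The main obstacle is controlling how the \emph{infinite-time} limit depends on the initial data. For finite-time singularities one has parabolic rescaling and White's stratification as a clean local model, but the long-time limit is global, and near a degenerate minimal hypersurface lying in a nontrivial family --- exactly the situation of $S^n\times\{0\}\subset S^n\times[-1,1]$ --- the structure of the set of initial data attracted to a multiplicity-$k$ limit is delicate: the examples of this paper show it is nonempty, and the conjecture asks one to show it is nowhere dense. This seems to require a quantitative uniqueness-of-limit statement --- a \L{}ojasiewicz--Simon estimate with an effective decay rate that is uniform in the initial data --- together with a genuine transversality theory for immortal mean curvature flows, neither of which is presently available in the generality needed; developing them is, in my view, where the real difficulty lies.
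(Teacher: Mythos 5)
The statement you are addressing is stated in the paper as an open conjecture: the paper offers no proof of it, only motivation from Zhou's generic multiplicity-one theorem in min-max theory and from the examples constructed here (whose limit $S^n\times\{0\}$ is stable, consistent with the expectation that higher multiplicity is non-generic). Your submission, as you say yourself, is a research program rather than a proof, so there is nothing here that can be checked against a proof in the paper, and nothing in your text settles the conjecture.

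As a program sketch your ingredients are the natural ones (weak flows to guarantee subsequential varifold limits, a sheet decomposition of the flow near a multiplicity-$k$ limit, \L{}ojasiewicz--Simon asymptotics, a transversality/Sard--Smale argument), but the load-bearing steps are precisely the ones left undefined. Concretely: (i) the ``obstruction map $\Phi$'' is never constructed --- the condition ``the flow stays connected and converges with multiplicity $\ge 2$'' is an asymptotic, infinite-time condition on a possibly non-unique subsequential limit, and it is not shown (or known) that it can be encoded as the zero set of a finite-dimensional, Fredholm-type map on a Banach manifold of initial data, which is what Sard--Smale requires; (ii) the argument needs the map from initial data to the long-time limit to be at least continuous, but uniqueness of the limit and an effective, initial-data-uniform \L{}ojasiewicz--Simon rate are exactly what you concede is unavailable, so the ``bad set is a countable union of positive-codimension submanifolds'' conclusion has no foundation yet; (iii) the area-based alternative is not conclusive: perturbing $M_0$ so the flow crosses an unstable $\Gamma$ may lower the limiting area, but nothing prevents the new limit from again having a component of multiplicity $\ge 2$ elsewhere, and in the strictly stable case (the case realized by the paper's examples, where the limit is stable) ``detaching one sheet'' is an assertion, not an argument --- the paper's interpolation construction shows the multiplicity-two behavior sits at a codimension-one threshold in a one-parameter family, which is evidence for genericity of its failure but is far from a proof in general; (iv) the one-sided case needs care, since a connected flow converging as a double cover of a one-sided minimal hypersurface produces multiplicity two in the varifold sense without any degeneration, and your sketch does not say how the conjecture's dichotomy is to be interpreted or established there. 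In short, your plan is a reasonable map of the difficulties, and you correctly locate them, but no step of it is carried out, and the conjecture remains open both in the paper and in your proposal.
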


We remark that the limit minimal surfaces in our examples are stable. Also motivated by the min-max theory and the resolution of the Multiplicity One Conjecture by Bamler and Kleiner \cite{BamlerKleiner23_multiplicity}, we expect that higher multiplicity convergence can not occur if the limit minimal hypersurface is unstable.

\begin{conjecture}
    Suppose $N$ is a Riemannian manifold, $(M(t))_{t>0}$ is a mean curvature flow of closed embedded hypersurfaces. if the limit of $M(t)$ as $t\to\infty$ is a minimal hypersurface $\Sigma$ with multiplicity $k$, and $\Sigma$ is unstable, then $k=1$.
\end{conjecture}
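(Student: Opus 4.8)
The plan is to show that a multiplicity-$k$ limit with $k\geq 2$ is incompatible with instability: a stack of embedded sheets is forced to carry a \emph{nonzero, sign-definite} component along the unstable Jacobi mode of $\Sigma$, and mean curvature flow amplifies this component, contradicting convergence to $\Sigma$. So, suppose toward a contradiction that $k\geq 2$. Once $M(t)$ enters a fixed tubular neighborhood of $\Sigma$, the regularity theory for mean curvature flow (White's local regularity theorem and Allard-type sheeting, together with interior parabolic estimates) upgrades the multiplicity-$k$ convergence to smooth graphical convergence away from a closed ``neck'' set $B(t)\subset\Sigma$ with $\mathrm{diam}\,B(t)\to 0$ (the set $B(t)$ is forced only by connectedness of $M(t)$; for disconnected $M(t)$ it is empty). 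Hence for all large $t$, over $\Omega_t:=\Sigma\setminus B(t)$ the flow is a union of ordered normal graphs $u_1(\cdot,t)<\cdots<u_k(\cdot,t)$, each tending to $0$ in $C^\infty_{loc}(\Sigma\setminus\{p\})$ where $p:=\lim B(t)$, and each solving $\partial_t u_i=\mathcal{J}_\Sigma u_i+Q_i$, where $\mathcal{J}_\Sigma:=\Delta_\Sigma+|A_\Sigma|^2+\Ric(\nu,\nu)$ is the Jacobi operator and $|Q_i|\leq C\|u_i\|_{C^2}\bigl(|u_i|+|\nabla u_i|+|\nabla^2 u_i|\bigr)$.

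Next I would fix a single gap function $w:=u_{i_0+1}-u_{i_0}$. By the strong maximum principle $w>0$ on $\Omega_t$ (embedded sheets cannot touch), while $w\to 0$ in $C^0_{loc}(\Sigma\setminus\{p\})$. Subtracting the graph equations, $\partial_t w=\mathcal{J}_\Sigma w+E$ with $|E|\leq\eps(t)\bigl(|w|+|\nabla w|+|\nabla^2 w|\bigr)$ and $\eps(t)\to 0$. Since $\Sigma$ is unstable, $\mathcal{J}_\Sigma$ has a positive eigenvalue (equivalently, the stability operator $-\mathcal{J}_\Sigma$ has a negative one), and so does the Dirichlet realization of $\mathcal{J}_\Sigma$ on a fixed domain $\Omega'\Subset\Sigma\setminus\{p\}$ chosen close enough to $\Sigma$: the top Dirichlet eigenvalue $\mu(\Sigma\setminus B_r(p))$ tends to $\mu(\Sigma)>0$ as $r\to 0$, since removing a small ball is a perturbation of vanishing capacity (this uses $\dim\Sigma\geq 2$; for a one-dimensional $\Sigma$ the actual neck width, not merely its diameter, must enter). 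Let $\psi>0$ solve $\mathcal{J}_\Sigma\psi=\mu(\Omega')\psi$ on $\Omega'$ with $\psi|_{\partial\Omega'}=0$, and set $a(t):=\int_{\Omega'}w(\cdot,t)\,\psi\,d\mathrm{vol}_\Sigma$. For large $t$ one has $\overline{\Omega'}\Subset\Omega_t$, hence $w(\cdot,t)>0$ on $\overline{\Omega'}$ and $a(t)>0$, while $a(t)\to 0$.

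Then I would differentiate $a$ and integrate by parts against $\psi$. Since $\psi$ vanishes on $\partial\Omega'$ and $\partial_\nu\psi<0$ there (Hopf), the boundary flux is $-\int_{\partial\Omega'}w\,\partial_\nu\psi=\int_{\partial\Omega'}w\,|\partial_\nu\psi|\geq 0$, so
\[
a'(t)=\mu(\Omega')\,a(t)+\int_{\partial\Omega'}w\,|\partial_\nu\psi|+\int_{\Omega'}E\,\psi\ \geq\ \mu(\Omega')\,a(t)-\Bigl|\int_{\Omega'}E\,\psi\Bigr|.
\]
If one can show $\bigl|\int_{\Omega'}E\,\psi\bigr|\leq\tfrac12\mu(\Omega')\,a(t)$ for all large $t$, then $a'(t)\geq\tfrac12\mu(\Omega')\,a(t)>0$, so $a$ grows at least exponentially, contradicting $a(t)\to 0$; hence $k=1$. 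This last estimate is the heart of the matter: as $\eps(t)\to 0$, it reduces to a uniform bound $\int_{\Omega'}\bigl(|w|+|\nabla w|+|\nabla^2 w|\bigr)\psi\leq C\int_{\Omega'}w\,\psi$, i.e.\ to showing that as $t\to\infty$ the normalized profile of $w(\cdot,t)$ becomes asymptotically proportional to the first Jacobi eigenfunction. I would attempt this by parabolic rescaling: any limit of the positive normalized profiles (on the exhausting domains $\Omega_t\uparrow\Sigma\setminus\{p\}$) is a positive bounded ancient solution of the linearized equation $\partial_t v=\mathcal{J}_\Sigma v$ on $\Sigma\setminus\{p\}$, which extends across the removable point $p$; positivity then forces such a solution on the closed manifold $\Sigma$ to be a multiple of the ground state, giving the desired profile.

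The main obstacle is precisely this error control — establishing the eigenfunction-alignment of the gap function — where a genuine argument is required (Harnack inequalities, rigidity of positive ancient solutions, or a direct energy estimate distributing $w$ over the spectral modes), and in particular one must rule out that the profile of $w$ concentrates near the vanishing neck rather than spreading over $\Sigma$. Subsidiary points to handle are the sheeting statement with a quantitatively controlled shrinking neck, the capacity estimate keeping the Dirichlet eigenvalue positive (clean for $\dim\Sigma\geq 2$, delicate for curves), and the degenerate borderline $\mu(\Sigma)=0$, in which the amplification vanishes — consistent with the multiplicity-$2$ examples of the present paper, whose limit $S^n\times\{0\}$ is exactly degenerate-stable, so the argument must use $\mu(\Sigma)>0$ quantitatively.
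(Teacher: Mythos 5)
The statement you are addressing is stated in the paper as a \emph{conjecture}: the authors give no proof of it (they motivate it by analogy with the Bamler--Kleiner resolution of the Multiplicity One Conjecture and with generic multiplicity one in min-max theory), so the only question is whether your argument is complete on its own terms, and as written it is not. The first genuine gap is structural: you assume that multiplicity-$k$ convergence upgrades, via White's local regularity theorem and ``Allard-type sheeting,'' to $k$ ordered normal graphs $u_1<\cdots<u_k$ over $\Sigma$ away from a neck set of shrinking diameter, with $\|u_i\|_{C^2}\to 0$. But those regularity tools require Gaussian density ratios close to $1$, i.e.\ multiplicity one --- exactly the hypothesis that fails here. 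For a limit of integer multiplicity $k\geq 2$ the convergence in the conjecture is a priori only in the varifold/measure-theoretic sense, and there is no general sheeting theorem producing smooth ordered graphs with decaying $C^2$ norms; establishing that structure (including the claim that the non-graphical region is a single shrinking neck) is itself a substantial missing step, not a routine consequence of standard regularity, yet your error bounds $|Q_i|\leq C\|u_i\|_{C^2}(\cdots)$ and $\varepsilon(t)\to 0$ depend on it.

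The second gap you flag yourself, and it is decisive: the contradiction requires $\bigl|\int_{\Omega'}E\,\psi\bigr|\leq\tfrac12\mu(\Omega')\,a(t)$, but $E$ contains $|\nabla^2 w|$, and there is no a priori reason that $\int_{\Omega'}(|\nabla w|+|\nabla^2 w|)\psi$ is controlled by $\int_{\Omega'}w\,\psi$ unless one already knows the gap function is asymptotically aligned with the ground state of the Jacobi operator --- which is essentially the conclusion being sought. The rescaling/ancient-solution rigidity argument is only sketched, and the scenario it must exclude (the profile of $w$ concentrating at the shrinking neck, where $\psi$ gives no control and derivative terms dominate) is precisely the kind of behavior that makes higher-multiplicity questions hard; note also that the removable-singularity and capacity arguments need quantitative control of the neck that the sheeting step has not supplied. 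So the exponential-growth mechanism is conditional on two unproved inputs, and the proposal should be regarded as a plausible program --- consistent with the heuristic behind the conjecture, and correctly identifying that strict instability $\mu(\Sigma)>0$ must be used quantitatively, since the paper's own examples have a degenerate-stable limit --- rather than a proof; the statement remains open.
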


Now we discuss the strategy of the proof. The main idea is similar to \cite{chen2023mean} in which the authors used an interpolation argument. Here we also constructed a family of initial data $(M^\delta)$, which are rotationally symmetry under the $SO(n)$ action on the sphere factor, and reflective symmetry along $S^n\times\{0\}$ and the equators of $S^n$. Moreover, when $\delta$ is small, the mean curvature flow starting from $M^\delta$ would generate a neck singularity at the north pole of $S^n\times\{0\}$, and when $\delta$ is large, the mean curvature flow starting from $M^\delta$ would shrink to a point on the equator of $S^n\times\{0\}$. Then there exists an $\eta$ in between such that the mean curvature flow starting from $M^\eta$ exists for all future time, namely it is an immortal flow.

It remains to show this immortal flow converges to $S^n\times\{0\}$ with multiplicity $2$. To do so, we need to use some barriers to control the immortal flow. In \cite{chen2023mean}, we used the planes, the catenoids, and the Angenent torus as barriers. In this paper, we find analogous barriers: the spherical minimal hypersurfaces, the spherical catenoids, and $\lambda$-Angenent torus. We will discuss these special solutions in the preliminary section.

\section{Preliminary}

\subsection{Metric on \texorpdfstring{$S^n\times [-1,1]$}{Lg} and rotationally symmetric mean curvature flows}
We equip $S^n$ with the standard metric, namely the induced metric on $S^n:=\{(x_1,\cdots,x_{n+1})|x_1^2+x_2^2+\cdots +x_{n+1}^2=1\}$ as a hypersurface in $\R^{n+1}$. Then we equip $S^n\times [-1,1]$ with the product metric, denoted by $g$. We can also view $S^n \times [-1,1]$ as the hypersurface in $\R^{n+1} \times \R$, equipped with the induced metric.

We consider the $SO(n)$ action rotating the hyperplane generated by the first $n$ coordinates of $\R^{n+1}\times\R$. This action can be induced to $S^n\times[-1,1]$ naturally. Any hypersurface in $S^n\times[-1,1]$ is called {\bf rotationally symmetric} if it is invariant under this $SO(n)$ action. After taking the quotient, $S^n\times[-1,1]$ becomes $[0,\pi]\times[-1,1]$, and we use $x$ and $y$ to denote the coordinates on $[0,\pi]$ and $[-1,1]$ respectively.

Given a rotationally symmetric hypersurface $\Gamma$ in $S^n\times[-1,1]$, there is an associated curve called {\bf profile curve} $\gamma:I\to [0,\pi]\times[-1,1]$, while $I$ can be an interval or $S^1$, such that $\Gamma$ is generated by rotating $\gamma$. That says, if we write $\gamma(s)=(x(s),y(s))$, then we can parameterize $\Gamma$ as follows: \begin{align*}
    &\Gamma=\{(\cos(x(s)),\sin(x(s))\cos(\varphi_2),\cdots,\sin(x(s))\sin(\varphi_2)\cdots\sin(\varphi_{n}),y(s))\in \R^{n+1}\times\R\},
    \\
    &\text{while }(s,\varphi_2,\cdots,\varphi_{n})\in I\times[0,\pi]\times\cdots\times[0,\pi]\times[0,2\pi].
\end{align*}
The area form of this parametrization is given by 
\[\sqrt{(x')^2+(y')^2}\sin^{n-1}(x(s))\sin^{n-2}(\varphi_2)\cdots \sin(\varphi_{n-1})dsd\varphi_2d\varphi_3\cdots d\varphi_{n}.\]
After taking integration and using Fubini's theorem, we see that the area of $\Gamma$, up to a constant, equals the length of the curve $\gamma\subset [0,\pi]\times[-1,1]$ under the metric
\begin{equation}
    g_{\text{rot}}=(\sin(x))^{2(n-1)}(dx^2+dy^2).
\end{equation}

We will be interested in mean curvature flows with initial data that are rotationally symmetric, reflexive symmetric with respect to the middle sphere $S^n \times \{0\}$, and reflexive symmetric with respect to the hypersurface $\{s = \frac{\pi}{2} \} \times [-1,1]$. We call a smooth hypersurface in $S^n \times [-1,1]$ with all above symmetry conditions as a {\bf desired hypersurface}. By the short-time existence and uniqueness of the mean curvature flow, we know the rotational symmetry and reflexive symmetry are preserved under the mean curvature flow. Consider a mean curvature flow of desired hypersurfaces $(M(t))_{t \in [0,T)}$ on $S^n \times [-1,1]$. For each hypersurface $M(t)$, after taking the quotient of the $SO(n)$ action and the two reflections, we get a curve $\gamma_t$ in the region $D:= [0, \frac{\pi}{2}] \times [0,1]$, and we name $\gamma_t$ as the {\bf section curve} of $M(t)$ (note this is the top left part of the profile curve). We say $\gamma_t$ is an {\bf ascending section curve} if it is the graph of an increasing function, and let $\mathcal{S}$ denote the set of function $f$ such that the graph of $f$ is an ascending section curve of a desired hypersurface in $S^n \times [-1,1]$.  

The reflexive symmetry implies that the curve $\gamma_t$ intersects the two sides $\{x=0\}$, $\{y = 0\}$ orthogonally. We name the intersection point of $\gamma_t$ with $\{y = 0\}$ as the {\bf head point}. This curve $\gamma_t$ can be expressed as the union of two graphs of smooth functions $u$ and $v$, and we call $u$ the {\bf vertical graph function} and call its graph as the {\bf vertical graph}; we call $v$ as the {\bf horizontal graph function} and call its graph as the {\bf horizontal graph}. See the picture below for an example.

\begin{figure}[htb]
    \centering
    \tikzset{global scale/.style={
    scale=#1,
    every node/.append style={scale=#1}
  }
}
    \begin{tikzpicture}[global scale = 0.7]
    \draw (0,0) -- (9.425,0) node[below] {$(\frac{\pi}{2},0)$};
    \draw (0,0) -- (0,6) node[left] {$(0,1)$};
    \draw (9.425,0) -- (9.425,6);
    \draw (0,6) -- (9.425,6);
    \node[below] at (5,-0.1) {$x$};
    \node[below] at (-0.3,3.2) {$y$};
    \node[below] at (-0.2,0) {$(0,0)$};
    \node[below] at (9.925,6.4) {$(\frac{\pi}{2},1)$};
    \node[below] at (2,5) {region $D$};
    \draw plot[smooth,tension=.6]
coordinates {(2.5,0) (2.5, 0.5) (2.6, 1) (2.7,1.3) (3,2) (4,3) (5,3.5) (6,3.7)(7,3.85) (8, 3.95) (9,4) (9.425,4)};
    \node[below left] at (2,1.7) {vertical};
    \node[below left] at (1.9,1.2) {graph};
    \node[below left] at (2.12,0.7) {$x = u(y)$};
    \node[below] at (7,4.7) {horizontal graph $y = v(x)$};
    \node[below] at (5.1,2.8) {section curve $\gamma_t$};    \fill(2.5,0)circle(1.5pt)node[below]{head point};
    \draw (2.5,0.1) coordinate (a) -- (2.5,0) coordinate (b) -- (2.51,0) coordinate (c);
    \tkzMarkRightAngle (a,b,c)
     \draw (9.325,4) coordinate (d) -- (9.425,4) coordinate (e) -- (9.425,3.9) coordinate (f);
    \tkzMarkRightAngle (d,e,f)
    \end{tikzpicture}
    \caption{Example of a section curve that we study.}
\end{figure}

Let $u_0$ and $v_0$ be the vertical graph function and the horizontal graph function of the initial curve $\gamma_0$. Suppose after reflecting and rotating $\gamma_0$ we get a hypersurface $M(0)$ in $S^n\times[-1,1]$. Then a family of hypersurfaces $\{M(t)\}_{t \in [0,T)}$ is said to be a mean curvature flow with initial condition $M(0)$ if the corresponding vertical graph function, horizontal graph function $u(\cdot, t)$, $v(\cdot, t)$ of $\gamma_t$ satisfy the following equations (see Appendix \ref{Appendix:curves}):

\begin{equation}
\label{mcf equation} 
\left\{
\begin{aligned}
&\frac{\partial u}{\partial t}=\frac{u_{yy}}{1+(u_y)^2}-(n-1)\frac{\cos(u)}{\sin(u)}, \\
&\frac{\partial v}{\partial t} = \frac{v_{xx}}{1+(v_x)^2} + (n-1)\frac{\cos x}{\sin x}v_x, \\
&u(\cdot,0) = u_0,\  v(\cdot,0) = v_0, \\
&u_y(0, \cdot) = 0, \ v_x(\frac{\pi}{2}, \cdot) = 0.
\end{aligned} 
\right.
\end{equation}

For simplicity, we will say $f(\cdot, t)$ is a solution to equation \eqref{mcf equation} if its corresponding vertical graph function and horizontal graph function solve this equation. We will call the first equation in \eqref{mcf equation} the vertical graph equation, and call the second equation in \eqref{mcf equation} the horizontal graph equation.

\subsection{Minimal hypersurfaces as barriers} \label{minisurface barrier} Suppose $\gamma^1$ and $\gamma^2$ are two curves in $D$. We say that $\gamma^1$ is \textbf{on top of} $\gamma^2$, if for all pairs $(c, y_1, y_2)$ such that $(c,y_1) \in \gamma^1, (c,y_2) \in \gamma^2$, it holds that  $y_1 \geq y_2$. Recall the comparison principle for mean curvature flow:

\begin{prop}[Comparison principle] \label{comparison}
    Suppose $N$ is a closed manifold, $(M^1(t))_{t\in[0,T]}$ and $(M^2(t))_{t\in[0,T]}$ are two mean curvature flows of smooth embedded hypersurfaces in $N$. If $M^1(0)$ does not intersect $M^2(0)$, then $M^1(t)$ does not intersect $M^2(t)$ for all $t\in[0,T]$.
\end{prop}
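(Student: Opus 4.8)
The plan is to prove this as a version of the classical avoidance principle, via a first-touching-time argument combined with the strong maximum principle. First I would argue by contradiction: suppose $M^1(t)\cap M^2(t)\neq\emptyset$ for some $t$. Since $N$ is closed and both flows are smooth on $[0,T]$, each $M^i(t)$ is compact and varies continuously in $t$, so $t\mapsto\dist(M^1(t),M^2(t))$ is a continuous function, positive at $t=0$, that vanishes somewhere; let $t_0>0$ be its first zero, and pick $p\in M^1(t_0)\cap M^2(t_0)$. Because the two hypersurfaces are disjoint for $t<t_0$, they cannot cross transversally at $(p,t_0)$ — a transversal intersection would persist for nearby times — so $M^1(t_0)$ and $M^2(t_0)$ are tangent at $p$, and near $p$ both are graphs over their common tangent hyperplane.

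Next I would pass to the graphical formulation: in suitable coordinates near $p$ and for $t\in[t_0-\tau,t_0]$, write $M^i(t)$ as the graph of a smooth function $u_i(\cdot,t)$ over a fixed small ball, so that each $u_i$ solves the graphical mean curvature flow equation in $N$, a quasilinear parabolic PDE $\pr_t u_i=a^{jk}(x,u_i,\nabla u_i)\,\pr_{jk}u_i+b(x,u_i,\nabla u_i)$ with positive-definite principal part (and with lower-order terms encoding the ambient geometry of $N$). Since the graphs are disjoint near $p$ for $t<t_0$, the difference $w:=u_1-u_2$ has a constant sign there, so after relabeling $w\ge0$ on the parabolic cylinder with $w(0,t_0)=0$. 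Subtracting the two equations and writing the difference of the nonlinearities as an integral along the segment joining the $2$-jets of $u_2$ and $u_1$, I would obtain a linear, uniformly parabolic equation $\pr_t w=\tilde a^{jk}\pr_{jk}w+\tilde b^j\pr_j w+\tilde c\,w$ for $w$ with bounded coefficients. The strong maximum principle for parabolic equations then forces $w\equiv0$ on the whole cylinder, so $M^1(t_0-\tau)$ and $M^2(t_0-\tau)$ coincide near $p$ and in particular intersect — contradicting that $t_0$ is the first touching time. This gives the claim.

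An alternative, essentially equivalent route is to work directly with $D(t):=\dist(M^1(t),M^2(t))$: at a pair of points realizing the minimum, the connecting minimizing geodesic meets both hypersurfaces orthogonally, and differentiating $D$ along the flow and using the second variation of arclength together with the sectional curvature bounds of the closed manifold $N$ gives $D'(t)\ge-C(N,T)\,D(t)$ (at points of differentiability, or in the barrier sense), hence by ODE comparison $D(t)\ge D(0)\,e^{-C(N,T)t}>0$.

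I expect the only genuinely delicate point in either approach to be ensuring \emph{uniform} parabolicity of the linearized operator — equivalently, uniform geometric bounds along the two flows — since this is what makes the strong maximum principle applicable and what controls the curvature term in the distance estimate. Here this is not really an obstacle: smoothness of the two flows on the compact interval $[0,T]$ in the closed manifold $N$ provides uniform $C^\infty$ bounds automatically, and in the body of the paper the comparison principle is only ever invoked for flows that are smooth on closed time intervals, which is exactly the setting covered above.
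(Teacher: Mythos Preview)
Your proposal is correct; both the first-touching-time argument with the strong maximum principle for the linearized graphical equation and the distance-function ODE estimate are standard, valid proofs of the avoidance principle. Note, however, that the paper does not actually prove this proposition: it is introduced with ``Recall the comparison principle for mean curvature flow'' and simply stated as a known fact, so there is no paper-side argument to compare your approach against.
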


Therefore, we can use barriers that are either on top of the flow or under the flow at the initial time to control the behavior of the flow at a later time. We will frequently use the following special solutions as barriers in this paper. 

Firstly, we consider the static solutions known as the minimal hypersurfaces. The simplest possible examples are the horizontal graphs $(x,v(x,t))$ given by $v(x,t)\equiv C$ for a constant $C\in[-1,1]$. Such a minimal hypersurface is just a section of $S^n\times[-1,1]$, and we call it a {\bf minimal sphere}. There is only one constant vertical graph $(u(y,t),y)$ given by $u(y,t)\equiv \pi/2$. It is the Cartesian product of the geodesic hypersphere in $S^n$ with $[-1,1]$, and we still call it a {\bf geodesic hypersphere}.

Another class of minimal hypersurfaces is the analog of the catenoids, and we call them {\bf spherical catenoids}. Given a parameter $C\in(0,\pi/2)$, we can solve the ODE 

\begin{equation}\label{eq:ODE for minimal}
    \frac{u_{yy}}{1+(u_y)^2}-(n-1)\frac{\cos(u)}{\sin(u)}=0,
\end{equation}
with initial condition $u(0)=C$ and $u_y(0)=0$. First, \eqref{eq:ODE for minimal} is invariant under $y\leftrightarrow-y$, so the solution must be an even function, and hence we only need to study the part of the solution where $y\geq 0$. It is straightforward to see that $u_{yy}(0)>0$, and hence $u_y>0$ in a neighborhood of $0$. Multiplying both sides of \eqref{eq:ODE for minimal} by $u_y$ yields the identity
\[
\frac{1}{2}(\log (1+(u_y)^2))_y-(n-1)(\log(\sin u))_y=0,
\]
and then we can conclude that there exists a constant $\lambda_C$ such that
\[
u_y^2=\lambda_C^2(\sin u)^{2(n-1)}-1.
\]
By plugging in $u(0)=C$ we obtain
\[
\lambda_C=(\sin C)^{-(n-1)}.
\]
Then we obtain that $u_y(y)>0$ for all $y\in(0,Y_C)$, where $u(Y_C)=\pi-C$. We may write the expression of $u_y$ as
\[
u_y=\sqrt{
\frac{(\sin u)^{2(n-1)}}{(\sin C)^{2(n-1)}}-1
}.
\]
We can also obtain the information of $u$ by looking at the inverse function $v$, which satisfies the equation
\begin{equation}\label{eq:ODE for minimal-horizontal}
    \frac{v_{xx}}{1+(v_x)^2}+(n-1)\frac{\cos(x)}{\sin(x)}v_x=0,
\end{equation}
where $v(C)=0$ and $v(\pi-C)=Y_C$, and $v_x\to \infty$ as $x\searrow C$ or $x\nearrow (\pi-C)$. dividing both sides of \eqref{eq:ODE for minimal-horizontal} by $v_x$ yields the identity
\[
\left[\log(v_x)-\frac12\log(1+(v_x)^2)+(n-1)\log\sin x\right]_x=0.
\]
Therefore, there exists $\bar\lambda_C>0$ such that
\[
\frac{v_x}{\sqrt{1+(v_x)^2}}=\bar\lambda_C(\sin x)^{-(n-1)}.
\]
Let $x\searrow C$, we have $\bar\lambda_C=(\sin C)^{-(n-1)}$. Then we obtain
\begin{equation}\label{eq:catenoid-v_x}
    v_x
=
\left(\left(\frac{\sin x}{\sin C}\right)^{2({n-1})}-1\right)^{-1/2}
\end{equation}

This implies that $v_x>0$ for all $x\in(C,\pi-C)$, and $v_x(x)+(v(\pi-x))_x=0$. Then we can show that $v(x)=2v(\pi/2)-v(\pi-x)$. It is also clear that $v(\pi/2)=Y_C/2$.
\begin{lem}\label{lem:catenoid midpoint asymptotics}
    We have the following asymptotic of $Y_C$:
    \begin{equation}\label{eq:catenoid midpoint asymptotics}
\lim_{C\to 0} Y_C=0 , \quad
               \limsup_{C\to \pi/2} Y_C \geq 2\sqrt{\frac{2}{n-1}}.
    \end{equation}
\end{lem}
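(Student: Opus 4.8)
The plan is to reduce both limits to the explicit integral formula for $Y_C$. Integrating \eqref{eq:catenoid-v_x} from $x = C$ to $x = \pi/2$ and using $v(C) = 0$, $v(\pi/2) = Y_C/2$ gives
\[
Y_C = 2\int_C^{\pi/2}\left(\left(\frac{\sin x}{\sin C}\right)^{2(n-1)} - 1\right)^{-1/2}dx,
\]
an integral with an integrable $(x-C)^{-1/2}$ singularity at the lower endpoint. Everything follows by estimating this integral, once as $C\to 0$ and once as $C\to\pi/2$.

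For $\lim_{C\to 0}Y_C = 0$ I would split the integral as $\int_C^{2C} + \int_{2C}^{\pi/2}$, with $C$ small. On $[C,2C]$ the convexity inequality $r^{2(n-1)}\ge 1 + 2(n-1)(r-1)$ applied to $r = \sin x/\sin C \ge 1$, together with $\sin C\le C$ and the elementary bound $\sin x - \sin C\ge c\,(x-C)$ for a universal $c>0$ valid on this short interval, shows the bracket is bounded below by a constant times $(x-C)/C$; comparing with $\int_C^{2C}(x-C)^{-1/2}\,dx$ then bounds $\int_C^{2C}$ by a constant times $C$. On $[2C,\pi/2]$ the bound $\sin x\ge \tfrac{2}{\pi}x$ gives $\sin x/\sin C\ge \tfrac{2x}{\pi C}\ge \tfrac{4}{\pi} > 1$, so the $-1$ can be absorbed into a fixed fraction of $(\sin x/\sin C)^{2(n-1)}$ and the integrand is bounded by a constant times $(C/x)^{n-1}$; integrating gives a bound that is a constant times $C$ when $n\ge 3$ and a constant times $C\log(1/C)$ when $n = 2$. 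All three contributions vanish as $C\to 0$.

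For $\limsup_{C\to\pi/2}Y_C\ge 2\sqrt{2/(n-1)}$ I would substitute $C = \tfrac{\pi}{2} - a$ and $x = \tfrac{\pi}{2} - s$ with $a\to 0^+$, so that $\sin x = \cos s$, $\sin C = \cos a$, and
\[
Y_C = 2\int_0^a\left(\left(\frac{\cos s}{\cos a}\right)^{2(n-1)} - 1\right)^{-1/2}ds.
\]
Then I bound the bracket from above: writing $r = \cos s/\cos a\in[1,1/\cos a]$, the mean value inequality gives $r^{2(n-1)} - 1\le 2(n-1)\,r^{2(n-1)-1}(r-1)\le \tfrac{2(n-1)}{(\cos a)^{2(n-1)}}(\cos s - \cos a)$, and the identity $\cos s - \cos a = 2\sin\tfrac{a+s}{2}\sin\tfrac{a-s}{2}$ with $\sin\theta\le\theta$ gives $\cos s - \cos a\le \tfrac12(a^2 - s^2)$. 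Hence the bracket is at most $(n-1)(a^2-s^2)/(\cos a)^{2(n-1)}$, and since $\int_0^a(a^2-s^2)^{-1/2}\,ds = \pi/2$,
\[
Y_C\ge \frac{2(\cos a)^{n-1}}{\sqrt{n-1}}\int_0^a\frac{ds}{\sqrt{a^2-s^2}} = \frac{\pi(\cos a)^{n-1}}{\sqrt{n-1}}\longrightarrow \frac{\pi}{\sqrt{n-1}}\quad\text{as }a\to 0^+.
\]
Since $\pi > 2\sqrt 2$, this gives $\liminf_{C\to\pi/2}Y_C\ge \pi/\sqrt{n-1} > 2\sqrt{2/(n-1)}$, which is stronger than the asserted $\limsup$ bound.

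The only genuinely delicate point is the endpoint analysis. For $C\to 0$ the $(x-C)^{-1/2}$ blow-up forces the piece over $[C,2C]$ to be handled by an explicit comparison integral rather than by a pointwise bound, and the tail $\int_{2C}^{\pi/2}$ must be treated separately in the cases $n = 2$ (logarithmic growth) and $n\ge 3$ (power-type). For $C\to\pi/2$ one has to make sure the upper bound on the bracket degenerates exactly like $a^2 - s^2$ near $s = a$, so that the resulting integral is the sharp value $\pi/2$; everything else is routine calculus.
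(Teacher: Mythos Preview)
Your argument is correct and proceeds from the same integral formula as the paper, but the estimation of the integral differs on both sides. For $C\to 0$ the paper does not split the domain; instead it reduces to the $n=2$ case via $r^{2(n-1)}-1\ge r^2-1$ and then uses the identity $\sin^2 z-\sin^2 C=\sin(z+C)\sin(z-C)$ to bound the whole integral by $\tfrac{2\sin C\sqrt{\pi-2C}}{\sqrt{\sin 2C}}$ in one stroke. Your near/far decomposition is more pedestrian but equally valid, and has the minor advantage of making the $n$-dependence explicit (the paper's reduction to $n=2$ throws that away). For $C\to\pi/2$ your change of variables to $a=\pi/2-C$, $s=\pi/2-x$ together with $\cos s-\cos a\le\tfrac12(a^2-s^2)$ reduces the problem to the exact arcsine integral $\int_0^a(a^2-s^2)^{-1/2}\,ds=\pi/2$ and yields the cleaner lower bound $\liminf_{C\to\pi/2}Y_C\ge\pi/\sqrt{n-1}$; the paper instead bounds $\sin z-\sin C\le\cos C\sin(z-C)$ and finishes with a L'H\^opital computation, obtaining only the stated $2\sqrt{2/(n-1)}$. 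Your route is both sharper (since $\pi>2\sqrt 2$) and gives a $\liminf$ rather than a $\limsup$, so for the purposes of the paper --- where only continuity of $C\mapsto Y_C$ and the intermediate value $Y_{C_1}=2/n$ are ultimately used --- your estimate would serve at least as well.
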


\begin{proof}
By \eqref{eq:catenoid-v_x} and $v(C)=0$, we have
    \[
    \begin{split}
       Y_C=& v(\pi-C)-v(C)
        =
        \int_{C}^{\pi-C}v_x(z)dz
        \\
        =&
       \int_C^{\pi-C}
        \left(\left(\frac{\sin z}{\sin C}\right)^{2(n-1)}-1\right)^{-1/2} dz = 2\int_C^{\frac{\pi}{2}}
        \left(\left(\frac{\sin z}{\sin C}\right)^{2(n-1)}-1\right)^{-1/2} dz.
    \end{split}
    \]

Using the fact $\sin^2 z - \sin^2 C = \sin(z+C)\sin(z-C)$, for $C \in (0,\frac{\pi}{6})$, we have
\begin{equation*}
\begin{aligned}
&\int_C^{\frac{\pi}{2}}
\left(\left(\frac{\sin z}{\sin C}\right)^{2(n-1)}-1\right)^{-1/2} dz \leq \int_C^{\frac{\pi}{2}}
\left(\left(\frac{\sin z}{\sin C}\right)^{2}-1\right)^{-1/2} dz \\
=&\sin C \int_C^{\frac{\pi}{2}} \frac{1}{\sqrt{\sin^2 z - \sin^2 C}}dz = \sin C \int_C^{\frac{\pi}{2}} \frac{1}{\sqrt{\sin(z+C)\sin(z-C)}}dz \\
\leq& \frac{\sin C}{\sqrt{\sin (2C)}} \int_C^{\frac{\pi}{2}} \frac{1}{\sqrt{\sin(z-C)}}dz \leq \frac{\sin C}{\sqrt{\sin (2C)}} \int_C^{\frac{\pi}{2}} \sqrt{\frac{2}{z-C}}dz = \frac{2\sin C \sqrt{\pi-2C}}{\sqrt{\sin (2C)}}.
\end{aligned}
\end{equation*}
By letting $C \to 0$, we see that $\lim\limits_{C\to 0} Y_C=0$. On the other hand,
\begin{equation*}
\begin{aligned}
&(\sin z)^{2(n-1)} - (\sin C)^{2(n-1)}=(\sin z - \sin C)(\sin z + \sin C) [(\sin z)^{2(n-2)} + \cdots (\sin C)^{2(n-2)}] \\
&\leq 2(n-1)(\sin z - \sin C) (\sin z)^{2n-3} \leq 2(n-1)(\sin z - \sin C).
\end{aligned}
\end{equation*}
Thus
\begin{equation*}
\begin{aligned}
&\int_C^{\frac{\pi}{2}}
\left(\left(\frac{\sin z}{\sin C}\right)^{2(n-1)}-1\right)^{-1/2} dz = \int_C^{\frac{\pi}{2}}
\sqrt{\frac{(\sin C)^{2(n-1)}}{(\sin z)^{2(n-1)} - (\sin C)^{2(n-1)}}} dz \\
&\geq \int_C^{\frac{\pi}{2}} \frac{(\sin C)^{n-1} }{\sqrt{2(n-1)(\sin z - \sin C)}}dz = \frac{(\sin C)^{n-1} }{\sqrt{2(n-1)}} \int_C^{\frac{\pi}{2}} \frac{1}{\sqrt{\sin z - \sin C}} dz.
\end{aligned}
\end{equation*}

Since $\sin z - \sin C = \sin C \cos (z-C) + \cos C \sin (z-C) - \sin C \leq \cos C \sin (z-C)$, thus
\begin{equation*}
\begin{aligned}
\limsup\limits_{C \to \frac{\pi}{2}} Y_C &\geq \sqrt{\frac{2}{n-1}}\lim\limits_{C \to \frac{\pi}{2}}\int_C^{\frac{\pi}{2}} \frac{1}{\sqrt{\sin z - \sin C}} dz \geq \sqrt{\frac{2}{n-1}}\lim\limits_{C \to \frac{\pi}{2}} \int_C^{\frac{\pi}{2}} \frac{1}{\sqrt{\cos C \sin(z-C)}} dz \\
&\geq \sqrt{\frac{2}{n-1}}\lim\limits_{C \to \frac{\pi}{2}} \frac{\int_0^{\frac{\pi}{2}-C} \frac{1}{\sqrt{\sin z}} dz}{\sqrt{\cos C}} = \sqrt{\frac{2}{n-1}}\lim\limits_{C \to \frac{\pi}{2}} \frac{1/\sqrt{\sin (\frac{\pi}{2}-C)}}{\sin C / 2\sqrt{\cos C}} = 2\sqrt{\frac{2}{n-1}}.
\end{aligned}
\end{equation*}

\end{proof}

\begin{rmk}
Using the fact that $\sin z - \sin C = 2 \sin (\frac{z-C}{2}) \cos (\frac{z+C}{2}) \geq 2\sin (\frac{z-C}{2}) \cos (\frac{\pi + 2C}{4})$, by a similar estimate, we can show that $\limsup\limits_{C \to \frac{\pi}{2}} Y_C \leq 4\sqrt{\frac{2}{n-1}}$.
\end{rmk}

In fact, the spherical catenoids are periodic in the $\R$ factor, and $Y_C$ is half of the period. See Figure \ref{fig2} for a picture.

\subsection{\texorpdfstring{$\lambda$}{Lg}-Angenent curves}
In his pioneer work \cite{Angenent92_Doughnut}, Angenent constructed a rotationally symmetric self-shrinking mean curvature flow in $\R^{n+1}$ that is topological $S^1\times S^{n-1}$ before it becomes singular. More precisely, Angenent constructed a closed embedded hypersurface $\Sigma_n\subset\R^{n+1}$ that is rotationally symmetric, such that $\{\sqrt{-t}\Sigma_n\}_{t<0}$ is a mean curvature flow.

It was observed by Huisken \cite{Huisken86} that if $\{\sqrt{-t}\Sigma\}_{t<0}$ is a mean curvature flow, then $\Sigma$ satisfies the equation $\vec{H}+x^\perp/2=0$, and such $\Sigma$ is called a {\bf shrinker}. Moreover, Huisken observed that a shrinker in $\R^{n+1}$ is the critical point of the Gaussian area functional $\mathcal{F}(\Sigma)=\int_\Sigma e^{-|x|^2/4}d\mathcal{H}^n(x)$. Therefore, Angenent reduced the existence of a rotationally symmetric shrinker to the existence of a closed embedded geodesic in the half-plane $\{(x,y)|x\geq 0\}$ equipped with the metric $x^{2(n-1)}e^{-(x^2+y^2)/2}(dx^2+dy^2)$.

Angenent used a shooting method to construct such a closed embedded geodesic. The idea is to examine the trajectory of the geodesics starting from $(r,0)$ with unit tangent vector $(0,1)$ as $r$ varies. Then an interpolation argument asserts the existence of a geodesic trajectory that will get back to some $(s,0)$ again with tangent vector $(0,-1)$. Reflecting this geodesic trajectory along the $x$-axis gives a desired closed embedded geodesic.

We would like to point out that if we replace $(n-1)$ by some $\lambda>0$, all the proofs in \cite{Angenent92_Doughnut} still work (See Appendix \ref{Angenent curve} for an explanation). We call a closed embedded geodesic in the half-plane $\{(x,y)|x\geq 0\}$ equipped with the metric $x^{2\lambda}e^{-(x^2+y^2)/2}(dx^2+dy^2)$ a {\bf $\lambda$-Angenent curve}. Such curves will be the barriers. In fact, when $n\geq 3$, we can just use the $(n-1)$-dimensional Angenent torus as the barrier. However, there is no $1$-dimensional Angenent torus in the plane, so in the case $n=2$ we need a $\lambda$-Angenent curve as the barrier for $\lambda\in(0,1)$.

\section{Main result}

Throughout this paper, to simplify the notation, we will use the names of the hypersurfaces (minimal spheres, geodesic hyperspheres, spherical catenoids, etc.) to denote their profile curves and section curves as well.

\subsection{Rotationally symmetric mean curvature flow}

In this section, we study the solutions to the equation \eqref{mcf equation}. 

We will introduce the following notations throughout this section. $(M(t))_{t \in [0,T)}$ is a rotationally symmetric mean curvature flow in $S^n \times [-1,1]$. The section curve of $M(t)$ is $\gamma_t$, and can be expressed as the graph of $f(\cdot, t)$. Therefore $f(\cdot, t)$ is a solution to the equation \eqref{mcf equation}. $u(\cdot, t)$, $v(\cdot, t)$ will denote the vertical graph function and the horizontal graph function of $f(\cdot, t)$.

\begin{prop} \label{consistent}
If the initial section curve $\gamma_0$ is an ascending section curve and $T > 0$ is the first singular time, then $\gamma_t$ is an ascending section curve for all $t \in (0, T)$.
\end{prop}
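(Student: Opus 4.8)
The plan is to show that the "ascending" property is preserved under the flow by a combination of the comparison principle and the structure of the section curve, together with a maximum principle / Sturmian argument applied to the derivative. First I would unpack what "ascending section curve" means at the level of the functions $u$ and $v$: the section curve $\gamma_t$ is ascending iff it is a graph over the $x$-axis of an increasing function, equivalently the vertical graph function $u(\cdot,t)$ is increasing (with $u_y(0,t)=0$ at the head point) and the horizontal graph function $v(\cdot,t)$ is increasing (with $v_x(\frac{\pi}{2},t)=0$), and the two graphs match up $C^1$-ly at the transition point. The key quantities to control are $u_y$ and $v_x$, which I want to show remain nonnegative.

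The main step is a differentiated maximum principle. Differentiating the vertical graph equation in \eqref{mcf equation} with respect to $y$, the function $w := u_y$ satisfies a linear parabolic equation of the form
\[
w_t = a(y,t)\, w_{yy} + b(y,t)\, w_y + c(y,t)\, w,
\]
where $a = (1+u_y^2)^{-1} > 0$ and $c$ involves $(n-1)\frac{d}{du}\left(\frac{\cos u}{\sin u}\right) = (n-1)\frac{1}{\sin^2 u} > 0$; similarly $v_x$ satisfies an analogous equation. At the head point we have the Neumann-type boundary condition $w(0,t) = u_y(0,t) = 0$ and at $x = \frac{\pi}{2}$ we have $v_x(\frac{\pi}{2},t) = 0$. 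Since $w$ vanishes on the relevant boundary portions and is nonnegative at $t=0$ (the initial curve is ascending), the maximum principle for linear parabolic equations — applied on $[0,T')$ for any $T' < T$ — forces $w \geq 0$ for all later times; strong maximum principle upgrades this to $w > 0$ in the interior, so the curve stays strictly ascending and no horizontal or vertical tangencies develop away from the two prescribed orthogonal intersections. One has to be slightly careful because the domain of $u$ (the range of $y$-values occupied by the vertical graph) is itself moving in time, and because the transition between the vertical and horizontal descriptions can shift; the clean way to handle this is to work with the curve $\gamma_t$ intrinsically, note that "ascending" is equivalent to the outward unit normal $\nu_t$ of $\gamma_t$ never being horizontal or vertical in the interior, and run a Sturmian/nodal argument on the angle function of $\gamma_t$ — or, more elementarily, to use the comparison principle against the static barriers (minimal spheres $v\equiv C$ and the geodesic hypersphere $u\equiv\frac{\pi}{2}$) to confine $\gamma_t$ to the rectangle $D$ and keep its endpoints on the correct edges, and then invoke the maximum principle on each graphical piece.

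The final step is to assemble these facts: confinement to $D$ with endpoints on $\{y=0\}$ and $\{x=\frac{\pi}{2}\}$ (from comparison with the constant barriers, which the flow cannot cross before the singular time $T$), plus strict monotonicity of the graph functions (from the differentiated maximum principle), plus the reflexive symmetry (which is preserved and guarantees the orthogonal intersections persist), together imply $\gamma_t \in \mathcal{S}$ for all $t \in (0,T)$.

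I expect the main obstacle to be the bookkeeping at the transition point where the section curve switches from being a vertical graph to a horizontal graph: one must verify that this change of chart does not create a spurious critical point of the graph function and that the maximum principle can be run uniformly across the whole curve. The cleanest resolution is probably to phrase the monotonicity in a chart-independent way — e.g., that $\langle \nu_t, \partial_x\rangle$ and $\langle \nu_t, \partial_y\rangle$ have fixed signs, each satisfying a scalar parabolic equation with the right boundary behavior — rather than juggling $u_y$ and $v_x$ separately. A secondary technical point is justifying the maximum principle on the time-dependent (and a priori only known to be smooth up to time $T$) domain; this is standard once one restricts to $[0,T']$ with $T' < T$ and uses that $\gamma_t$ is uniformly smooth there.
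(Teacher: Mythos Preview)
Your proposal is correct and takes essentially the same approach as the paper: differentiate the vertical and horizontal graph equations to obtain linear parabolic equations for $u_y$ and $v_x$, then apply the (strict) maximum principle to conclude both remain positive in the interior for $t\in(0,T)$. The paper's proof is in fact terser than yours---it simply writes down the differentiated equations \eqref{derivative pde} and invokes the strict maximum principle---so your additional discussion of the transition point and the time-dependent domain is more careful than what the paper spells out, but not a different method.
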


\begin{proof}

$\gamma_0$ is an ascending section curve means $u'(y)>0$, and $v'(x) > 0$ besides the boundary. Since $u(\cdot, t)$, $v(\cdot, t)$ are solutions to the equation \eqref{mcf equation}, we know that $u_y$ and
 $v_x$ satisfies the following equation
\begin{equation} \label{derivative pde}
\begin{aligned}
& \frac{\partial u_y}{\partial t} = \frac{(u_y)_{yy}}{1 + (u_y)^2} - 2\frac{u_y[(u_y)_y]^2}{(1+(u_y)^2)^2} + (n - 1) \frac{u_y}{\sin^2 u}, \\
& \frac{\partial v_x}{\partial t} = \frac{(v_x)_{xx}}{1+(v_x)^2} - 2\frac{v_x[(v_x)_x]^2}{(1+(v_x)^2)^2} - (n - 1) \frac{v_x}{\sin^2 x} + (n - 1) \frac{\cos x}{\sin x} (v_x)_x.
\end{aligned}
\end{equation}

By the strict maximum principle of the quasilinear equation, we claim that for $t > 0$,
\begin{equation} \label{derivative estimate}
\begin{aligned}
&u_y(y, t)>0,\ v_x(x, t) > 0.
\end{aligned} 
\end{equation}
    
\end{proof}

From now on we will assume that the initial section curve $\gamma_0$ is ascending. It follows from Proposition \ref{consistent} that $\gamma_t$ is an ascending section curve, thus the height of $\gamma_t$ (i.e. the maximum of $f(\cdot, t)$) is given by $f(\frac{\pi}{2},t)$. We prove in the next lemma that it is monotone.

\begin{lem} \label{height decreasing}
If $\gamma_0$ is an ascending section curve, then the height of $\gamma_t$ is strictly decreasing in $t$.
\end{lem}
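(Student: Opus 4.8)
The plan is to differentiate the height in time and show the derivative is strictly negative. Since $\gamma_0$ is ascending, Proposition~\ref{consistent} gives that $\gamma_t$ is ascending for $t\in(0,T)$, so, as already observed, its height equals $f(\frac{\pi}{2},t)=v(\frac{\pi}{2},t)$, the value at $x=\frac{\pi}{2}$ of the horizontal graph function of $\gamma_t$. Thus it suffices to show $\frac{d}{dt}v(\frac{\pi}{2},t)<0$ for every $t\in(0,T)$; strict monotonicity on all of $[0,T)$ then follows from the continuity of $t\mapsto v(\frac{\pi}{2},t)$ (if $0<t''<t'$ then the height at $0$ dominates the height at $t''$ by continuity, which in turn strictly dominates the height at $t'$).

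First I would evaluate the horizontal graph equation in \eqref{mcf equation} at $x=\frac{\pi}{2}$. Because $\cos\frac{\pi}{2}=0$ and the boundary condition in \eqref{mcf equation} forces $v_x(\frac{\pi}{2},t)=0$, the transport term vanishes and the factor $1+(v_x)^2$ equals $1$ there, so
\[
\frac{d}{dt}\,v\Big(\frac{\pi}{2},t\Big)=v_{xx}\Big(\frac{\pi}{2},t\Big).
\]
Hence everything reduces to proving $v_{xx}(\frac{\pi}{2},t)<0$ for $t>0$, i.e.\ that the section curve is strictly concave at its highest point.

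The key step is a Hopf boundary point argument applied to $w:=v_x$. On a one-sided neighborhood $\{\frac{\pi}{2}-r<x\le\frac{\pi}{2}\}$ the coefficients of the second equation in \eqref{derivative pde} are smooth and bounded (no singularity occurs there, since $\sin x$ stays close to $1$), so $w$ solves a uniformly parabolic equation $\partial_t w=a\,w_{xx}+b\,w_x+c\,w$ with zeroth-order coefficient $c=-(n-1)/\sin^2 x\le 0$. By Proposition~\ref{consistent}, $w>0$ in the interior for $t>0$, while the boundary condition gives $w(\frac{\pi}{2},t)=0$. Fix $t_0\in(0,T)$ and apply the parabolic Hopf lemma on the parabolic rectangle $(\frac{\pi}{2}-r,\frac{\pi}{2})\times(\frac{t_0}{2},t_0]$ with $r$ small: $w$ attains its minimum value $0$ over the closure at the lateral boundary point $(\frac{\pi}{2},t_0)$, and since this minimum value is $0$ and $c\le 0$, the Hopf lemma yields a strictly negative outward normal derivative there, that is $v_{xx}(\frac{\pi}{2},t_0)=w_x(\frac{\pi}{2},t_0)<0$. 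Combined with the previous display this gives $\frac{d}{dt}v(\frac{\pi}{2},t_0)<0$, and since $t_0\in(0,T)$ was arbitrary, the height is strictly decreasing.

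The only delicate point is making the Hopf lemma application rigorous: one must confirm the uniform parabolicity and boundedness of the coefficients of \eqref{derivative pde} on a neighborhood of $x=\frac{\pi}{2}$ (immediate, as noted), and choose the parabolic neighborhood so as to avoid the corner at $t=0$, where the strict positivity of $v_x$ provided by Proposition~\ref{consistent} degenerates. The remaining ingredients — evaluating the flow equation at $x=\frac{\pi}{2}$ and upgrading $\frac{d}{dt}v(\frac{\pi}{2},t)<0$ on $(0,T)$ to strict monotonicity on $[0,T)$ — are routine.
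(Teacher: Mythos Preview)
Your argument is correct but takes a different route from the paper. The paper uses a barrier argument: the horizontal line $y=f(\frac{\pi}{2},t_0)$ is a static minimal sphere lying on top of $\gamma_{t_0}$, so by the comparison principle (Proposition~\ref{comparison}) it remains on top of $\gamma_t$ for all $t>t_0$, giving monotonicity; strictness then follows from the strong maximum principle since an ascending $\gamma_0$ is not itself a horizontal line. Your approach instead differentiates the height directly, reduces to showing $v_{xx}(\frac{\pi}{2},t)<0$, and obtains this from the parabolic Hopf lemma applied to $w=v_x$ via \eqref{derivative pde}. The paper's argument is shorter and more geometric, exploiting the continuum of static horizontal solutions; your argument is more PDE-analytic but delivers the explicit identity $\frac{d}{dt}v(\frac{\pi}{2},t)=v_{xx}(\frac{\pi}{2},t)$, which could in principle feed into quantitative decay estimates.
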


\begin{proof}
The horizontal line $y = f(\frac{\pi}{2},t)$ is on top of the curve $\gamma_t$, and remains static under the mean curvature flow. By Proposition \ref{comparison}, for any $t < t'$,  $y = f(\frac{\pi}{2},t)$ is on top of the curve $\gamma_{t'}$, thus $f(\frac{\pi}{2}, t') \leq f(\frac{\pi}{2}, t)$. Moreover, if $f(\cdot, 0)$ is not a constant function, then the strong maximum principle yields the result.

\end{proof}

Although Lemma \ref{height decreasing} shows that the height of the section curve of the flow strictly decreases, we do not have a quantitative estimate of the decreasing rate. To obtain such a bound, we need to construct a new family of barriers.

Given $a \in (0, \frac{\pi}{2})$, Consider a smooth function $l_a \in C^{\infty}([0,\frac{\pi}{2}])$ such that 
\begin{equation*}
l_a(x) = 
\left\{
\begin{aligned}
&0 \qquad \text{for} \qquad 0 \leq x \leq \frac{a}{2}, \\
&1 \qquad \text{for} \qquad a \leq x \leq \frac{\pi}{2},
\end{aligned} 
\right.
\end{equation*}
and $l_a' (x) > 0$ for $x \in (\frac{a}{2}, a)$.

By similar arguments as in authors' previous paper \cite{chen2023mean}, there exists a smooth solution $L_a(x,t)$ to the horizontal graph equation with initial data $L_a(x, 0) = l_a(x)$, and boundary condition $\frac{\partial}{\partial x} L_a (0, t) = \frac{\partial}{\partial x} L_a (\frac{\pi}{2}, t) = 0$. We have the following lemmas which are based on the maximum principle and Sturmian theorem (see \cite[Lemma 3.4, 3.5]{chen2023mean}).

\begin{lem} \label{height barrier}

There exist a constant $\beta_a > 0$ and a time $T_a > 0$ such that $L_a(\frac{\pi}{2}, t) < 1 - \beta_a$ for all $t > T_a$.

\end{lem}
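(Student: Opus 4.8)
The plan is to obtain the uniform gap from two monotonicity properties of $L_a$: the spatial maximum $h(t):=\max_{x\in[0,\pi/2]}L_a(x,t)$ is non-increasing in $t$, while a strong maximum principle forces $h(t)<1$ for every $t>0$. Since $h$ is non-increasing, the value $h(1)<1$ then caps $h(t)$ for all $t\ge 1$, and $L_a(\pi/2,t)\le h(t)$ finishes the argument. This runs parallel to the corresponding lemmas in \cite{chen2023mean}.

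First I would record the a priori bounds. Constants solve the horizontal graph equation of \eqref{mcf equation}, so the maximum principle for that quasilinear equation with the Neumann conditions $\pr_x L_a(0,t)=\pr_x L_a(\pi/2,t)=0$ (at a spatial maximum of $L_a(\cdot,t)$ the right-hand side is $\le 0$) — equivalently, Proposition \ref{comparison} comparing $L_a$ with the static minimal spheres $S^n\times\{c\}$ after closing up $S^n\times[-1,1]$ — gives $0\le L_a(x,t)\le 1$ and shows $h(t)$ is non-increasing, with $h(0)=\max l_a=1$. Equivalently, $w:=1-L_a\ge 0$ and $m(t):=\min_x w(x,t)=1-h(t)$ is non-decreasing in $t$.

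Next comes the key point, that $h(1)<1$. Since the constant $1$ solves the horizontal graph equation, subtracting shows that $w$ solves the linear parabolic equation
\[
\pr_t w=\frac{w_{xx}}{1+(\pr_x L_a)^2}+(n-1)\frac{\cos x}{\sin x}\,w_x
\]
with Neumann conditions at $x=0,\pi/2$. We have $w\ge 0$ everywhere and $w(\cdot,0)=1-l_a\not\equiv 0$ (it equals $1$ on $[0,\tfrac a2]$). On $[\eps,\pi/2]$ for $0<\eps<\tfrac a2$ the coefficients are smooth and the equation is uniformly parabolic, with $w(\cdot,0)>0$ near $x=\eps$, so the strong maximum principle gives $w>0$ on $(\eps,\pi/2)\times(0,\infty)$, and the Hopf boundary lemma with the Neumann condition at $x=\pi/2$ upgrades this to $w(\pi/2,t)>0$; treating $x=0$ analogously (or via the genuinely smooth geometry at the north pole) yields $w(\cdot,t)>0$ on all of $[0,\pi/2]$ for $t>0$. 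By compactness of $[0,\pi/2]$, $\delta_a:=\min_x w(x,1)>0$, i.e. $h(1)=1-\delta_a<1$. Then for every $t\ge 1$ we have $1-L_a(\pi/2,t)=w(\pi/2,t)\ge m(t)\ge m(1)=\delta_a$, so the lemma holds with $T_a:=1$ and $\beta_a:=\delta_a/2$.

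The only real obstacle is the unbounded coefficient $\cos x/\sin x$ at $x=0$, where the strong maximum principle and Hopf lemma are not classical. I will handle this by localizing the maximum principle to $[\eps,\pi/2]$ — legitimate because $w\ge 0$ is already known on the whole interval, so the endpoint $x=\eps$ needs no imposed boundary condition — and by dealing with the north pole $x=0$ through the rotated hypersurface in $S^n\times[-1,1]$, where that point is an interior smooth point and the singular coefficient is a mere coordinate artifact. The remaining ingredients (the bound $L_a\le 1$ and the monotonicity of $h$) are routine comparison-principle arguments.
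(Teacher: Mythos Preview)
Your argument is correct and aligns with what the paper indicates. The paper does not give an explicit proof here; it merely points to \cite[Lemma 3.4]{chen2023mean} and says the result is ``based on the maximum principle and Sturmian theorem'' (the Sturmian ingredient being for the companion gradient lemma). Your route---comparison with the constant solution to get $h(t)$ non-increasing, then the strong maximum principle/Hopf lemma to force $h(1)<1$---is exactly this maximum-principle mechanism, and your handling of the coordinate singularity at $x=0$ via the smooth ambient geometry is the natural fix. One minor simplification you could note: since $(L_a)_x\ge 0$ is preserved (same argument as Proposition \ref{consistent}), the spatial maximum is always attained at $x=\pi/2$, so you only need the Hopf lemma there and can sidestep the $x=0$ endpoint entirely.
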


\begin{lem}
\label{vertical graph gradient upper bound}
Given $T>0$, there exists a continuous nonincreasing function $\sigma : (0, \frac{\pi}{4}] \to \R_{+}$ that only depends on the head point of the initial condition (i.e. $u(0,0)$) and $T$, such that 
\begin{align*}
0 < u_y(y,t) \leq e^{\sigma(\delta)/t}, \quad \delta = \min \{u(y,t), \frac{\pi}{2} - u(y,t)\},
\end{align*}
holds for all $0 < t < T$, $0 < y < f(\frac{\pi}{2},t)$.

\end{lem}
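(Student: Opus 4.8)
\emph{Proof proposal.} The statement is a one–sided interior (smoothing) gradient estimate for the vertical graph function, and I would prove it by a maximum principle argument of Ecker--Huisken type, combined with comparison against the static barriers of Section~\ref{minisurface barrier}; this is the analogue of the gradient estimate in \cite{chen2023mean}. By Proposition~\ref{consistent} we have $w:=u_y>0$ for $t>0$, and by \eqref{derivative pde},
\[
\partial_t w=\frac{w_{yy}}{1+w^2}-\frac{2w\,w_y^2}{(1+w^2)^2}+(n-1)\frac{w}{\sin^2 u}.
\]
Two features of this equation dictate the form of the estimate. First, the zeroth order term $(n-1)w/\sin^2u$ has the unfavorable sign and is singular as $u\to0$, so a bound can only hold on $\{u\ge\delta\}$; there, and on $\{u\le\tfrac\pi2\}$, both $(n-1)/\sin^2u$ and the coefficient $(n-1)\cot u$ of the $u$--equation are bounded by a constant depending only on $n$ and $\delta$. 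Second, by the reflection symmetry the section curve meets $\{x=\tfrac\pi2\}$ orthogonally, so $w\to\infty$ at the right endpoint $y=f(\tfrac\pi2,t)$ of the graph irrespective of the flow; this is exactly why the bound is allowed to blow up as $\delta=\min\{u,\tfrac\pi2-u\}\to0$. Finally, the initial gradient of a desired hypersurface is merely finite, not controlled by $u(0,0)$, so the estimate must be a genuine smoothing estimate — hence the factor $1/t$ in the exponent.

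The first step is a comparison lemma that produces the dependence on $u(0,0)$. By Proposition~\ref{comparison}, $\gamma_t$ remains strictly to the right of a fixed static barrier with head point $<u(0,0)$ — say a spherical catenoid $u_C$ with $C<u(0,0)$, or a suitable geodesic hypersphere — and strictly below $\{x=\tfrac\pi2\}$; hence $c_0(u(0,0))\le u(0,t)<\tfrac\pi2$ for all $t\in[0,T)$. A further comparison with spherical catenoids shows that, for each $\delta$, the two collars $\{\tfrac\delta2\le u\le\delta\}$ and $\{\tfrac\pi2-\delta\le u\le\tfrac\pi2\}$ have $y$--width at least $\rho_0=\rho_0(\delta,u(0,0))>0$, uniformly in $t$. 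Consequently, around any point $(y,t)$ with $u(y,t)\in[\delta,\tfrac\pi2-\delta]$ there is a $y$--interval of length at least $\min(\rho_0,y)$ that stays inside $\{\tfrac\delta2\le u<\tfrac\pi2\}$ and on which the $u$--equation has coefficients bounded in terms of $n$ and $\delta$ only.

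On such an interval I would apply the Ecker--Huisken interior gradient estimate, adapted to the one–dimensional, mildly non-autonomous equation satisfied by $u$ (whose coefficients are now controlled), together with the elementary bound $\operatorname{osc} u\le\tfrac\pi2$. This yields
\[
u_y(y,t)\;\le\;C_1(n)\,\exp\!\left(\frac{C_2(n,\delta)}{\min(\rho_0,y)^2\,t}\right),
\]
and since $y\le f(\tfrac\pi2,t)$ is bounded above by a constant depending only on $u(0,0)$ and $T$ (Lemma~\ref{height decreasing} and a catenoid comparison), the right side is $\le e^{\sigma(\delta)/t}$ for a suitable $\sigma(\delta)=\sigma(n,\delta,T,u(0,0))$. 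Since the regions $\{u\in[\delta,\tfrac\pi2-\delta]\}$ shrink as $\delta$ increases, $\sigma$ can be taken nonincreasing; and a point with $\min\{u,\tfrac\pi2-u\}=\delta_0$ lies in $\{u\in[\delta_0,\tfrac\pi2-\delta_0]\}$, which gives the pointwise bound as stated. (An alternative is to replace the Ecker--Huisken step by Sturmian intersection–counting against the family $\{u_C\}_{C\in(0,\pi/2)}$ of spherical catenoids, whose gradients $\sqrt{(\sin u/\sin C)^{2(n-1)}-1}$ provide explicit comparison profiles.)

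The step I expect to be the main obstacle is the quantitative collar estimate: showing that $\rho_0(\delta,u(0,0))$ can be chosen positive and uniform in $t$ — in particular the $y$--width of the collar near $x=\tfrac\pi2$, which a priori could degenerate as the flow becomes very steep there — and, relatedly, transplanting the Euclidean Ecker--Huisken estimate into the curved quotient metric $g_{\mathrm{rot}}=\sin^{2(n-1)}x\,(dx^2+dy^2)$ with constants depending only on $n,\delta,T,u(0,0)$. In the Sturmian formulation the corresponding difficulty is arranging the correct (bounded) initial intersection number with the catenoid family, uniformly in the relevant parameters.
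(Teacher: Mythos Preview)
The paper does not give a self-contained proof here: it cites \cite[Lemma~3.5]{chen2023mean} and says the argument rests on ``the maximum principle and Sturmian theorem.'' In other words, the paper's intended route is exactly your parenthetical \emph{alternative} --- intersection counting against (translates of) the spherical catenoids $u_C$, whose slope $u_y=\sqrt{(\sin u/\sin C)^{2(n-1)}-1}$ is an explicit function of the \emph{level} $u$ rather than of the position $y$, so no spatial localisation is needed. Your main proposal, an Ecker--Huisken interior estimate on a collar of controlled $y$--width, is a genuinely different mechanism.

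That main proposal, however, contains a concrete error. You claim that a spherical catenoid with neck $C<u(0,0)$ gives a static barrier forcing $u(0,t)\ge c_0(u(0,0))>0$ on $[0,T)$. This is false: one of the two singular modes in Proposition~\ref{singular time} is precisely $u(0,t)\to0$, and the lemma is invoked at exactly such a $T$ in Corollary~\ref{height 0 head 1}. A catenoid with $C<u(0,0)$ need not lie on one side of $\gamma_0$ at all --- if $Y_C/2<f_0(\tfrac\pi2)$ the two section curves already cross --- and the only geodesic hypersphere is $u\equiv\tfrac\pi2$, which is on the wrong side. Without a head-point lower bound your uniform collar width $\rho_0(\delta,u(0,0))$ has no source, and in any case the final step --- passing from $\exp\bigl(C_2/\min(\rho_0,y)^2\, t\bigr)$ to $e^{\sigma(\delta)/t}$ by citing that $y$ is bounded \emph{above} --- goes in the wrong direction: what is needed is a uniform \emph{lower} bound on $\min(\rho_0,y)$, which is precisely the collar estimate you yourself flag as unresolved. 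The Sturmian route, by contrast, compares slopes at a fixed level $u=c$ and bypasses the spatial-collar issue entirely.
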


Next, we examine the behavior of the head point when the singularity appears during the mean curvature flow. By Lemma \ref{height decreasing}, we know the limit of the height $h := \lim\limits_{t \to T} f(\frac{\pi}{2},t)$ exists. The gradient estimate in Lemma \ref{vertical graph gradient upper bound} implies that if the height of the function $f(\cdot, t)$ tends to $0$, then the head point must tend to the boundary $x = \frac{\pi}{2}$. 

\begin{coro} \label{height 0 head 1}

If $h = 0$, then $\lim\limits_{t \to T} u(0,t) = \frac{\pi}{2}$.

\end{coro}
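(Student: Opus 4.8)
The plan is to argue by contradiction using the gradient bound of Lemma \ref{vertical graph gradient upper bound}. Suppose $h = 0$ but $\lim_{t\to T} u(0,t) \neq \frac{\pi}{2}$; since the head point $u(0,t)$ lies in $[0,\frac{\pi}{2}]$ and, combining Proposition \ref{consistent} with the fact that $u(0,t)$ is the smallest $x$-value attained on $\gamma_t$, we expect $u(0,t)$ to be monotone (or at least to stay bounded away from $\frac{\pi}{2}$ along a subsequence $t_k \to T$). So pick $t_k \to T$ with $u(0,t_k) \leq \frac{\pi}{2} - 2\rho$ for some fixed $\rho > 0$. First I would fix a time $T' \in (T, \infty)$ (or just use $T' = T + 1$ if $T < \infty$; if $T = \infty$ the statement is about $t\to\infty$ and one applies the lemma on a fixed large window) so that Lemma \ref{vertical graph gradient upper bound} provides the function $\sigma$ and the bound $0 < u_y(y,t) \leq e^{\sigma(\delta)/t}$ with $\delta = \min\{u(y,t), \frac{\pi}{2} - u(y,t)\}$, valid on the window we care about.

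The key step is to integrate this gradient bound along the vertical graph at a fixed time $t_k$ close to $T$. On the portion of $\gamma_{t_k}$ where $u \in [\rho, \frac{\pi}{2} - \rho]$, we have $\delta \geq \rho$, hence (since $\sigma$ is nonincreasing) $u_y \leq e^{\sigma(\rho)/t_k}$, which is bounded by a constant $C_\rho$ independent of $k$ once $t_k$ is bounded below (say $t_k \geq T/2$). Since the head point satisfies $u(0,t_k) \leq \frac{\pi}{2} - 2\rho < \frac{\pi}{2} - \rho$, the vertical graph must traverse the full $x$-interval $[\frac{\pi}{2} - 2\rho, \frac{\pi}{2} - \rho]$ — of width $\rho$ — somewhere within this region, and there the slope $u_y$ is at most $C_\rho$. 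Therefore the height gained over that traverse is at least $\rho / C_\rho$, a positive constant independent of $k$. This forces $f(\frac{\pi}{2}, t_k) \geq \rho/C_\rho > 0$ for all $k$, and letting $k \to \infty$ contradicts $h = 0$.

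The main obstacle I anticipate is the bookkeeping at the boundary $x = 0$ and near $x = \frac{\pi}{2}$: the gradient bound degenerates as $\delta \to 0$, i.e. as $u \to 0$ or $u \to \frac{\pi}{2}$, so one must be careful to carry out the integration only on the compact middle range $u \in [\rho, \frac{\pi}{2}-\rho]$ and argue that the curve genuinely sweeps an $x$-interval of definite length there. This requires knowing that the head point is bounded away from $\frac{\pi}{2}$ (the contradiction hypothesis) \emph{and} that the top of $\gamma_{t_k}$ reaches $x = \frac{\pi}{2}$ with $y$-value $f(\frac{\pi}{2},t_k)$ — which holds since $\gamma_t$ is an ascending section curve by Proposition \ref{consistent} and meets $\{x = \frac{\pi}{2}\}$ orthogonally. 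A secondary subtlety is ensuring $t_k$ stays bounded below by a positive constant so that $e^{\sigma(\rho)/t_k}$ is uniformly bounded; this is automatic since $t_k \to T > 0$. Once these are handled the estimate is a one-line integration, so I do not expect further difficulty.
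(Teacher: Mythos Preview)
Your proposal is correct and follows essentially the same approach as the paper: argue by contradiction, extract a sequence $t_k \to T$ with $u(0,t_k)$ bounded away from $\frac{\pi}{2}$, then integrate the gradient bound of Lemma \ref{vertical graph gradient upper bound} over a fixed $x$-interval contained in $[\rho,\frac{\pi}{2}-\rho]$ to produce a uniform positive lower bound on the height $f(\frac{\pi}{2},t_k)$, contradicting $h=0$. The paper uses the interval $[\frac{\pi}{2}-\epsilon,\frac{\pi}{2}-\frac{\epsilon}{2}]$ where you use $[\frac{\pi}{2}-2\rho,\frac{\pi}{2}-\rho]$, and it applies the lemma directly with the singular time $T$ as the window (your discussion of an auxiliary $T'$ is unnecessary since $T$ itself serves as the upper time bound in the lemma), but these are cosmetic differences.
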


\begin{proof}

We prove this by contradiction. Suppose not, then there exists $0 < \e < \frac{1}{2}$ and an increasing sequence $\{t_i\}$ such that $\lim\limits_{i \to \infty} t_i = T$, $\lim\limits_{i \to \infty} u(0,t_i) < \frac{\pi}{2} - \e$.

Then $f(x,t_i)$ is well-defined on $[\frac{\pi}{2}-\e, \frac{\pi}{2} - \frac{\e}{2}]$. Let $a(t_i) = f(\frac{\pi}{2} -\e, t_i)$, $b(t_i) = f(\frac{\pi}{2} - \frac{\e}{2}, t_i)$, then by Lemma \ref{vertical graph gradient upper bound},
\begin{align*}
\frac{\e}{2} = u(b(t_i),t_i) - u(a(t_i), t_i) = \int_{a(t_i)}^{b(t_i)} u_y (y, t_i) dy \leq (b(t_i) - a(t_i)) e^{\sigma(\frac{\e}{2})/t_i}.
\end{align*}

Hence $f(\frac{\pi}{2},t_i) \geq b(t_i) \geq \frac{\e}{2} e^{-\sigma(\frac{\e}{2})/t_i} \geq \frac{\e}{2} e^{-\sigma(\frac{\e}{2})/T}$ for all $i$. Moreover $\lim\limits_{t_i\to T}f(\frac{\pi}{2},t_i)\geq \frac{\e}{2} e^{-\sigma(\frac{\e}{2})/T}>0$, which contradicts to $\lim\limits_{t \to T} f(\frac{\pi}{2},t) = 0$.

\end{proof}

On the other hand, if the limit of the height $h$ is not zero, we obtain an improved gradient estimate.

\begin{lem} \label{vertical graph gradient lower bound}
If the height $h> 0$, then for any $0 < a < h$, let $\lambda = \frac{\pi}{h - a}$, there exists a constant $\varepsilon > 0$ such that $u_y(y,t) \geq \varepsilon e^{-\lambda^2 t} \sin(\lambda (y-a))$ for all $y \in [a,h]$, $0 \leq t < T$. In addition, $u(a,t) \leq \frac{\pi}{2} - \frac{2\varepsilon}{\lambda} e^{-\lambda^2 T}$.
\end{lem}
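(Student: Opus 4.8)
The plan is to use the function $w(y,t) = u_y(y,t)$, which by \eqref{derivative pde} satisfies
\[
\frac{\partial w}{\partial t} = \frac{w_{yy}}{1+w^2} - \frac{2w\, w_y^2}{(1+w^2)^2} + (n-1)\frac{w}{\sin^2 u}.
\]
Since $u$ ranges in $(0,\pi/2)$ we have $\sin^2 u \le 1$, so the zeroth-order coefficient $(n-1)/\sin^2 u \ge n-1 > 0$; I will simply drop it, noting that it only helps. I would then compare $w$ from below with a subsolution built from $\sin(\lambda(y-a))$. On the interval $y \in [a,h]$ with $\lambda = \pi/(h-a)$, the function $\phi(y) = \sin(\lambda(y-a))$ vanishes at $y=a$ and $y=h$, is positive in between, and satisfies $\phi_{yy} = -\lambda^2 \phi$. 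The candidate lower barrier is $\underline{w}(y,t) = \varepsilon e^{-\lambda^2 t}\phi(y)$.

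The key steps, in order, are as follows. First I would verify $\underline w$ is a genuine subsolution of the $w$-equation on $[a,h]\times[0,T)$: plugging in, the reaction term is nonnegative, and $\partial_t \underline w - \frac{\underline w_{yy}}{1+\underline w^2} = -\lambda^2 \underline w + \frac{\lambda^2 \underline w}{1+\underline w^2} \le 0$ since $1+\underline w^2 \ge 1$; the gradient-squared term $-2\underline w \underline w_y^2/(1+\underline w^2)^2 \le 0$ is also of the favorable sign, so $\underline w$ is a subsolution. Second, I need to check the boundary and initial conditions so the comparison principle (strong maximum principle for this quasilinear equation, as already invoked in Proposition \ref{consistent}) applies. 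At $y=h$: $h$ is the limiting height, but at each fixed $t<T$ the section curve has height $f(\pi/2,t) \ge h$ by Lemma \ref{height decreasing}, and the vertical graph extends up to $y = f(\pi/2,t)$ where $u_y = +\infty$ (the curve meets $\{x=\pi/2\}$ orthogonally); in particular $w(h,t) \ge 0 = \underline w(h,t)$, and one can use an approximation by slightly smaller heights if needed. At $y=a$: $\underline w(a,t)=0 \le w(a,t)$ by \eqref{derivative estimate}. For $t=0$: since $\gamma_0$ is ascending, $w(y,0) = u_y(y,0) > 0$ on the compact set $[a,h]$ (or its interior, with the endpoint issue handled by shrinking the interval slightly), so there exists $\varepsilon > 0$ with $w(y,0) \ge \varepsilon \sin(\lambda(y-a)) = \underline w(y,0)$. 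Third, apply the comparison principle to conclude $w(y,t) \ge \underline w(y,t)$ on $[a,h]\times[0,T)$, which is the desired inequality.

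Finally, for the last assertion, integrate the bound: for any $t < T$,
\[
\frac{\pi}{2} - u(a,t) = u\!\left(f(\tfrac\pi2,t),t\right) - u(a,t) - \Big(\tfrac{\pi}{2} - u(f(\tfrac\pi2,t),t)\Big) \ge \int_a^{h} u_y(y,t)\,dy \ge \varepsilon e^{-\lambda^2 t}\int_a^h \sin(\lambda(y-a))\,dy,
\]
where I used $u(f(\pi/2,t),t) = \pi/2$ and monotonicity to restrict the integral to $[a,h] \subset [a, f(\pi/2,t)]$. Since $\int_a^h \sin(\lambda(y-a))\,dy = \frac{2}{\lambda}$ (as $\lambda(h-a) = \pi$) and $e^{-\lambda^2 t} \ge e^{-\lambda^2 T}$, we get $\frac\pi2 - u(a,t) \ge \frac{2\varepsilon}{\lambda}e^{-\lambda^2 T}$.

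The main obstacle I anticipate is the boundary behavior at $y = h$: since $h$ is only the $t\to T$ limit of the height and the domain of $u(\cdot,t)$ is the $t$-dependent interval $[0, f(\pi/2,t)]$, one must be careful that $[a,h]$ genuinely lies in the domain and that $u_y \ge 0$ there; the clean fix is to run the comparison on $[a, h']$ for $h' < h$ slightly less than $h$ (so that $h' < f(\pi/2,t)$ for all $t<T$ by Lemma \ref{height decreasing}), obtain the estimate with $\lambda' = \pi/(h'-a)$, and then let $h' \uparrow h$. Everything else is a routine application of the maximum principle already used earlier in this section.
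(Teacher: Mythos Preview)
Your subsolution check contains a sign error. For $\underline w$ to be a subsolution of the $w$-equation you need
\[
\underline w_t \;\le\; \frac{\underline w_{yy}}{1+\underline w^2} \;-\; \frac{2\underline w\,\underline w_y^{\,2}}{(1+\underline w^2)^2} \;+\; (n-1)\frac{\underline w}{\sin^2 u}.
\]
Since $\underline w\ge 0$ on $[a,h]$, the gradient-squared term on the right is $\le 0$; it therefore makes the right-hand side \emph{smaller}, which is \emph{unfavorable} for the inequality, contrary to what you assert. After you drop the reaction term as proposed, the required inequality reduces (upon dividing by $\underline w>0$) to $\dfrac{2\underline w_y^{\,2}}{(1+\underline w^2)^2}\le \dfrac{\lambda^2\underline w^2}{1+\underline w^2}$, which fails near $y=a$ and $y=h$ where $\underline w\to 0$ but $\underline w_y\not\to 0$.

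The gap is repairable: if you \emph{keep} the reaction term, it contributes at least $(n-1)\underline w$, while $\underline w_y^{\,2}\le \varepsilon^2\lambda^2$; shrinking $\varepsilon$ so that $2\varepsilon^2\lambda^2\le n-1$ then makes $\underline w$ a genuine subsolution. The paper avoids this nuisance altogether by passing to $\theta=\arctan u_y$, for which the troublesome gradient term disappears: one gets the \emph{linear} differential inequality $\theta_t-\dfrac{\theta_{yy}}{1+(u_y)^2}>0$, while the same barrier $\varphi=\varepsilon e^{-\lambda^2 t}\sin(\lambda(y-a))$ satisfies $\varphi_t-\dfrac{\varphi_{yy}}{1+(u_y)^2}=\varphi_{yy}\dfrac{(u_y)^2}{1+(u_y)^2}\le 0$, so $\theta\ge\varphi$ by the linear maximum principle and hence $u_y=\tan\theta\ge\theta\ge\varphi$. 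Your treatment of the boundary and initial comparisons and the final integration are otherwise in line with the paper.
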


\begin{proof}

For $0 \leq t < T$, $u(y,t)$ is well-defined on $[0,h]$ and $u_y(y,t) > 0$ for $y \in (0,h]$. We know $u$ satisfies the vertical graph equation
\begin{align*}
u_t = \frac{u_{yy}}{1+(u_y)^2} - \frac{\cos u}{\sin u}.
\end{align*}

Define $\theta (y,t) = \arctan u_y(y,t)$, then $\theta(y,t) \in (0, \frac{\pi}{2})$ for $y \in [a,h]$, and
\begin{equation*}
\begin{aligned}
& u_t = \theta_y - \frac{\cos u}{\sin u}, \qquad  \theta_t = \frac{1}{1 + (u_y)^2} (u_y)_t = \frac{1}{1 + (u_y)^2} (u_t)_y = \frac{1}{1 + (u_y)^2} \left(\theta_{yy} + \frac{u_y}{\sin^2 u}\right).
\end{aligned}
\end{equation*}

Hence $\theta_t - \frac{\theta_{yy}}{1 + (u_y)^2} > 0$ for $y \in [a,h]$. Since $\theta(y,0) > 0$ for all $y \in [a,h]$, let $\varepsilon = \min_{y \in [a,h]} \theta(y,0) > 0$, $\varphi(y,t) = \varepsilon e^{-\lambda^2 t} \sin (\lambda(y-a))$. Then $\varphi_t = \varphi_{yy} \leq 0$ on $[a,h]$, thus
\begin{align*}
\varphi_t - \frac{\varphi_{yy}}{1+(u_y)^2} = \varphi_{yy} \frac{(u_y)^2}{1+(u_y)^2} \leq 0 < \theta_t - \frac{\theta_{yy}}{1+(u_y)^2}.
\end{align*}

Therefore $\phi$ is a subsolution, and $\theta$ is a supersolution of the PDE $f_t - \frac{f_{yy}}{1+(u_y^2)} = 0$ on the interval $[a,h]$. Initially we have $\varphi (y, 0) \leq \varepsilon \leq \theta(y, 0)$, and at the two endpoints of the interval $[a,h]$, $\varphi(a, t) = 0 < \theta(a,t)$, $\varphi(h,t) = 0 < \theta(h, t)$. By the classical maximum principle, we conclude $\theta(y, t) \geq \varphi (y,t)$ for all $a \leq y \leq h, 0 \leq t < T$. Therefore
\begin{equation*}
\begin{aligned}
& u_y(y,t) = \tan \theta(y,t) \geq \theta(y,t) \geq \varphi(y,t) = \varepsilon e^{-\lambda^2 t} \sin(\lambda (y-a)), \\
& u(h,t) - u(a,t) = \int_a^h u_y(y,t) dy \geq \int_a^h \varepsilon e^{-\lambda^2 t} \sin(\lambda (y-a)) dy = \frac{2}{\lambda} e^{-\lambda^2 t}.
\end{aligned}
\end{equation*}
This implies $u(a,t) \leq \frac{\pi}{2} - \frac{2\varepsilon}{\lambda} e^{-\lambda^2 T}$.

\end{proof}

Now we are ready to describe the possible singular behaviors of the mean curvature flow with a desired hypersurface as the initial condition. Recall that we assume the initial section curve $\gamma_0$ is ascending. In \cite{chen2023mean}, we used the Evans-Spruck estimates (\cite[Corollary 5.3]{EvansSpruck92_III}, also see \cite[Page 303]{AAG}) of the gradient of the graph function of mean curvature flow in $\R^{n+1}$. We need a generalization of this graph estimate to $S^n\times\R$ in \cite{BorisenkoMiquel12_MCF-GraphWarped}.

In the following, for $r\in(0,\pi/2)$, we use $B_r$ to denote a given ball of radius $r$ in $S^n$. 

\begin{lem} \label{neighgradestimate}
    Given $r>0$, there exists $\tau=\tau(r)$ with the following significance: Suppose $f:B_r(p)\times [0,\tau]\to\R$ is a function such that the graph of $f(\cdot,t)$ is a mean curvature flow in $B_r\times\R$, then for $t \in [0,\tau]$,
    \begin{equation}
        |\nabla f(p,t)|\leq C(\tau)+C(\tau)|\nabla f(p,0)|.
    \end{equation}
\end{lem}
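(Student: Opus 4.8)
The plan is to reduce the statement to an interior gradient estimate for graphical mean curvature flow in the warped-product ambient space $B_r\times\R\subset S^n\times\R$, and then invoke the estimate of Borisenko--Miquel \cite{BorisenkoMiquel12_MCF-GraphWarped}. The point of introducing a small radius $r<\pi/2$ is that on $B_r$ the geometry of $S^n$ is uniformly bounded (sectional curvature $1$, injectivity radius bounded below away from the antipodal locus), so the ambient manifold $B_r\times\R$ has bounded geometry on the time scale we care about, and the cited local gradient estimate applies with constants depending only on $r$.

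First I would recall the statement of \cite[Theorem/Corollary]{BorisenkoMiquel12_MCF-GraphWarped}: for a mean curvature flow that remains a graph over a fixed domain inside a warped product $N\times_\phi\R$ with controlled warping function and controlled geometry of $N$, the gradient function $w=\sqrt{1+|\nabla f|^2}$ (equivalently, the reciprocal of the height of the downward unit normal) satisfies a parabolic differential inequality of the form $\partial_t w\le \Delta_{M(t)} w + (\text{bounded})\,w$, which by the maximum principle propagates a bound from the parabolic boundary. To turn this into an honest interior estimate at the center point $p$ I would combine this with a cutoff argument: choose a cutoff $\eta$ supported in $B_r$ and equal to $1$ on $B_{r/2}$, consider $\eta^2 w$ (or $\log w$), and run the standard Ecker--Huisken-type computation. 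This yields $|\nabla f(p,t)|\le C(\tau)\bigl(1+\sup_{B_r}|\nabla f(\cdot,0)|\bigr)$ for $t\le\tau$, where $\tau=\tau(r)$ is chosen small enough that the flow stays a graph over $B_{r/2}$ for $t\in[0,\tau]$ and the accumulated exponential factor from the zeroth-order term stays bounded.

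There is, however, a subtlety I should address: the statement as written bounds $|\nabla f(p,t)|$ in terms of $|\nabla f(p,0)|$ \emph{at the single point $p$}, not in terms of the supremum of the initial gradient over the ball. This is the version one gets from the Evans--Spruck local estimate (and its warped-product analogue), where the right-hand side genuinely involves only the gradient at the center of the ball. To get this sharper form I would localize more carefully: the Evans--Spruck / Borisenko--Miquel estimate controls $w(p,t)$ in terms of $w(p,0)$ and the ambient geometry on $B_r$, using that $B_{r}$ is large enough to ``see'' the boundary behavior; the mechanism is that one bounds the sup of $w$ over a smaller ball by $w$ at the center plus a geometric constant, using the graph property and the a priori monotonicity of the flow inward. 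I would cite the precise statement from \cite{BorisenkoMiquel12_MCF-GraphWarped} for this and simply apply it, noting that $S^n$ has constant curvature so the hypotheses (bounds on the curvature of $B_r$ and on the geometry of the fibre) are automatic.

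The main obstacle I anticipate is bookkeeping the dependence of $\tau$ and $C(\tau)$ only on $r$: one must verify that as long as $r<\pi/2$ is fixed, the warping function $\sin(x)$ (in the coordinates of the excerpt) and its derivatives are bounded above and below on $B_r$ by constants depending only on $r$, so that the constants in the Borisenko--Miquel inequality and in the cutoff computation depend only on $r$ and not on the particular ball $B_r$ chosen (all such balls being isometric) nor on $f$. Once that is checked, the estimate follows directly; the remaining work is purely a matter of quoting the cited theorem with the correct hypotheses verified, so I would keep the proof short and point to \cite{BorisenkoMiquel12_MCF-GraphWarped, EvansSpruck92_III}.
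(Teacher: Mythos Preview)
Your approach is essentially the same as the paper's: both invoke the interior gradient estimate of Borisenko--Miquel \cite{BorisenkoMiquel12_MCF-GraphWarped} for graphical mean curvature flow in a warped product, exploiting that on $B_r\subset S^n$ with $r<\pi/2$ the geometry is uniformly controlled. The paper is slightly more concrete: rather than rerunning an Ecker--Huisken cutoff computation, it extracts directly from the proof of \cite[Theorem~7]{BorisenkoMiquel12_MCF-GraphWarped} the explicit weighted inequality
\[
\left(\frac{c_\mu(r)}{-\mu}-\frac{c_\mu(\dist_p(x))}{-\mu}e^{-\mu t}\right)^2\sqrt{1+|\nabla f(x,t)|^2}
\le \left(\frac{c_\mu(r)}{-\mu}\right)^2 e^{-(n-1)\mu t}\sup_{B_r}\sqrt{1+|\nabla f(\cdot,0)|^2},
\]
where $c_\mu(\rho)=\cosh(\sqrt{-\mu}\rho)$ for $\mu<0$, and then simply chooses $\mu$ so that $c_\mu(r)>e^{-\mu\tau}$ for $\tau$ small, which makes the left-hand side weight positive at $x=p$.

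Your second paragraph about the ``subtlety'' (pointwise $|\nabla f(p,0)|$ versus $\sup_{B_r}|\nabla f(\cdot,0)|$) is a red herring. The paper's own proof in fact bounds $|\nabla f(p,t)|$ by the \emph{supremum} of the initial gradient over $B_r$, not by the value at the single point $p$; the lemma's statement is a mild abuse which is harmless in the only place it is used (Proposition~\ref{singular time}), where the initial gradient is uniformly bounded on the relevant ball anyway. So you should drop the attempt to justify the sharper pointwise form---it is neither needed nor what the paper actually proves---and your first paragraph already contains the full argument.
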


\begin{proof}
The proof can be found in \cite{BorisenkoMiquel12_MCF-GraphWarped}. In the following, $\dist_p$ is the distance function on $S^n$ from the point $p\in S^n$, and for $\mu<0$, $c_\mu(\rho)=\cosh(\sqrt{-\mu}\rho)$. Using the proof of \cite[Theorem 7]{BorisenkoMiquel12_MCF-GraphWarped}, we can obtain that for any $\mu<0$,
\begin{equation}
\begin{aligned}
&\left(\frac{c_\mu(r)}{-\mu}-\frac{c_\mu(\dist_p(x))}{-\mu}e^{-\mu t}\right)^2\sqrt{1+|\nabla f(x,t)|^2} \\
\leq& \left(\frac{c_\mu(r)}{-\mu}\right)^2 
e^{-(n-1)\mu t}
\sup_{\dist_p(y)\leq r}\sqrt{1+|\nabla f(y,0)|^2}.
\end{aligned}
\end{equation}
Then when $\tau$ is sufficiently small, we can choose appropriate $\mu<0$ such that $c_\mu(r)>e^{-\mu \tau}$ to derive a desired bound for $|\nabla f(p,\tau)|$.
    
\end{proof}

\begin{prop} \label{singular time}
The flow first becomes singular at time $T$ if and only if $\lim\limits_{t \to T} u(0,t) = 0$ or $\lim\limits_{t \to T} u(0,t) = \frac{\pi}{2}$. In addition, if such $T$ doesn't exist, then the mean curvature flow exists for all future time.
\end{prop}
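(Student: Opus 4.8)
The plan is to characterize the first singular time by tracking the section curve $\gamma_t$, which by Proposition~\ref{consistent} remains ascending for $t\in(0,T)$, and is therefore described completely by its head point $u(0,t)\in(0,\pi/2)$ and its height $f(\tfrac{\pi}{2},t)$. The strategy is to show: (1) if the flow stays smooth up to $T$, then neither $u(0,t)\to 0$ nor $u(0,t)\to\pi/2$ can happen, giving one direction by contraposition; and (2) if the head point stays bounded away from both $0$ and $\pi/2$ for all $t<T$, then one has uniform local curvature/gradient control forcing smooth extension past $T$, so $T$ cannot be singular — this gives the reverse direction, and the last sentence of the statement is just the case $T=\infty$.

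For direction (1): suppose the flow is smooth on $[0,T]$ with $T$ finite. By compactness the hypersurfaces $M(t)$ converge smoothly to a smooth embedded hypersurface $M(T)$, whose section curve $\gamma_T$ is again ascending (or degenerates to a point). First, $u(0,t)\to 0$ would mean the head point runs into $\{x=0\}$; but $\{x=0\}$ corresponds to the ``north pole circle'' $S^n\times\{y_0\}$ shrinking — concretely, one can place a small geodesic hypersphere (the constant vertical graph $u\equiv\pi/2$ is too far, so instead use a spherical catenoid with small parameter $C$, whose neck $Y_C\to 0$ by Lemma~\ref{lem:catenoid midpoint asymptotics}) as a barrier under the flow near the axis; by the comparison principle (Proposition~\ref{comparison}) the flow cannot cross it, and tracking this catenoid barrier's own finite extinction shows the flow must already be singular, contradiction. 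Symmetrically, $u(0,t)\to\pi/2$ with the flow smooth is ruled out because the head point reaching $\{x=\pi/2\}$ while the curve stays a smooth ascending graph forces $f(\tfrac{\pi}{2},t)\to 0$ as well (the whole section curve is squeezed into the corner), i.e. the hypersurface shrinks to the point $(\pi/2,0)$, which is a genuine singularity — again a contradiction with smoothness on the closed interval $[0,T]$. The monotonicity from Lemma~\ref{height decreasing} guarantees the height limit $h$ exists, and Corollary~\ref{height 0 head 1} ties $h=0$ to $u(0,t)\to\pi/2$, so these two exhaust the singular scenarios.

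For direction (2): assume $u(0,t)$ stays in $[\delta_0,\pi/2-\delta_0]$ for all $t<T$ (with $T$ possibly $=\infty$, but here take $T$ finite). Then Lemma~\ref{vertical graph gradient lower bound} (applied with the positive height $h>0$, which holds by Corollary~\ref{height 0 head 1} since $u(0,t)\not\to\pi/2$) combined with Lemma~\ref{vertical graph gradient upper bound} gives two-sided control $c\le u_y(y,t)\le C$ on compact $y$-intervals bounded away from the endpoints, uniformly up to $T$; in particular the section curve stays a uniformly smooth ascending graph, bounded away from the corners $\{x=0\}$ and $\{x=\pi/2\}$ and from $\{y=0\}$. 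Thus $M(t)$ stays in a region where it is locally a graph over a fixed ball $B_r\subset S^n$ with uniformly bounded gradient, so Lemma~\ref{neighgradestimate} upgrades this to uniform gradient bounds on all of $M(t)$ near time $T$; standard quasilinear parabolic estimates (Schauder, bootstrap) then yield uniform $C^\infty$ bounds, so $M(t)\to M(T)$ smoothly and the flow extends past $T$. Hence $T$ was not the first singular time — contradiction — unless $T=\infty$, which is exactly the final assertion.

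The main obstacle is direction (1) in the case $u(0,t)\to\pi/2$: one must argue that the head point reaching the far side of the quotient region genuinely produces a singularity rather than some spurious coordinate degeneracy. The cleanest way is to rule it out together with a quantitative lower bound on $f(\tfrac{\pi}{2},t)$: if the head point were close to $\pi/2$ but the flow smooth, the ascending graph on $[u(0,t),\pi/2]$ would have to rise from $0$ to the height $h$ over a tiny $x$-interval, forcing $|\nabla f|\to\infty$ somewhere — contradicting Lemma~\ref{neighgradestimate}. Making this precise (and choosing the catenoid/geodesic-hypersphere barriers for the $u(0,t)\to 0$ case with the right extinction comparison) is where the real work lies; the extension statement in direction (2) is comparatively routine given the gradient lemmas already in hand.
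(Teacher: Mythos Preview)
Your proposal reverses the emphasis of the paper and, in doing so, introduces a genuine error in direction (1) while leaving a real gap in direction (2).

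For direction (1): in the paper this is essentially a one-line observation. If $u(0,t)\to 0$, the mean curvature at the head point contains the term $-(n-1)\cos(u)/\sin(u)$, which blows up; if $u(0,t)\to\pi/2$, the ascending section curve collapses to a point on the equator. That is the entire argument. Your barrier sketch for the $u(0,t)\to 0$ case is incorrect: spherical catenoids are \emph{minimal} hypersurfaces and hence static under mean curvature flow --- they do not have ``finite extinction,'' so you cannot use them to force a singularity in the way you describe. The $u(0,t)\to\pi/2$ case likewise needs no gradient-blowup argument; the curve simply disappears.

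For direction (2): you correctly identify that one should assume the head point stays in $[\delta_0,\pi/2-\delta_0]$ and derive uniform estimates, but you miss the structural decomposition that actually makes this work. The paper does \emph{not} obtain global two-sided bounds on $u_y$; instead it splits the section curve into two regimes. On the vertical-graph side $[0,a]$ (for $a<h$), Lemma~\ref{vertical graph gradient upper bound} bounds $u_y$ from above, and Lemma~\ref{vertical graph gradient lower bound} is used only to ensure $u(a,t)\le \pi/2-\epsilon$, so that the vertical graph stays in the good region --- the lower bound on $u_y$ itself is not what drives the argument. This localizes any possible singularity to the single point $(\pi/2,h)$. There one switches charts to the horizontal graph $v(x,t)$, extends by reflection across $x=\pi/2$, and applies Lemma~\ref{neighgradestimate} on a fixed ball to bound $\nabla v$ uniformly up to $T$. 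Your proposal treats the whole curve as a single graph with uniform gradient control, which is exactly what fails near $x=\pi/2$ (where $u_y\to\infty$) and is why the chart switch is essential.
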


\begin{proof}

As shown in equation \eqref{mcf equation}, the mean curvature of the hypersurfaces near the head point is given by $\frac{u_{yy}}{1+(u_y)^2}-(n-1)\frac{\cos(u)}{\sin(u)}$, thus a singularity appears when $\lim\limits_{t \to T} u(0,t) = 0$. If $\lim\limits_{t \to T} u(0,t) = \frac{\pi}{2}$, then the section curve collapses, and singularity appears at the geodesic hypersphere. Now we assume neither of the above happens, and we want to show that $T$ is not a singular time. 

We prove by contradiction and assume that a singularity appears at time $T$. By Corollary \ref{height 0 head 1}, $h > 0$. We claim that there exists $\e_1 > 0$ such that $u(0,t) > \e_1$ for all $0 \leq t < T$. Otherwise there exists a sequence $\{t_i\}$ in $[0,T)$ such that $\lim\limits_{i \to \infty} u(0,t_i) = 0$. Up to extracting from a subsequence, we can assume $t_i$ converges to $T' \in [0, T]$. Since $u(0, T) \neq 0$, $T' < T$, and a singularity appears at the origin at time $T'$, which is a contradiction.

By a similar argument, we can also assume that $u(0,t) < \frac{\pi}{2} -\e_1$ for all $0 \leq t < T$.

For any $0 < a < h$, by Lemma \ref{vertical graph gradient lower bound}, we know $u(a, t) < \frac{\pi}{2} - \frac{2\varepsilon}{\lambda} e^{-\lambda^2 T}$. Let $\e = \min \{\e_1, \frac{2\varepsilon}{\lambda} e^{-\lambda^2 T}\}$, we know $\e < u(0,t) < u(a,t) < \frac{\pi}{2} - \e$. 

We claim that $u(y,t)$ is smooth at $(y,t) \in [0, a] \times [0,T]$. This follows from $u_y(0,t) = 0$, a priori estimate for $u_y$ in Lemma \ref{vertical graph gradient upper bound}, and hence (see \cite{ladyzhenskaia1968linear}) for all higher derivatives of $u$ in the interior. Therefore the singularity can only appear on the boundary, i.e. at $(\frac{\pi}{2},h)$.

Now consider the horizontal graph function $v(x,t)$, which is defined for $\frac{\pi}{2} - \e_1 \leq x \leq \frac{\pi}{2}$, $0 < t < T$, and we can extend this function smoothly on $x$ to $[\frac{\pi}{2}, \frac{\pi}{2} + \e_1]$ by the symmetry. This function is uniformly bounded by the height of the initial condition, and it is a solution of the horizontal graph equation. Let $\tau = \tau(\frac{\e_1}{2})$, for any $p \in (\frac{\pi - \e_1}{2}, \frac{\pi + \e_1}{2})$, consider the flow of $v_{x,t}$ on $B_{\frac{\e_1}{2}}(p) \times [T-\tau, T)$. By Lemma \ref{neighgradestimate}, $\nabla v$, as well as all higher space derivatives of $v$, are uniformly bounded on the region $\{(x,t): \frac{\pi - \e_1}{2} \leq x \leq \frac{\pi + \e_1}{2},\ T - \tau < t < T\}$. Hence $v(x,t) \to v(x,T)$ uniformly in $\frac{\pi - \e_1}{2} \leq x \leq \frac{\pi + \e_1}{2}$ as $t \nearrow T$. We have also shown that $v_t(r,t)$ is uniformly bounded for $\frac{\pi - \e_1}{2} \leq x \leq \frac{\pi + \e_1}{2}$, $T - \tau < t < T$, hence $(\frac{\pi}{2},h) = (\frac{\pi}{2}, v(\frac{\pi}{2},T))$ cannot be a singular point.

\end{proof}

In the following proposition, we show that the appearance of the singularity on the boundary $\{x = 0\}$ (i.e. the rotation axis $\{\text{North pole}\} \times [-1,1]$ in the manifold) is an open condition.

\begin{prop} \label{left open}

Let $\mathscr L$ denote the set of function $f_0 \in \mathcal{S}$ such that the solution $f(\cdot, t)$ to the equation \eqref{mcf equation} with initial condition $f(\cdot, 0) = f_0$ becomes singular in finite time, and $\lim\limits_{t \to T} u(0,t) = 0$ for some $T>0$, then $\mathscr L$ is an open set in $\mathcal{S}$ with respect to the $C^1$ norm.

\end{prop}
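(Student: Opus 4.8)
The plan is to show that the condition "the flow develops a neck singularity at the axis $\{x=0\}$ in finite time" is stable under small $C^1$ perturbations of the initial data by sandwiching the perturbed flow between two copies of the unperturbed flow and invoking the comparison principle (Proposition \ref{comparison}). Fix $f_0 \in \mathscr L$, let $M_0$ be the associated desired hypersurface, and let $(M(t))_{t\in[0,T)}$ be its flow, with $u(0,t)\to 0$ as $t\to T$. The first step is to pick a time $t_0 < T$ close enough to $T$ so that $u(0,t_0)$ is very small — say smaller than half of whatever threshold we need — and so that, by Lemma \ref{height decreasing} and Lemma \ref{vertical graph gradient upper bound} applied on $[0,t_0]$, the section curve $\gamma_{t_0}$ is a smooth ascending curve lying in a controlled region.

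Next I would use the smooth dependence of the flow on initial data on the time interval $[0,t_0]$ (which holds because the flow stays smooth and embedded on $[0,t_0]$, away from singular time). Concretely, for $f_0'$ sufficiently $C^1$-close to $f_0$ in $\mathcal S$, the corresponding flow $(M'(t))_{t\in[0,t_0]}$ exists, stays a desired hypersurface with ascending section curve (Proposition \ref{consistent}), and $\gamma'_{t_0}$ is $C^1$-close to $\gamma_{t_0}$; in particular the head point $u'(0,t_0)$ is still small, below the chosen threshold. Alternatively, and perhaps more robustly, one can avoid quoting smooth dependence on a possibly-degenerating interval by instead: (i) choosing a minimal sphere $v\equiv C_-$ strictly below $\gamma_{t_0}$ and a minimal sphere $v\equiv C_+$ strictly above $\gamma_{t_0}$, together with a spherical catenoid with very small neck parameter positioned so that its section curve passes just above the head point of $\gamma_{t_0}$ and under the rest of $\gamma_{t_0}$; (ii) verifying by the $C^1$-closeness at an earlier \emph{fixed} time $t_1 < t_0$ (where smooth dependence is unproblematic) that the perturbed curve $\gamma'_{t_1}$ is also trapped between the corresponding barrier configuration; and then flowing everything forward.

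The heart of the argument is then a barrier comparison forcing the head point of the perturbed flow to reach $x=0$ in finite time. The key observation is that near the axis, the vertical graph function satisfies $u_t = \tfrac{u_{yy}}{1+u_y^2} - (n-1)\tfrac{\cos u}{\sin u}$ with Neumann condition $u_y(0,\cdot)=0$, so at a point where $u_y=0$ and $u$ is near $0$ the driving term $-(n-1)\cos u/\sin u$ is a large negative constant: the head point moves toward the axis at a speed bounded below. More precisely, once $u(0,t)$ is below some fixed small value and the curve is ascending, a shrinking spherical catenoid (or, for $n\ge 3$, the Angenent torus; for $n=2$, a $\lambda$-Angenent curve with $\lambda\in(0,1)$ as in Section 2.3) placed inside the perturbed flow provides an upper barrier for $u'(0,t)$ that itself pinches the axis in finite time; by Proposition \ref{comparison} the perturbed flow must become singular no later than that, and by Proposition \ref{singular time} the only way it can become singular while the head point is near the axis — rather than having $h\to0$ and $u\to\pi/2$, which is ruled out because $\gamma'_{t_0}$ has height bounded below by the lower minimal-sphere barrier $C_-$ — is with $\lim_{t\to T'} u'(0,t)=0$. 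Hence $f_0'\in\mathscr L$, and $\mathscr L$ is $C^1$-open.

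The main obstacle I anticipate is \textbf{controlling the perturbed flow up to a time at which the neck barrier can be installed} — i.e., making precise the transfer of the "small head point, trapped between barriers" picture from $f_0$ to nearby $f_0'$. Smooth dependence on initial data is only innocuous on a compact time interval strictly before the singular time, so one must first fix such a $t_0$ (or $t_1$) for the \emph{unperturbed} flow, record that at that time the configuration is strictly trapped by open barrier conditions (strict inequalities in "on top of"), and only then perturb; the strictness is what survives the $C^1$-small perturbation. The rest — that the catenoid/Angenent barrier genuinely pinches in finite time and that $h$ stays bounded away from $0$ — is exactly the kind of comparison-principle bookkeeping already developed in \cite{chen2023mean} and recalled in Section 2, so it should go through with the spherical analogues introduced here.
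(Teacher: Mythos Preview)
Your outline---smooth dependence on $[0,t_0]$ followed by an Angenent-type barrier to force the perturbed head point to the axis---is the paper's strategy too. But there is a real gap at the step where you ``place a $\lambda$-Angenent curve inside the perturbed flow.''

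Knowing only that $u'(0,t_0)$ is small and that $\gamma'_{t_0}$ is ascending is \emph{not} enough to fit an Angenent curve between the section curve and the axis. The $\lambda$-Angenent curve has a fixed aspect ratio under dilation; for it to sit underneath the section curve, that curve must rise with at least a definite slope over the barrier's $x$-range. Neither Lemma~\ref{vertical graph gradient upper bound} (which only bounds $u_y$ from \emph{above}, and poorly near the axis) nor your ``key observation'' about the drift term $-(n-1)\cot u$ (which can be cancelled by $u_{yy}(0,t)\ge 0$ at the head point) supplies such a lower slope bound.

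The paper fills this gap with tangent-flow analysis: embed $S^n\times[-1,1]$ isometrically into $\R^{n+2}$, regard the flow as mean curvature flow with bounded forcing, and invoke the classification of rotationally symmetric self-shrinkers to conclude that the tangent flow at the singularity of $M^0$ is the cylinder $S^{n-1}(\sqrt{2(n-1)})\times\R$. Thus for $t_0$ near $T$ the rescaled profile curve of $M^0(t_0)$ is $C^1$-close, in any fixed ball $B_R$, to the vertical line $\{x=\sqrt{2(n-1)}\}$; by smooth dependence so is that of $M^1(t_0)$. A suitable dilate of the $\lambda$-Angenent curve can then be inserted on the far side of this line, and comparison forces the perturbed head point to $0$ in finite time.

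Two minor points. There is no ``shrinking spherical catenoid'': the spherical catenoids of Section~\ref{minisurface barrier} are minimal and hence static, so they cannot serve as a pinching barrier. And your appeal to a lower minimal-sphere barrier to exclude $u'(0,t)\to\pi/2$ does not work as stated---a height bound from below does not by itself prevent collapse onto the geodesic hypersphere---but once the Angenent barrier is correctly installed it already traps the head point inside $(0,\pi/4)$, making this step unnecessary.
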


\begin{proof}
    The idea is similar to the proof of \cite[Proposition 3.10]{chen2023mean}. We can isometrically embed $S^n\times[-1,1]$ into $\R^{n+2}$. Then the mean curvature flow $(M^0(t))_{t\in[0,T)}$ corresponds to $f_0\in \mathcal{S}$ in $S^n\times[-1,1]$ can be interpreted as a mean curvature flow with additional force in $\R^{n+2}$. We refer the readers to \cite{HirschZhu23_forcedMCF} for some properties of mean curvature flow with additional force. In particular, by the classification of rotationally symmetric self-shrinkers, the tangent flow at the singular point of $M^0(t)$ is modeled by a cylinder $S^{n-1}(\sqrt{2(n-1)})\times\R$. That says, if we dilate the half-space by $\sqrt{T-t}$, then for any $R>0$, the profile curve of $M^0(t)$ will converge to the straight line $\{x = \sqrt{2(n-1)}\}$ as $t\to T$ in $B_R$.

    By the smooth dependence of the initial data, for any $t_0\in(0,T)$, if $f_1\in \mathcal{S}$ is sufficiently close to $f_0$ in $C^1$, then the mean curvature flow $M^1(t)$ will be very close to $M^0(t)$ on $[0,t_0]$, in particular, also very close to the straight line $\{x = \sqrt{2(n-1)}\}$ in $B_R$. Then using the appropriate dilation of the $\lambda$-Angenent curve as a barrier, we see that $M^1(t)$ can only have singularity on the rotation axis. This shows the openness of $\mathscr L$.
\end{proof}

\subsection{Interpolation family of hypersurfaces} \label{interpolation surfaces} In this section, we construct an immortal flow using the interpolation argument. By Lemma \ref{lem:catenoid midpoint asymptotics}, there exists $C_1 \in (0, \frac{\pi}{2})$ such that $Y_{C_1} = \frac{2}{n}$. Then for the spherical catenoid constructed in Section \ref{minisurface barrier} with parameter $C_1$, the restriction of its profile curve onto the region $D$ has a connected segment that joins the head point $(C_1,0)$ and $(\frac{\pi}{2}, \frac{1}{n})$. We denote this segment of the profile curve by $\mathcal{C}$.

Next, we introduce two constants $\alpha, \beta$. The $\lambda-$Angenent curve is constructed in Appendix \ref{Angenent curve}, we will let $\lambda = \frac{(n-1)}{2}$ in the following. We can choose a constant $\alpha \in (0, \frac{C_1}{4})$ and dilate the $\lambda-$Angenent curve such that the upper half of the dilated $\lambda-$Angenent curve is contained in the region $(2\alpha, \frac{
\pi}{4}) \times [0, \frac{1}{2n}]$, and we denote this segment of curve as $\mathcal{A}$, and let $T_0$ be the time at which $\mathcal{A}$ collapses to a point under mean curvature flow. We choose a constant $\beta \in (C_1, \frac{\pi}{2})$ such that $\mathcal{C}$ is on top of the curve $\{(\beta,y)| y \in [0,\frac{1}{2n}]\} \cup \{(x, \frac{1}{2n})| x \in [\beta, \frac{\pi}{2}]\}$.

We consider a family of initial data and solutions as follows:
\begin{definition}\label{def:initial}
    For $\delta\in(0,\frac{\pi}{2})$, we define a foliation of curves $\rho_\delta$ that is the graph of an increasing smooth function $f_{\delta}(x)$, with vertical graph function $u_{\delta}(y)$ and horizontal graph function $v_{\delta}(x)$, where all odd order derivatives of $u_{\delta}$ at $0$ and $v_{\delta}$ at $\frac{\pi}{2}$ vanish. We also require $\rho_{\delta}$ has the following properties:
\begin{equation} \label{starting point}
f_{\delta} \text{ is defined on the interval } [\delta, \frac{\pi}{2}], \text{ and }f_{\delta}(\delta) = 0.
\end{equation}
\begin{equation} \label{rhoalpha}
\rho_{\alpha} \text{ ends at } (\frac{\pi}{2}, \frac{1}{2n}), \text{ and } \rho_{\alpha} \text{ is on top of } \mathcal{A}.
\end{equation}
We also let $f_{\delta}(\cdot, t)$ be the family of solutions to the equation \eqref{mcf equation} with initial data $f_{\delta}(x)$. Denote the vertical graph function and the horizontal graph function of $f_{\delta}(\cdot, t)$ by $u_{\delta}(\cdot, t), v_{\delta}(\cdot, t)$, and finally we let $\rho_{\delta}(\cdot,t)$ to be the graph of $f_\delta(\cdot,t)$.

\begin{figure}[htb]
    \centering
    \tikzset{global scale/.style={
    scale=#1,
    every node/.append style={scale=#1}
  }
}
    \begin{tikzpicture}[global scale = 0.7]
    \draw (0,0) -- (9.425,0) node[below] {$(\frac{\pi}{2},0)$};
    \draw (0,0) -- (0,6) node[left] {$(0,1)$};
    \draw (9.425,0) -- (9.425,6);
    \draw (0,6) -- (9.425,6);
    \node[below] at (-0.2,0) {$(0,0)$};
    \node[below] at (9.925,6.4) {$(\frac{\pi}{2},1)$};
    \node[below] at (2.5,0) {$(\alpha,0)$};
    \node[right] at (9.425,4) {$(\frac{\pi}{2},\frac{1}{2n})$};
    \node[below] at (3.5,0) {$(\delta,0)$};    
    \node[below] at (6,0) {$(\beta,0)$};
    \draw plot[smooth,tension=.5]
coordinates {(0.5,0) (0.5,0.8) (0.6,1.5) (0.7,2.2) (1,2.8)(2,3.7) (3,4.2) (4,4.6) (5,4.8) (6,5)(7,5.1) (8,5.15) (9,5.2)(9.425,5.2)};
    \draw plot[smooth,tension=.5]
coordinates {(1.5,0) (1.5,0.6) (1.6,1.2) (1.7,1.7) (2,2.5) (3,3.5) (4,4) (5,4.3) (6,4.5)(7,4.6) (8,4.66) (9,4.7)(9.425,4.7)};
    \draw plot[smooth,tension=.5]
coordinates {(2.5,0) (2.5, 0.5) (2.6, 1) (2.7,1.3) (3,2) (4,3) (5,3.5) (6,3.7)(7,3.85) (8, 3.95) (9,4) (9.425,4)};
    \node[below] at (9,3.9) {$\rho_\alpha$};   
    \draw plot[smooth,tension=.5]
coordinates {(3.5,0) (3.5, 0.5) (3.6, 0.9) (3.7,1.3) (4,1.8) (5,2.5) (6,2.9) (7,3.1) (8,3.2) (9,3.22) (9.425,3.22)};
    \node[below] at (9,3.22) {$\rho_\delta$};  
    \draw plot[smooth,tension=.4]
coordinates {(4.5,0) (4.5,0.5) (4.55,0.9) (4.6,1.2) (4.7,1.4) (5,1.7) (5.5,1.93)(6,2.1) (7,2.3) (8,2.4) (9,2.5) (9.425,2.5)};
    \draw plot[smooth,tension=.9]
coordinates {(6,0) (6,0.3) (6.2,0.8) (6.5,1.2) (7,1.6) (7.5,1.8) (8,1.9) (9,2) (9.425,2)};
    \node[below] at (9,1.9) {$\rho_\beta$};  
    \draw
    plot[smooth,tension=.5]
coordinates {(7.5,0) (7.5,0.3)(7.6,0.6) (8,0.9) (8.5,1.1) (9,1.2)(9.425,1.2)};    
    \draw plot[smooth,tension=.5]
coordinates {(8,0) (8,0.2)(8.1,0.4) (8.5,0.6) (9,0.7) (9.425,0.7)};  
    \end{tikzpicture}
    \caption{The family of interpolation curves.}
\end{figure}

\end{definition}
After reflecting $\rho_{\delta}$ with respect to the lines $\{x = \frac{\pi}{2}\}$ and $\{y = 0\}$, we get a smooth curve in  $[0,\pi] \times [-1,1]$. By property \eqref{starting point} and the fact that $\{\rho_{\delta}\}_{\delta \in (0,\frac{\pi}{2})}$ forms a foliation, we know for $\delta > \alpha$, the curve $\rho_{\alpha}$ is on top of $\rho_{\delta}$. By property \eqref{rhoalpha}, the height of $\rho_{\alpha}$ is $\frac{1}{2n}$, since $\beta > \alpha$, thus the height of $\rho_{\beta}$ is at most $\frac{1}{2n}$. Therefore by property \eqref{starting point} and the choice of $\beta$, $\mathcal{C}$ is on top of $\rho_{\beta}$.

\begin{lem} \label{away from boundary}
Given $\delta > \alpha$, if $\lim\limits_{t \to T} u_{\delta}(0,t) = 0$, then $u_{\delta}(0,t) \leq \beta$ for all time before the singularity occurs.
\end{lem}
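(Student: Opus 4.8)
The plan is to use the spherical catenoid with parameter $C_1$ as a static barrier and argue by contradiction. The first thing I would record is the one extra fact about the flow that we need: since $\delta > \alpha$ and the curves $\{\rho_{\delta'}\}$ foliate a region of $D$ with $f_{\delta'}(\delta')=0$, the curve $\rho_\delta$ lies below $\rho_\alpha$, whose height is $\frac{1}{2n}$; hence the initial height of $\rho_\delta$ is at most $\frac{1}{2n}$, and by Lemma \ref{height decreasing} the height of $\gamma_t$ stays $\le \frac{1}{2n}$ for every $t$ before the singular time $T$. This is the only place the hypothesis $\delta>\alpha$ is used.

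Next, suppose toward a contradiction that $u_\delta(0,t_1)>\beta$ for some $t_1\in[0,T)$. By Proposition \ref{consistent}, $\gamma_{t_1}$ is an ascending section curve, so, having head point $(u_\delta(0,t_1),0)$ with $u_\delta(0,t_1)>\beta$ and height $\le\frac{1}{2n}$, it is contained in $\{\,x>\beta,\ 0\le y\le\frac{1}{2n}\,\}$. Recall that $\mathcal{C}$ is an increasing graph over $[C_1,\frac{\pi}{2}]$ joining $(C_1,0)$ to $(\frac{\pi}{2},\frac{1}{n})$ and that $\beta$ was chosen so that $\mathcal{C}$ lies on top of the L-shaped curve $\{\beta\}\times[0,\frac{1}{2n}]\cup[\beta,\frac{\pi}{2}]\times\{\frac{1}{2n}\}$; in particular its horizontal graph function satisfies $v_{\mathcal{C}}\ge\frac{1}{2n}$ on $[\beta,\frac{\pi}{2}]$, strictly for $x>\beta$ (as $v_{\mathcal{C}}$ is increasing from $0$ to $\frac{1}{n}$), and $v_{\mathcal{C}}(\frac{\pi}{2})=\frac{1}{n}>\frac{1}{2n}$. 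Comparing on the common $x$-interval $[u_\delta(0,t_1),\frac{\pi}{2}]\subset(\beta,\frac{\pi}{2}]$ shows that $\mathcal{C}$ lies strictly on top of $\gamma_{t_1}$. Now I would pass back to $S^n\times[-1,1]$: let $\Sigma_{C_1}$ be the spherical catenoid obtained by rotating the full periodic profile, normalized to have a neck at $y=0$. Since $\frac{1}{2n}<Y_{C_1}=\frac{2}{n}$, inside the slab $S^n\times[-\frac{1}{2n},\frac{1}{2n}]$, which contains $M_\delta(t_1)$, the profile of $\Sigma_{C_1}$ occupies only $x\in[C_1,x^*]$ for some $x^*\le\beta$ (where $v_{\mathcal{C}}(x^*)=\frac{1}{2n}$), whereas the profile of $M_\delta(t_1)$ occupies $x\in[u_\delta(0,t_1),\pi-u_\delta(0,t_1)]$; since $u_\delta(0,t_1)>\beta\ge x^*$, these are disjoint, so $M_\delta(t_1)$ and $\Sigma_{C_1}$ are disjoint.

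Then the comparison principle (Proposition \ref{comparison}, applied with the static minimal hypersurface $\Sigma_{C_1}$) forces $M_\delta(t)$ to remain disjoint from $\Sigma_{C_1}$ for all $t\in[t_1,T)$. If $u_\delta(0,t)=C_1$ for some such $t$, then $M_\delta(t)$ would contain the $SO(n)$-orbit of $(C_1,0)$, which lies on $\Sigma_{C_1}$ — impossible; hence $u_\delta(0,t)\ne C_1$ throughout $[t_1,T)$. By continuity of $t\mapsto u_\delta(0,t)$ and $u_\delta(0,t_1)>\beta>C_1$, this gives $u_\delta(0,t)>C_1$ for all $t\in[t_1,T)$, so $\lim_{t\to T}u_\delta(0,t)\ge C_1>0$, contradicting the hypothesis. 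Therefore no such $t_1$ exists, i.e.\ $u_\delta(0,t)\le\beta$ for all $t$ before the singularity.

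The step I expect to be the main obstacle is the geometric bookkeeping in the second paragraph: one must check that $\mathcal{C}$, together with its reflections across $\{x=\frac{\pi}{2}\}$ and $\{y=0\}$ and the periodic continuation of the catenoid along the $\R$-factor, genuinely stays clear of $M_\delta(t_1)$ — including near the walls $\{x=0\}$, $\{x=\frac{\pi}{2}\}$, $\{y=0\}$ — so that the avoidance principle can be invoked uniformly up to the rotation axis. This is precisely where the uniform height bound $\frac{1}{2n}$ and the normalization $Y_{C_1}=\frac{2}{n}$ from Lemma \ref{lem:catenoid midpoint asymptotics} enter. Everything else is a routine use of the comparison principle, parallel to \cite{chen2023mean}.
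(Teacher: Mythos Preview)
Your argument is correct and follows the same approach as the paper: bound the height by $\tfrac{1}{2n}$ via $\delta>\alpha$ and Lemma~\ref{height decreasing}, then use the spherical catenoid $\mathcal{C}$ as a static barrier to trap the head point above $C_1$ once it exceeds $\beta$, contradicting $u_\delta(0,t)\to 0$. The paper's proof is terser---it invokes Proposition~\ref{comparison} directly at the level of section curves---whereas you spell out the passage to the full hypersurfaces and the disjointness check inside the slab $|y|\le\tfrac{1}{2n}$; this extra bookkeeping is correct and arguably clarifies what the paper leaves implicit.
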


\begin{proof}

Since $\delta > \alpha$, we know the height of $\rho_{\delta}$ is less than $\frac{1}{2n}$. By Lemma \ref{height decreasing}, we know $f_{\delta}(\cdot, t) \leq \frac{1}{2n}$ for all time $t$.

If $u_{\delta}(0,t_1) > \beta$ at some time $t_1$ before the singularity occurs, then by the choice of $\beta$, $\mathcal{C}$ is on top of the graph of $f_{\delta}(\cdot, t_1)$. Then by Theorem \ref{comparison}, $\mathcal{C}$ is on top of the graph of $f_{\delta}(\cdot, t)$ for all time $t > t_1$, which implies that $u_{\delta}(0,t) > C_1$ for all $t > t_1$. We get a contradiction with the assumption $\lim\limits_{t \to T} u_{\delta}(0,t) = 0$.

\end{proof}

We are now ready to prove the existence of an immortal flow on $S^n \times [-1,1]$. We first sketch the picture of the two main comparison results that are used to apply the interpolation argument, See Figure \ref{fig2}. We compare the family of initial data $\rho_{\delta}$ to the curves $\mathcal{A}$ and $\mathcal{C}$. The head point of the initial curve in the first picture will tend to $0$ in finite time, meanwhile, the initial curve in the second picture will remain $C_1$ away from $0$ before the singular time.

\begin{figure}[htbp]
    \centering
    \includegraphics[width=0.7\textwidth]{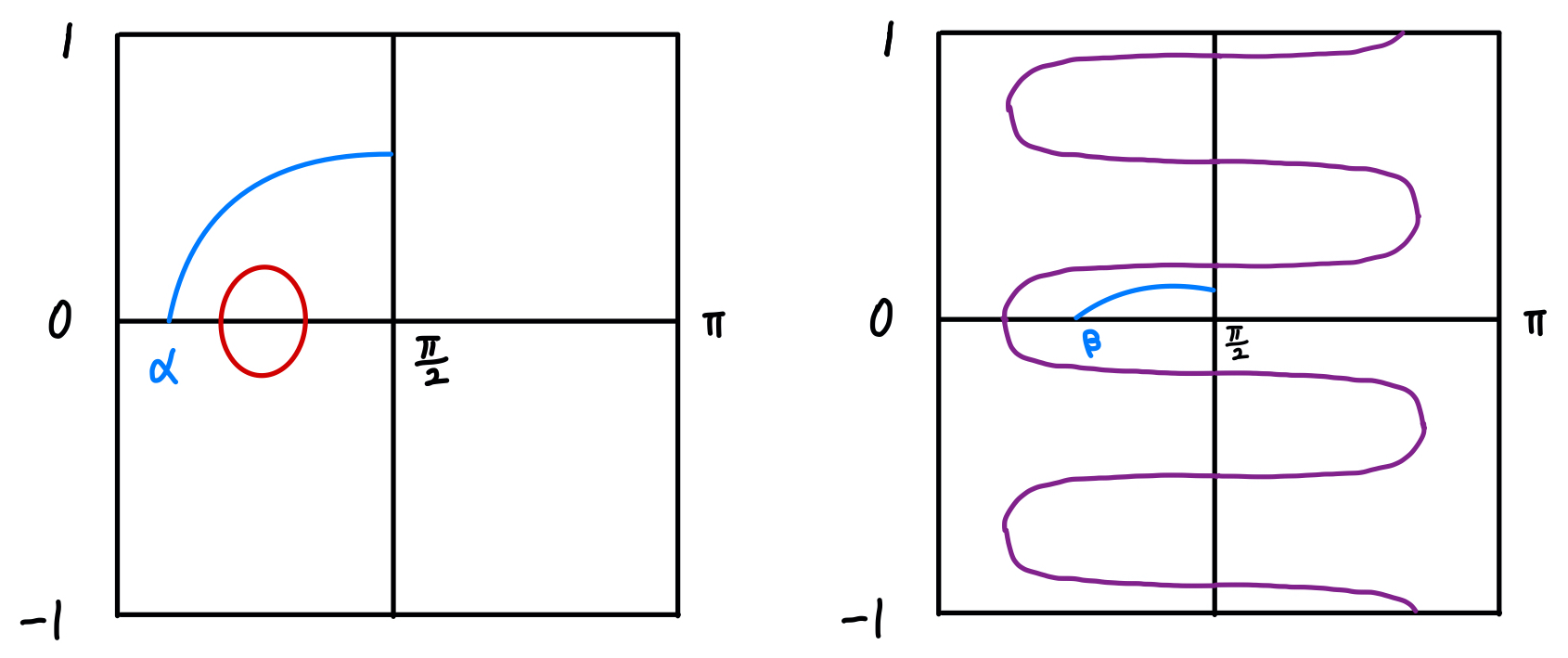}
    \caption{The blue curves are the flows that we want to study. The red curve on the left represents one $\lambda$-Angenent curve; the purple curve on the right represents one spherical catenoid.}
    \label{fig2}

\end{figure}

\begin{thm} \label{free boundary immortal flow}
There exists an immortal nonstatic rotationally symmetric mean curvature flow of hypersurfaces in $S^n \times [-1,1]$.
\end{thm}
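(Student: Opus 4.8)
The plan is an interpolation argument along the foliation $\{\rho_\delta\}_{\delta\in[\alpha,\beta]}$, in the spirit of Angenent's shooting method and of the authors' earlier construction. Recall that, by Proposition~\ref{left open}, the set $\mathscr{L}\subset\mathcal{S}$ of initial curves whose flow becomes singular in finite time at the rotation axis (i.e.\ $u(0,t)\to 0$) is open in the $C^1$ topology, and that $\delta\mapsto\rho_\delta$ is a continuous path in $\mathcal{S}$. I would first locate the two endpoints $\alpha$ and $\beta$ of this path relative to $\mathscr{L}$, using the barriers $\mathcal{A}$ and $\mathcal{C}$, and then run a supremum argument.

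For $\delta=\alpha$: since $\rho_\alpha$ lies on top of $\mathcal{A}$ by \eqref{rhoalpha} and the two curves are initially disjoint, the comparison principle (Proposition~\ref{comparison}) keeps the section curve of the flow on top of the evolving curve $\mathcal{A}(t)$ for as long as both are smooth; in particular the head point $u_\alpha(0,t)$ stays strictly to the left of the left endpoint $p_1(t)$ of $\mathcal{A}(t)$ on $\{y=0\}$, for otherwise the two curves would have to touch. The key geometric input is that $\mathcal{A}(t)$, being the upper half of a small dilate of a $\lambda$-Angenent curve ($\lambda=(n-1)/2$) placed near the north pole, collapses to a point of the axis $\{x=0\}$ at the finite time $T_0$, so $p_1(t)\to 0$; hence $u_\alpha(0,t)\to 0$ no later than $T_0$, which is an axis pinch. (Should the flow from $\rho_\alpha$ become singular at an earlier time $T<T_0$, the same trapping keeps $u_\alpha(0,\cdot)$ bounded away from $\pi/2$, so by Proposition~\ref{singular time} that singularity is again an axis pinch.) Thus $\rho_\alpha\in\mathscr{L}$. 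For $\delta=\beta$: since $\mathcal{C}$ is a static minimal hypersurface lying on top of $\rho_\beta$, the comparison principle keeps the section curve of the flow below $\mathcal{C}$, so $u_\beta(0,t)$ can never reach the head point $C_1$ of $\mathcal{C}$; in particular $u_\beta(0,t)\not\to 0$, whence $\rho_\beta\notin\mathscr{L}$.

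Now set $\eta=\sup\{\delta\in[\alpha,\beta]:\rho_\delta\in\mathscr{L}\}$. Since $\mathscr{L}$ is open and $\delta\mapsto\rho_\delta$ is continuous, $\{\delta\in[\alpha,\beta]:\rho_\delta\in\mathscr{L}\}$ is open in $[\alpha,\beta]$; with $\alpha\in\mathscr{L}$ and $\beta\notin\mathscr{L}$ this gives $\alpha<\eta\le\beta$ and $\rho_\eta\notin\mathscr{L}$. I claim the flow from $\rho_\eta$ is immortal. If not, by Proposition~\ref{singular time} it becomes singular at a finite time $T$ with $u_\eta(0,t)\to 0$ or $u_\eta(0,t)\to\pi/2$; since $\rho_\eta\notin\mathscr{L}$ the former is impossible, so $u_\eta(0,t)\to\pi/2$, and there is $t_0<T$ with $u_\eta(0,t_0)>\beta$. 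By continuous dependence of the flow on the initial datum over the compact interval $[0,t_0]$, there is $\varepsilon>0$ so that for every $\delta$ with $|\delta-\eta|<\varepsilon$ the flow from $\rho_\delta$ exists smoothly on $[0,t_0]$ and $u_\delta(0,t_0)>\beta$. Since the supremum $\eta$ is not attained and $\eta>\alpha$, we may pick $\delta_1$ with $\max(\alpha,\eta-\varepsilon)<\delta_1<\eta$ and $\rho_{\delta_1}\in\mathscr{L}$; then $\delta_1>\alpha$ and $u_{\delta_1}(0,t)\to 0$, so Lemma~\ref{away from boundary} forces $u_{\delta_1}(0,t)\le\beta$ for all $t$ before its singular time, contradicting $u_{\delta_1}(0,t_0)>\beta$. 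Hence the flow from $\rho_\eta$ has no finite-time singularity, i.e.\ it is immortal; it is rotationally symmetric by construction, and since $\rho_\eta$ is a non-constant ascending curve, Lemma~\ref{height decreasing} shows that its height strictly decreases in $t$, so the flow is nonstatic.

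The step I expect to be the main obstacle is the key input used above: that the small dilate $\mathcal{A}$ of a $\lambda$-Angenent curve, placed in $(2\alpha,\pi/4)\times[0,1/(2n)]$, genuinely collapses to a point of the rotation axis in finite time under the honest mean curvature flow of $S^n\times[-1,1]$, and does so cleanly enough to trap $u_\alpha(0,\cdot)$ to its left. This is where the value $\lambda=(n-1)/2$ and the placement near the north pole enter: there the ambient metric $(\sin x)^{2(n-1)}(dx^2+dy^2)$ is only asymptotic to the flat rotation metric for which the $\lambda$-Angenent curve is an exact self-shrinker, so one must check that $\mathcal{A}$ is a genuine (strict) inner barrier and that the collapse indeed occurs on $\{x=0\}$ --- the analogue here of the classical use of the Angenent torus as a barrier forcing a neckpinch, and of the barrier step in the proof of Proposition~\ref{left open}. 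The remaining ingredients (preservation of the ``on top of'' relation under the comparison principle, continuous dependence on initial data, and $C^1$-continuity of the foliation) are routine.
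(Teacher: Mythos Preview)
Your interpolation-and-supremum argument is essentially the paper's, and your contradiction via Lemma~\ref{away from boundary} is a clean variant of the paper's continuous-dependence step. The substantive gap is exactly the one you flag, but its resolution is more elementary than your closing paragraph suggests, and your framing of it is slightly off in a way that matters.

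You should \emph{not} try to evolve $\mathcal{A}$ by the honest mean curvature flow of $S^n\times[-1,1]$, nor invoke Proposition~\ref{comparison} for this barrier: the self-similarly shrinking family $\sqrt{T_0-t}\,\mathcal{A}$ is not a mean curvature flow in the ambient product, so the geometric avoidance principle does not apply to it. The paper instead works directly with the vertical graph PDE. On the strip $0<u\le\pi/4$ the elementary inequality $\cot u\ge\tfrac{1}{2u}$ gives
\[
\frac{u_{yy}}{1+u_y^2}-(n-1)\frac{\cos u}{\sin u}\;\le\;\frac{u_{yy}}{1+u_y^2}-\frac{n-1}{2}\cdot\frac{1}{u},
\]
and the right-hand side is precisely the operator of equation~\eqref{lambdaAtorus} with $\lambda=(n-1)/2$. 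Hence the self-similar evolution $\sqrt{T_0-t}\,\mathcal{A}$ is a \emph{supersolution} of the vertical graph equation on that strip, and the scalar parabolic maximum principle (not Proposition~\ref{comparison}) keeps $u_\delta(\cdot,t)$ below it for every $\delta\le\alpha$. Since $\sqrt{T_0-t}\,\mathcal{A}$ collapses to the origin at $t=T_0$, this forces $u_\delta(0,t)\to 0$ before time $T_0$. This is exactly why $\lambda=(n-1)/2$ was chosen and why $\mathcal{A}$ was confined to the region $x<\pi/4$; once you plug this in, the rest of your argument goes through unchanged.
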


\begin{proof}

For $\delta \leq \alpha$, we know that the curve $\rho_{\delta}$ is on top of $\mathcal{A}$. We restricted the curve $\rho_{\delta}$ and $\mathcal{A}$ onto the region $[0, \frac{\pi}{4}] \times [0,1]$, and view these two segments as the graph of a function of $y$. In this region, the function value is bounded above by $\frac{\pi}{4}$, thus
\begin{equation*}
\frac{u_{yy}}{1+(u_y)^2}-(n-1)\frac{\cos(u)}{\sin(u)} \leq \frac{u_{yy}}{1+(u_y)^2} - \frac{(n-1)}{2} \frac{1}{u}.
\end{equation*}
Hence $\sqrt{T_0 - t} \mathcal{A}$ (i.e., the solution to \eqref{lambdaAtorus} with initial condition $\mathcal{A}$) is a supersolution to the vertical graph equation, and initially $\mathcal{A}$ is on top of the curve $\rho_{\delta}$. By maximum principle, $\sqrt{T_0 - t} \mathcal{A}$ remains on top of the graph of $u_{\delta}(\cdot, t)$ for all time $t > 0$. The head point of the $\lambda-$Angenent curve $\sqrt{T_0 - t} \mathcal{A}$ tends to $0$ in finite time $T_0$, which forces $u_{\delta}(0, t) \to 0$ at some time $t < T_0$. By Proposition \ref{comparison}, if $u_{\delta}(0, t) \to 0$ in finite time, then $u_{\delta'}(0,t) \to 0$ in finite time for all $0 < \delta' < \delta$.

For $\beta \leq \delta < \frac{\pi}{2}$, $\mathcal{C}$ is on top of the curve $\rho_{\delta}$. By Proposition \ref{comparison}, $\mathcal{C}$ is on top of the graph of $f_{\delta}(\cdot, t)$ for all time $t$, which implies $u_{\delta}(0,t) \geq \beta$ for all $t$.

Hence, by continuous dependence of the initial data (see e.g. \cite{Amann}) and Proposition \ref{left open}, there exists a maximal interval $(0, \eta)$ such that for any $\delta$ within this interval, $u_{\delta}(0, t)$ converges to $0$ in finite time. As indicated in the preceding argument, $\alpha < \eta \leq \beta$. By Proposition \ref{comparison} and Proposition \ref{singular time}, the singular time $T(\rho_{\delta})$ is strictly increasing in $\delta \in (0, \eta)$, and its limit as $\delta\to \eta$ has to be $\infty$, otherwise $u_{\eta}(0,t)$ will converge to $0$ in finite time by the continuous dependence. By the selection of $\eta$, $u_{\eta}(0, t)$ never reaches $0$ in finite time. Combining this with Lemma \ref{away from boundary}, we conclude that for $\delta \in (\alpha, \eta)$ and $t \in [0, T(\rho_{\delta}))$, we have $u_{\delta}(0, t) \leq \beta$. By continuous dependence and the fact that $\lim\limits_{\delta \to \eta} T(\rho_{\delta}) = \infty$, we can deduce that $u_{\eta}(0, t) \leq \beta$ for all $t$.

As a result, $u_{\eta}(0,t)$ does not converge to either $0$ or $\frac{\pi}{2}$ within any finite time, by Proposition \ref{singular time}, the solutions $u_{\eta}(\cdot, t)$ and $v_{\eta}(\cdot, t)$ exist for all time $t \in [0, \infty)$. 

\end{proof}

\subsection{Multiplicity \texorpdfstring{$2$}{Lg} convergence}
From the construction above, we know that the solution $f_{\eta}(\cdot, t)$ to the equation \eqref{mcf equation} exists for all time $t \in [0, \infty)$. In the following, we will show the mean curvature flow induced from $f_{\eta}(\cdot,t)$ will converge to the minimal sphere $S^n \times \{0\}$ with multiplicity $2$. In fact, outside a neighborhood of the north pole and south pole of $S^n\times\{0\}$, we can show the mean curvature flow has two connected components, and each component converges to $S^n\times\{0\}$ outside a neighborhood of the north pole and south pole smoothly.

By a similar height decreasing argument as in \cite[Lemma 3.15]{chen2023mean}, we show that the head point of the function $f_{\eta}$ converges to $0$ as $t \to \infty$, i.e. $\lim\limits_{t \to \infty} u_{\eta} (0, t) = 0$. We can also use $c\mathcal{A}$ as a barrier to obtain a linear upper bound of the function $f_{\eta}$.

\begin{lem} \label{height upper bound}
$f_{\eta}(x, t) < \frac{1}{4n\alpha} x$ for all $t \in [0,\infty)$ and $ x \in (u_{\eta}(0,t), 4\alpha]$.
\end{lem}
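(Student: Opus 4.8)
The plan is to use a dilated $\lambda$-Angenent curve $c\mathcal{A}$ as a barrier, exactly as in the proof of Theorem \ref{free boundary immortal flow}, but now tracking where the barrier sits rather than just using it to force a singularity. Recall that $\mathcal{A}$ is contained in $(2\alpha,\frac{\pi}{4})\times[0,\frac{1}{2n}]$ and, after dilation by $\sqrt{T_0-t}$, its head point moves monotonically toward the $x$-axis while its top stays below height $\frac{1}{2n}$. The key point is that the line $\{y = \frac{1}{4n\alpha}x\}$ passes through the origin and through $(\alpha, \frac{1}{4n})$, and since $\mathcal{A}\subset\{x>2\alpha, y\leq \frac{1}{2n}\}$, a suitable dilate $c\mathcal{A}$ (with $c\leq 1$) can be placed so that its profile curve lies \emph{below} this line on the relevant range of $x$. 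First I would fix such a dilation: choose $c$ so that $c\mathcal{A}$ is contained in the region bounded below by the $x$-axis and above by $\{y=\frac{1}{4n\alpha}x\}$, restricted to $x\in[0,4\alpha]$. Because $\mathcal{A}$ has its left endpoint at $x>2\alpha$ and height $\leq\frac{1}{2n}$, and the line at $x=2\alpha$ has height $\frac{1}{2n}$, taking $c$ slightly less than $1$ (or even $c=1$ after a small initial delay) works.

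Next I would compare $f_\eta(\cdot,0)$ (i.e.\ $\rho_\eta$) with this dilated Angenent curve. Since $\eta>\alpha$, the height of $\rho_\eta$ is at most $\frac{1}{2n}$ (by Lemma \ref{height decreasing} and property \eqref{rhoalpha}), and $\rho_\eta$ starts at $(\eta,0)$ with $\eta>\alpha$; one checks that $\rho_\eta$ lies below the line $\{y=\frac{1}{4n\alpha}x\}$ at $t=0$ on $(u_\eta(0,0),4\alpha]$ — this uses that $\rho_\eta$ is an ascending graph starting to the right of $\alpha$, together with its height bound. Actually the cleaner route is: arrange (by choice of the foliation $\{\rho_\delta\}$, which we have freedom to prescribe) that every $\rho_\delta$ with $\delta\geq\alpha$ lies strictly below $\{y=\frac{1}{4n\alpha}x\}$ on $[\,\delta,4\alpha\,]$; this is consistent with \eqref{starting point} and \eqref{rhoalpha} since $\rho_\alpha$ ends at $(\frac{\pi}{2},\frac{1}{2n})$ and passes below $(\alpha,\frac14{\cdot}\frac1n)$ can be enforced. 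With this in hand, I would then run a localized comparison on the region $x\in[0,4\alpha]$: as in the proof of Theorem \ref{free boundary immortal flow}, on $\{u\leq\frac{\pi}{4}\}$ the dilated Angenent curve $\sqrt{T_0-t}\,\mathcal{A}$ (and hence $c\sqrt{T_0-t}\,\mathcal{A}$) is a supersolution of the vertical graph equation, because $(n-1)\frac{\cos u}{\sin u}\geq \frac{n-1}{2}\cdot\frac{1}{u}$ there. Since the barrier starts above $f_\eta$ on this range and the two agree on their behavior at $x=0$ (both have vanishing $x$-derivative, i.e.\ the reflection condition), the maximum principle gives $f_\eta(x,t)$ stays below $c\sqrt{T_0-t}\,\mathcal{A}$, which in turn stays below $\{y=\frac{1}{4n\alpha}x\}$, for all $t<T_0$. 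For $t\geq T_0$ one restarts the barrier argument with a smaller dilation, using that $u_\eta(0,t)\to 0$ and $f_\eta(\cdot,t)$ is still ascending with small height, so one can always slip a fresh dilated Angenent barrier below the line and above the flow on $x\in[0,4\alpha]$; iterating (or taking a limit of such barriers) yields the bound for all $t\in[0,\infty)$.

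The technical care needed is in the boundary handling at $x=0$: the Angenent curve barrier $c\sqrt{T_0-t}\,\mathcal{A}$ only lives on $x\in[\text{(its head)},\ c\sqrt{T_0-t}\cdot(\text{right end of }\mathcal{A})]$, so to apply the maximum principle on a fixed domain I must argue on the moving region between the head point of the barrier and $x=4\alpha$, checking that at $x=4\alpha$ the barrier is still above the flow (true since both are $\leq\frac{1}{2n}<\frac{1}{4n\alpha}\cdot 4\alpha=\frac{1}{n}$, and more precisely the barrier exceeds $f_\eta$ there because $f_\eta$ is ascending and $4\alpha<\frac{\pi}{2}$) — and that the head point of the barrier crosses $x=4\alpha$ only after $f_\eta$'s head has moved far right, which is not the case, so actually the comparison is on $[h_{\text{bar}}(t), 4\alpha]$ where both endpoints are controlled. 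The main obstacle I anticipate is precisely this bookkeeping — ensuring that the dilated Angenent barrier can be continuously re-selected to remain sandwiched between the flow and the line $\{y=\frac{1}{4n\alpha}x\}$ on $[0,4\alpha]$ for \emph{all} $t$, not just $t<T_0$ — but since $u_\eta(0,t)\to 0$ monotonically and the height of $f_\eta(\cdot,t)$ is nonincreasing and $<\frac{1}{2n}$, the family of admissible barriers never degenerates, so a compactness/continuity argument closes this gap.
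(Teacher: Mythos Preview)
Your approach is fundamentally inverted, and this is not a bookkeeping issue but a genuine gap. The Angenent curve being a supersolution of the vertical graph equation means exactly that the ordering $u_\eta \le u_{\mathcal A,L}$ (equivalently: the graph of $f_\eta$ lies \emph{on top of} the Angenent arch) is preserved by the flow, not the opposite ordering. Concretely, at a first contact point one has $\partial_t u_\eta - \partial_t u_{\mathcal A,L} = -[(n-1)\cot u - (n-1)/(2u)] \le 0$, so a touching from below (your desired configuration ``$f_\eta$ below $c\mathcal A$'') is not ruled out by the maximum principle. Thus your sandwich ``$f_\eta$ below $c\mathcal A$ below the line'' does not propagate. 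Moreover, even the geometry at $t=0$ fails: the arch $c\mathcal A$ meets $\{y=0\}$ at its left endpoint $x=c r_1>0$, while $f_\eta(c r_1,0)>0$ whenever $u_\eta(0,0)<c r_1$, so ``$c\mathcal A$ on top of $f_\eta$'' cannot hold on the full overlap unless the head point of $f_\eta$ is already to the right of the arch. Your restart/continuation scheme is then circular, since placing a fresh barrier at time $t$ requires the very bound you are trying to prove.

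The paper's argument uses the Angenent barrier in the \emph{other} direction, via contradiction, and is one line. Suppose $f_\eta(x_0,t_0)\ge \frac{x_0}{4n\alpha}$ for some $t_0$ and some $x_0\in(u_\eta(0,t_0),4\alpha]$. Set $c=x_0/(2\alpha)$; then $c\mathcal A\subset(x_0,\,c\pi/4)\times[0,\,x_0/(4n\alpha)]$. Since $f_\eta(\cdot,t_0)$ is increasing and $f_\eta(x_0,t_0)\ge x_0/(4n\alpha)$, the graph of $f_\eta(\cdot,t_0)$ is on top of $c\mathcal A$. Now run exactly the barrier argument from Theorem \ref{free boundary immortal flow} starting at time $t_0$: the Angenent flow of $c\mathcal A$ is a supersolution, its head point $\sqrt{(T_0'-t)/T_0'}\,c r_1$ tends to $0$, and hence $u_\eta(0,t)\to 0$ in finite time. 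This contradicts the immortality of $f_\eta$. So no direct barrier between $f_\eta$ and the line is needed; one simply observes that violating the linear bound would let you slip an Angenent torus \emph{under} the flow and force a neck singularity.
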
 

\begin{proof}

We prove this by contradiction. Suppose not, then $f_{\eta}(x_0,t) \geq \frac{1}{4n\alpha} x_0$ for some $t \in [0,\infty)$, $x_0 \in (u_{\eta}(0,t),4\alpha]$, then the graph of $f_{\eta}(\cdot, t)$ is on top of the dilated $\lambda-$Angenent curve $\frac{x_0}{2\alpha} \mathcal{A}$, which implies that $f_{\eta}$ has a finite time singularities. This is a contradiction.
\end{proof}

We are ready to prove the key long-time gradient estimate of $f_{\eta}$.

\begin{prop} \label{gradient estimate}
Given $0 < a < b < \frac{\pi}{2}$, for any $0 < \e_1 < 1 -  \frac{\ln (\tan \frac{b}{2})}{\ln (\tan\frac{a}{2})}$, there exist a constant $\omega$ depending on $a, \e_1$, and a time $T_1$ depending on $a,b,\e_1$, such that $$\frac{\partial}{\partial x} f_{\eta}(x,t) \leq \tan \left(\frac{\pi \ln (\tan \frac{b}{2})}{2 \ln (\tan (\frac{a}{2}))} + \frac{\omega \cos b}{\ln t} + \e_1\right),$$ for all $x \in [b,\frac{\pi}{2}]$, $t \geq T_1$. In addition, $\lim\limits_{t \to \infty} f_{\eta}(\frac{\pi}{2},t) - f_{\eta}(b,t) = 0$.
\end{prop}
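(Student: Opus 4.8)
The plan is a comparison argument for the \emph{angle function} $\phi(x,t):=\arctan\bigl(\partial_x f_\eta(x,t)\bigr)$ on the part of the section curve $\gamma_t$ that is a horizontal graph. Since $\gamma_t$ is ascending by Proposition~\ref{consistent}, $\phi\in(0,\tfrac{\pi}{2})$; differentiating the horizontal graph equation in \eqref{mcf equation} and rewriting everything through $\phi$ (using $\partial_x f_\eta=\tan\phi$, $\tfrac{1}{1+(\partial_x f_\eta)^2}=\cos^2\phi$) gives
\[
\phi_t=\cos^2\!\phi\;\phi_{xx}+(n-1)\frac{\cos x}{\sin x}\,\phi_x-(n-1)\frac{\sin\phi\cos\phi}{\sin^2 x}.
\]
This is quasilinear parabolic, degenerate-elliptic in $\phi_{xx}$ and smooth in the lower order terms away from $x=0$, so $\phi$ is both a sub- and a super-solution of the associated operator and may be compared with any classical super-solution. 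At $x=\tfrac{\pi}{2}$ the reflection symmetry forces $\partial_x f_\eta(\tfrac{\pi}{2},t)=0$, i.e. $\phi(\tfrac{\pi}{2},t)=0$, and $\phi$ extends smoothly and oddly across $x=\tfrac{\pi}{2}$.

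The barriers are the stationary profiles $\Theta_r(x):=\tfrac{\pi}{2}\,\ln\tan(x/2)\big/\ln\tan(r/2)$ for $r\in(0,\tfrac{\pi}{2})$, $x\in[r,\tfrac{\pi}{2}]$. Using $\tfrac{d}{dx}\ln\tan(x/2)=1/\sin x$ one computes
\[
\cos^2\!\Theta_r\,\Theta_r''+(n-1)\frac{\cos x}{\sin x}\,\Theta_r'=\frac{\pi}{2\,\ln\tan(r/2)}\cdot\frac{\cos x}{\sin^2 x}\,\bigl((n-1)-\cos^2\!\Theta_r\bigr),
\]
and since $\ln\tan(r/2)<0$, $\cos x\ge 0$ on $[r,\tfrac{\pi}{2}]$, $(n-1)-\cos^2\!\Theta_r\ge n-2\ge 0$, and $\sin\Theta_r\cos\Theta_r\ge 0$ (as $\Theta_r\in[0,\tfrac{\pi}{2}]$ there), the full operator applied to $\Theta_r$ is $\le 0$; hence $\Theta_r$ is a stationary super-solution. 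Moreover $\Theta_r$ is strictly decreasing with $\Theta_r(r)=\tfrac{\pi}{2}$ and $\Theta_r(\tfrac{\pi}{2})=0$, matching $\phi$ at $x=\tfrac{\pi}{2}$ and dominating $\phi$ at the left endpoint.

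Next I would take $r(t)\searrow 0$ with $r(t)>u_\eta(0,t)$ for large $t$ (possible since $u_\eta(0,t)\to 0$, and comparable to $u_\eta(0,t)$), chosen small enough that the linear bound $f_\eta(x,t)<\tfrac{x}{4n\alpha}$ of Lemma~\ref{height upper bound} confines the steep vertical-graph part of $\gamma_t$ to $\{x<r(t)\}$, so $\gamma_t$ is a horizontal graph over $[r(t),\tfrac{\pi}{2}]$. On the region $\{r(t)\le x\le\tfrac{\pi}{2},\ t\ge T_1\}$ I would compare $\phi$ with $\Theta_{r(t)}$: the two agree ($=0$) at $x=\tfrac{\pi}{2}$, and at the moving left boundary $\phi(r(t),t)<\tfrac{\pi}{2}=\Theta_{r(t)}(r(t))$. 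Because this region expands to the left, every point first enters it exactly when the barrier there equals $\tfrac{\pi}{2}\ge\phi$, so no unfavourable initial data is produced; this device replaces an initial comparison at a fixed time, and the slack terms $\tfrac{\omega\cos b}{\ln t}$ and $\e_1$ absorb the error in making the moving-boundary comparison rigorous (including passing to a mildly time-dependent modification of $\Theta_{r(t)}$ that stays a super-solution up to the endpoint $x=\tfrac{\pi}{2}$, and in replacing $r(t)$ by the fixed $a$). The comparison principle then yields $\phi\le\Theta_{r(t)}$ on the region; since $\Theta_{r(t)}$ is decreasing and $r(t)<a$ for $t\ge T_1$, for $x\in[b,\tfrac{\pi}{2}]$ this gives $\partial_x f_\eta(x,t)=\tan\phi(x,t)\le\tan\bigl(\tfrac{\pi}{2}\tfrac{\ln\tan(b/2)}{\ln\tan(a/2)}+\tfrac{\omega\cos b}{\ln t}+\e_1\bigr)$, the asserted estimate (the constraint on $\e_1$ keeps the argument of $\tan$ below $\tfrac{\pi}{2}$). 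The displayed limit follows by integrating: $f_\eta(\tfrac{\pi}{2},t)-f_\eta(b,t)=\int_b^{\pi/2}\tan\phi(x,t)\,dx\le\int_b^{\pi/2}\tan\Theta_{r(t)}(x)\,dx\to 0$ by dominated convergence, since $\Theta_{r(t)}(x)\to 0$ pointwise on $(b,\tfrac{\pi}{2}]$ and $\tan\Theta_{r(t)}\le\tan\Theta_a$ is bounded on $[b,\tfrac{\pi}{2}]$ (because $\Theta_a(b)<\tfrac{\pi}{2}$); alternatively, apply the gradient estimate and let $a\to 0$, $\e_1\to 0$.

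The main obstacle I expect is coordinating the moving domain with the validity of the barrier. The head point $u_\eta(0,t)$ goes to $0$ at a rate we do not control, which forces $r(t)$ to simultaneously (i) dominate $u_\eta(0,t)$, (ii) remain inside the horizontal-graph part of $\gamma_t$ — this is exactly where Lemma~\ref{height upper bound} enters, to keep the vertical-graph part in small $x$ — and (iii) decrease slowly enough (a bound of the type $|\tfrac{d}{dt}\ln r(t)|$ controlled) that the time-dependent barrier stays a super-solution all the way to the degenerate endpoint $x=\tfrac{\pi}{2}$. Reconciling these requirements, and thereby pinning down the admissible form of the correction term $\tfrac{\omega\cos b}{\ln t}$, is the technical core; the evolution equation for $\phi$ and the super-solution computation for $\Theta_r$ are routine.
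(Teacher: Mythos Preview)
Your core ingredients match the paper's proof exactly: the angle function $\phi=\arctan(\partial_x f_\eta)$, its evolution equation, and the stationary barrier $\Theta_r(x)=\tfrac{\pi}{2}\,\ln\tan(x/2)/\ln\tan(r/2)$. The paper's supersolution is precisely $\varphi(x,t)=\Theta_a(x)+\tfrac{\omega\cos x}{\ln t}+\e_1$, so your barrier computation is on target. Where you diverge is in the domain set-up, and there you have over-complicated matters.

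The paper works on the \emph{fixed} rectangle $[a,\pi/2]\times[T,\infty)$, choosing $T$ so large that $u_\eta(0,t)<a/2$ for $t\ge T$ (so $f_\eta$ is a horizontal graph over $[a,\pi/2]$). There is no moving boundary. The role of the term $\tfrac{\omega\cos x}{\ln t}$ is not to ``absorb the error in making the moving-boundary comparison rigorous'' but simply to enforce the \emph{initial} inequality $\varphi(\cdot,T)\ge\phi(\cdot,T)$ on the fixed domain: one picks $\delta$ with $\phi(x,T)<\e_1$ for $x\in[\pi/2-\delta,\pi/2]$ (possible since $\phi(\pi/2,T)=0$), then sets $\omega=\tfrac{\pi\ln T}{2\sin\delta}$, so that $\tfrac{\omega\cos x}{\ln T}\ge\pi/2\ge\phi(x,T)$ on $[a,\pi/2-\delta]$. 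Because $\cos(\pi/2)=0$, this correction vanishes at the right endpoint, where the constant $\e_1>0=\phi(\pi/2,t)$ supplies the margin; at $x=a$ the leading term already equals $\pi/2$. The paper then verifies by a short explicit sign computation (splitting on the sign of $\varphi_{xx}$) that $\varphi$ is a supersolution for the linear operator $L\psi=\psi_t-(n-1)\tfrac{\cos x}{\sin x}\psi_x-\tfrac{\psi_{xx}}{1+f_x^2}$, for which $L\phi<0$, and applies the standard maximum principle.

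Your moving-domain device with $r(t)\searrow 0$ creates the very difficulty you flag as the main obstacle: $\partial_t\Theta_{r(t)}<0$ works against the supersolution inequality, and you have no quantitative control on how fast $u_\eta(0,t)\to 0$, hence none on an admissible $r(t)$. This obstacle is self-inflicted and simply does not arise on a fixed domain. Your parenthetical ``replacing $r(t)$ by the fixed $a$'' is in fact the whole argument; once you do that, only an initial comparison at time $T$ remains, and the $\omega$ term handles it in one stroke. The integral bound for $f_\eta(\pi/2,t)-f_\eta(b,t)$ then follows exactly as you indicate, by letting $a\to 0$ and $\e_1\to 0$.
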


\begin{proof}

Given any $0 < a < b < \frac{\pi}{4}$. Since $\lim\limits_{t \to \infty} u_{\eta} (0, t) = 0$, we know that there exists $T > 10$ such that for any $t \geq T$, $u_{\eta}(0,t) < \frac{a}{2}$. From the relations between $u_\eta$ and $f_\eta$, see Definition \ref{def:initial}, $f_{\eta}(x,t)$ is a smooth function over $[a, \frac{\pi}{2}]$ for all $t \geq T$. For simplicity, we will use $f(x, t)$ to express the function $f_{\eta}(x,t)$ restricted on the interval $[a, \frac{\pi}{2}]$, which is smooth for all time $t \geq T$. We know
\begin{align*}
f_t = \frac{f_{xx}}{1+f_x^2} + (n-1) \frac{\cos x}{\sin x} f_x, \qquad f_x > 0 \text{ for all } x \in [a,\frac{\pi}{2}).
\end{align*}

Let $\phi(x,t) = \arctan (f_x(x,t))$, $x \in [a,\frac{\pi}{2}]$, $t \in [T, \infty)$. Then $0 \leq \phi(x,t) < \frac{\pi}{2}$, and $f_x(x,t) = \tan (\phi(x,t))$. Then
\begin{align*}
\phi_x(x,t) = \frac{f_{xx}(x,t)}{1+f_x^2(x,t)}, \qquad f_t = \phi_x + (n-1)\frac{\cos x}{\sin x} f_x.
\end{align*}

Moreover, $$\phi_t = \frac{1}{1+f_x^2} (f_x)_t = \frac{1}{1+f_x^2} (f_t)_x = \frac{1}{1+f_x^2} (\phi_{xx} + (n-1) \frac{\cos x}{\sin x} f_{xx} - (n-1) \frac{f_x}{\sin^2 x}).$$ 

Therefore,

\begin{equation} \label{phi bound}
\phi_t - (n-1)\frac{\cos x}{\sin x} \phi_x - \frac{1}{1+f_x^2} \phi_{xx} = - (n-1)\frac{f_x}{\sin^2 x(1+f_x^2)} < 0.
\end{equation}

Let $\mu = -\frac{\pi}{2 \ln (\tan(\frac{a}{2}))}$ and $\omega$ be a positive constant to be determined later. Define $\varphi(x,t) = -\mu \ln (\tan \frac{x}{2}) + \frac{\omega \cos x}{\ln t} + \e_1$. Then for $x \in [a, \frac{\pi}{2}]$, $t \geq T$, we have
\begin{equation*}
\varphi_x = -\frac{\mu}{\sin x} - \frac{\omega \sin x}{\ln t} < 0, \quad \varphi_{xx} = \frac{\mu \cos x}{\sin^2 x} - \frac{\omega \cos x}{\ln t}.   
\end{equation*}
Then
\begin{equation*}
\begin{aligned}
\text{If }\varphi_{xx} \geq 0, \quad &\varphi_t - (n-1)\frac{\cos x}{\sin x} \varphi_x - \frac{\varphi_{xx}}{1+f_x^2} \geq \varphi_t - (n-1)\frac{\cos x}{\sin x} \varphi_x - \varphi_{xx} \\
&\geq \frac{-\omega \cos x}{t (\ln t)^2} + \frac{(n-1) \mu \cos x}{\sin^2 x} + \frac{(n-1) \omega \cos x}{\ln t} - \frac{\mu \cos x}{\sin^2 x} + \frac{\omega \cos x}{\ln t} \\
&= (n-2) \frac{\mu \cos x}{\sin^2 x} + \frac{\omega \cos x}{\ln t} (n - \frac{1}{t \ln t}) \geq 0. \\
\text{If }\varphi_{xx} < 0, \quad &\varphi_t - (n-1)\frac{\cos x}{\sin x} \varphi_x - \frac{\varphi_{xx}}{1+f_x^2} \geq \varphi_t - (n-1)\frac{\cos x}{\sin x} \varphi_x \\
&\geq \frac{-\omega \cos x}{t (\ln t)^2} + \frac{(n-1) \mu \cos x}{\sin^2 x} + \frac{(n-1) \omega \cos x}{\ln t}  \\
&= (n-1) \frac{\mu \cos x}{\sin^2 x} + \frac{\omega \cos x}{\ln t} (n - 1 -  \frac{1}{t \ln t}) \geq 0.
\end{aligned}
\end{equation*}

Since $f_x(\frac{\pi}{2},T) = 0$, we have $\phi(\frac{\pi}{2},T) = 0$, and there exists $0< \delta < \frac{\pi}{2}-b$ such that $\phi(x,T) < \e_1$ for all $x \in [\frac{\pi}{2} - \delta, \frac{\pi}{2}]$. Let $\omega = \frac{\pi \ln T}{2 \sin \delta}$, then for $x \in [\frac{\pi}{2} - \delta, \frac{\pi}{2}]$, $\varphi(x,T) \geq \e_1 > \phi(x,T)$; for $x \in [a, \frac{\pi}{2} - \delta)$, $\varphi(x,T) \geq \frac{\omega \cos x}{\ln T} \geq \frac{\pi}{2} \geq \phi(x,T)$.

Thus 
\begin{align*}
\begin{aligned}
& \varphi(x,T) \geq \phi(x,T), \\
&\varphi_t - (n-1) \frac{\cos x}{\sin x} \varphi_x - \frac{\varphi_{xx}}{1+f_x^2} \geq 0 > \phi_t - (n-1) \frac{\cos x}{\sin x} \phi_x - \frac{\phi_{xx}}{1+f_x^2} , \\
&\varphi(a,t) \geq -\mu \ln (\tan \frac{a}{2}) \geq \frac{\pi}{2} \geq \phi(a,t), \quad \varphi(\frac{\pi}{2},t) > 0 = \phi(\frac{\pi}{2},t).
\end{aligned}
\end{align*}

Now we can apply the parabolic maximum principle to conclude that $\varphi (x,t) \geq \phi(x,t)$ for all $x \in [a, \frac{\pi}{2}]$ and $ t \geq T_1$. On the other hand, for any $x \in [b, \frac{\pi}{2}]$,
\begin{equation*}
\varphi(x,t) \leq \varphi(b,t) = \frac{\pi \ln (\tan \frac{b}{2})}{2 \ln (\tan\frac{a}{2})} + \frac{\omega \cos b}{\ln t} + \e_1. 
\end{equation*}Thus there exists $T_1 > T$ such that for all $x \in [b, \frac{\pi}{2}]$ and $t \geq T_1$, $\varphi(x,t) < \frac{\pi}{2}$. In summary, for $x \in [b,\frac{\pi}{2}]$, $t \geq T_1$, we have the gradient estimate
\begin{equation}
    f_x(x,t) = \tan (\phi(x,t)) \leq \tan (\varphi(x,t)) \leq \tan \left(\frac{\pi \ln (\tan \frac{b}{2})}{2 \ln (\tan (\frac{a}{2}))} + \frac{\omega \cos b}{\ln t} + \e_1 \right).
\end{equation}
Next, integrating this gradient bound yields
\begin{equation*}
f(\frac{\pi}{2},t) - f(b,t) = \int_b^{\frac{\pi}{2}} f_x(x,t) dx \leq (\frac{\pi}{2} - b) \tan \left(\frac{\pi \ln (\tan \frac{b}{2})}{2 \ln (\tan (\frac{a}{2}))} + \frac{\omega \cos b}{\ln t} + \e_1 \right).
\end{equation*}

Let $t \to \infty$, 
\begin{align*}
0 \leq \limsup\limits_{t \to \infty} [f(\frac{\pi}{2},t) - f(b,t)] \leq (\frac{\pi}{2} - b) \tan \left(\frac{\pi \ln (\tan \frac{b}{2})}{2 \ln (\tan (\frac{a}{2}))}+\e_1\right).
\end{align*}

Let $a \to 0$, \begin{align*}
0 \leq \limsup\limits_{t \to \infty} [f(\frac{\pi}{2},t) - f(b,t)] \leq (\frac{\pi}{2} - b) \tan \e_1.
\end{align*}

Since $\e_1$ can be chosen arbitrarily small, we have $
\lim\limits_{t \to \infty} [f(\frac{\pi}{2},t) - f(b,t)] = 0$.

\end{proof}

The gradient estimate yields the following two corollaries, which can be combined to show that $f_{\eta}(x,t)$ will $C^1$ converge to $0$.

\begin{coro} \label{f uniform convergence}
$f_{\eta}(x, t)$ converges to $0$ uniformly as $t \to \infty$, for all $x \in (0,\frac{\pi}{2}]$.
\end{coro}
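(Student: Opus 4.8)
The plan is to combine the two conclusions already available: the height of the section curve is monotone decreasing (Lemma \ref{height decreasing}), and the oscillation of $f_\eta(\cdot,t)$ over $[b,\tfrac\pi2]$ vanishes in the limit for every fixed $b\in(0,\tfrac\pi4)$ (the last assertion of Proposition \ref{gradient estimate}). The only thing to add is control of $f_\eta$ on the remaining left piece $(u_\eta(0,t),b]$, and there the point is that $f_\eta$ is squeezed between $0$ (it is positive, being an ascending graph) and the linear barrier of Lemma \ref{height upper bound}.

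First I would fix $\eps>0$. Using Lemma \ref{height upper bound}, choose $b\in(0,4\alpha)$ so small that $\tfrac1{4n\alpha}b<\tfrac\eps2$; since $f_\eta(x,t)\le \tfrac1{4n\alpha}x$ for $x\in(u_\eta(0,t),4\alpha]$ and $f_\eta$ is increasing in $x$ (Proposition \ref{consistent}), this gives $0<f_\eta(x,t)<\tfrac\eps2$ for all $x\in(u_\eta(0,t),b]$ and all $t$. Next, by the last line of Proposition \ref{gradient estimate}, $f_\eta(\tfrac\pi2,t)-f_\eta(b,t)\to0$, so there is $T_2$ with $f_\eta(\tfrac\pi2,t)-f_\eta(b,t)<\tfrac\eps2$ for $t\ge T_2$. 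Since $\lim_{t\to\infty}u_\eta(0,t)=0$ we may also assume $u_\eta(0,t)<b$ for $t\ge T_2$, so that $f_\eta(b,t)$ is defined. For $t\ge T_2$ and $x\in[b,\tfrac\pi2]$, monotonicity in $x$ gives
\[
0<f_\eta(x,t)\le f_\eta(\tfrac\pi2,t)<f_\eta(b,t)+\tfrac\eps2<\tfrac\eps2+\tfrac\eps2=\eps,
\]
using $f_\eta(b,t)<\tfrac\eps2$ from the first step. Combined with the bound on $(u_\eta(0,t),b]$, we get $0<f_\eta(x,t)<\eps$ for all $x\in(u_\eta(0,t),\tfrac\pi2]$ and all $t\ge T_2$. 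Since $\eps$ was arbitrary and $f_\eta(\cdot,t)$ is defined on $(u_\eta(0,t),\tfrac\pi2]$ with $u_\eta(0,t)\to0$, this is exactly uniform convergence of $f_\eta(x,t)$ to $0$ on $(0,\tfrac\pi2]$ as $t\to\infty$.

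I do not expect a serious obstacle here; the work was all done in Proposition \ref{gradient estimate}. The one point that needs a little care is that $f_\eta(\cdot,t)$ is only defined on the shrinking interval $(u_\eta(0,t),\tfrac\pi2]$, so the statement ``converges uniformly on $(0,\tfrac\pi2]$'' should be read as: for every $\eps>0$ there is $T_2$ so that for $t\ge T_2$ the function $f_\eta(\cdot,t)$ is defined on all of $(0,\tfrac\pi2]$ except a subinterval $(0,u_\eta(0,t)]$ of length tending to $0$, and $0<f_\eta<\eps$ wherever it is defined. Making sure the two regimes $(u_\eta(0,t),b]$ and $[b,\tfrac\pi2]$ overlap correctly — which is why one first fixes $b$ from Lemma \ref{height upper bound} and only afterwards enlarges $T_2$ so that $u_\eta(0,t)<b$ — is the only bookkeeping subtlety.
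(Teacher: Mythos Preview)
Your argument is correct and is essentially the same as the paper's: both combine Lemma \ref{height upper bound} to control $f_\eta$ near the head point with the last assertion of Proposition \ref{gradient estimate} to kill the remaining oscillation, and then invoke monotonicity in $x$. The paper phrases it by first showing $\lim_{t\to\infty} f_\eta(\tfrac\pi2,t)=0$ via the squeeze $\lim f_\eta(\tfrac\pi2,t)=\lim f_\eta(b,t)\le \tfrac{b}{4n\alpha}$ and then letting $b\to0$, whereas you give a direct $\eps$--$T$ argument; your added remark about the shrinking domain of definition is a useful clarification the paper leaves implicit.
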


\begin{proof}

By Lemma \ref{height decreasing}, we know $\lim\limits_{t \to \infty} f_{\eta}(\frac{\pi}{2},t)$ exists. By Lemma \ref{height upper bound}, Proposition \ref{gradient estimate}, for any $0 < b \leq 4\alpha$,
\begin{align*}
0 \leq \lim\limits_{t \to \infty} f_{\eta}(\frac{\pi}{2},t) = \lim\limits_{t \to \infty} f_{\eta}(b,t) \leq \frac{b}{4n\alpha}.
\end{align*}

Let $b \to 0$, we get $\lim\limits_{t \to \infty} f_{\eta}(\frac{\pi}{2},t) = 0$. Since $f_{\eta}(\cdot, t)$ is a strictly increasing function, thus $f_{\eta}(x,t)$ converges to $0$ as $t \to \infty$ and this convergence is uniform in $x$. 

\end{proof}

\begin{coro} \label{fx uniformly convergence}
Given $0 < b < \frac{\pi}{2}$, $\frac{\partial}{\partial x} f_{\eta}(x,t)$ converges to $0$ uniformly as $t \to \infty$, for all $x \in [b, \frac{\pi}{2}]$.
\end{coro}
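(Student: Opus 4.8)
The plan is to read the corollary off from the quantitative gradient bound of Proposition \ref{gradient estimate}, sending its three error contributions to $0$ in the correct order. Two preliminary reductions: since $f_\eta(\cdot,t)$ is strictly increasing, we have $\frac{\partial}{\partial x} f_\eta \geq 0$ on $[b,\frac{\pi}{2}]$, with equality at $x = \frac{\pi}{2}$, so it suffices to prove that for every $\e > 0$ there is a time $T$ with $\frac{\partial}{\partial x} f_\eta(x,t) < \e$ for all $x \in [b, \frac{\pi}{2}]$ and $t \geq T$ (here we use that $u_\eta(0,t) \to 0$, so that $[b,\frac{\pi}{2}]$ lies in the horizontal graph region once $t$ is large). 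Also, we may assume $b < \frac{\pi}{4}$, replacing $b$ by $\frac{\pi}{8}$ otherwise, since the bound on $[\frac{\pi}{8}, \frac{\pi}{2}]$ dominates the bound on $[b, \frac{\pi}{2}]$.

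Given $\e > 0$, fix $\e_1 \in (0, \frac{1}{4} \arctan \e)$. Choose $a \in (0, b)$ small enough that
\[
\frac{\pi \ln(\tan\frac{b}{2})}{2 \ln(\tan\frac{a}{2})} < \e_1 ;
\]
this is possible because $\ln(\tan\frac{b}{2})$ is a fixed negative number while $\ln(\tan\frac{a}{2}) \to -\infty$ as $a \to 0^+$. Shrinking $a$ further if needed, we may also assume $1 - \frac{\ln(\tan(b/2))}{\ln(\tan(a/2))} > \frac{1}{2} > \e_1$, so that $\e_1$ is an admissible choice in Proposition \ref{gradient estimate} for the pair $a < b < \frac{\pi}{4}$. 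The proposition then provides $\omega = \omega(a,\e_1)$ and $T_1 = T_1(a,b,\e_1)$ with
\[
\frac{\partial}{\partial x} f_\eta(x,t) \leq \tan\left( \frac{\pi \ln(\tan\frac{b}{2})}{2 \ln(\tan\frac{a}{2})} + \frac{\omega \cos b}{\ln t} + \e_1 \right) < \tan\left( 2\e_1 + \frac{\omega \cos b}{\ln t} \right)
\]
for all $x \in [b, \frac{\pi}{2}]$ and $t \geq T_1$.

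Since $\omega \cos b / \ln t \to 0$ as $t \to \infty$ and $\arctan \e - 2\e_1 > 0$ by the choice of $\e_1$, there is $T \geq T_1$ with $\omega \cos b / \ln t < \arctan \e - 2\e_1$ for all $t \geq T$. For such $t$ the argument of the tangent above lies in $(0, \arctan \e) \subset (0, \frac{\pi}{2})$, so monotonicity of $\tan$ gives $\frac{\partial}{\partial x} f_\eta(x,t) < \e$ on $[b, \frac{\pi}{2}]$. Combined with $\frac{\partial}{\partial x} f_\eta \geq 0$, this is the claimed uniform convergence to $0$. I do not expect a genuine obstacle here: the one point requiring care is the order of the choices ($\e_1$, then $a$, then $T$), so that the admissibility constraint on $\e_1$ in Proposition \ref{gradient estimate} and the decay of the $1/\ln t$ term hold simultaneously.
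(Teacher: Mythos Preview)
Your proof is correct and follows essentially the same route as the paper's: apply Proposition \ref{gradient estimate}, choose $a$ small and $\e_1$ small so the first and third terms in the argument of $\tan$ are tiny, then take $t$ large to kill the $\omega\cos b/\ln t$ term. The paper arrives at the bound $\tan\e$ while you arrange for the bound $\e$ directly via $\arctan\e$; your explicit reduction to $b<\frac{\pi}{4}$ is a detail the paper omits but which is needed since Proposition \ref{gradient estimate} is only stated for $b<\frac{\pi}{4}$.
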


\begin{proof}
For any $\e > 0$, there exists $0 < a < b,$ and $0 < \e_1 < 1 -  \frac{\ln (\tan \frac{b}{2})}{\ln (\tan\frac{a}{2})}$ such that $\frac{\pi \ln (\tan \frac{b}{2})}{2 \ln (\tan (\frac{a}{2}))} < \frac{\e}{3}$, and $\e_1 < \frac{\e}{3}$. Let $\omega$, $T_1$ be as determined in Proposition \ref{gradient estimate}. Then there exists $T' \geq T_1$ such that for $t \geq T'$, $\frac{\omega \cos b}{\ln t} < \frac{\e}{3}$. By Proposition \ref{gradient estimate}, for $x \in [b,\frac{\pi}{2}]$, $t \geq T'$,
\begin{align*}
\frac{\partial}{\partial x} f_{\eta}(x,t) \leq \tan \left(\frac{\pi \ln (\tan \frac{b}{2})}{2 \ln (\tan (\frac{a}{2}))} + \frac{\omega \cos b}{\ln t} + \e_1\right) < \tan \e.
\end{align*}

\end{proof}

Let $M(t)$ denote the desired hypersurface with the graph of $f_{\eta}(x,t)$ as its section curve.

\begin{thm} \label{free boundary multiplicity 2}
$(M(t))_{t \in [0,\infty)}$ is an immortal mean curvature flow on $S^n \times [-1,1]$, and the forward limit of this flow is the minimal sphere $S^n \times \{0\}$ with multiplicity $2$.
\end{thm}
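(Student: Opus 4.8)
The plan is to read the theorem off from the estimates already established. Immortality, together with the fact that $(M(t))_{t\ge 0}$ is a nonstatic mean curvature flow of smooth embedded connected hypersurfaces, is precisely the content of Theorem~\ref{free boundary immortal flow} and its construction, so the only thing left is the multiplicity-$2$ convergence. To fix notation, write $N,S\in S^n$ for the north and south poles: the profile curve of $M(t)$ is obtained from the section curve $\{(x,f_\eta(x,t)):x\in[u_\eta(0,t),\tfrac{\pi}{2}]\}$ by reflecting across $\{x=\tfrac{\pi}{2}\}$ and across $\{y=0\}$ and then rotating, so it lies over $\{x\in[u_\eta(0,t),\pi-u_\eta(0,t)]\}$, never meeting the axes $\{x=0\}$, $\{x=\pi\}$; thus $M(t)$ is diffeomorphic to $S^1\times S^{n-1}$, with two ``necks'' given by the geodesic $(n-1)$-spheres of radius $u_\eta(0,t)$ about $N$ and $S$ inside $S^n\times\{0\}$. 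Since $\lim_{t\to\infty}u_\eta(0,t)=0$, these necks contract to the poles, and everything reduces to showing $M(t)\to S^n\times\{0\}$ with multiplicity $2$.

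First I would record the coarse picture: by Corollary~\ref{f uniform convergence}, $f_\eta(\cdot,t)\to 0$ uniformly on $(0,\tfrac{\pi}{2}]$, and by Lemma~\ref{height decreasing} the heights are nonincreasing, so $M(t)$ stays in a fixed compact subset of $S^n\times(-1,1)$ and converges to $S^n\times\{0\}$ in the Hausdorff distance. The key step is to upgrade this to smooth convergence away from the poles. I would fix $0<b<\tfrac{\pi}{2}$, let $F_\eta(\cdot,t)$ denote the even extension of $f_\eta(\cdot,t)$ about $x=\tfrac{\pi}{2}$, and note that once $u_\eta(0,t)<\tfrac{b}{2}$ the portion of $M(t)$ lying over $\{x\in[b,\pi-b]\}$ is exactly the disjoint union of the two graphs $y=\pm F_\eta(x,t)$, on which $F_\eta$ and $\partial_x F_\eta$ are uniformly small for large $t$ by Corollaries~\ref{f uniform convergence} and~\ref{fx uniformly convergence}. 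The horizontal graph equation in \eqref{mcf equation} is then uniformly parabolic with smooth bounded coefficients on $[b,\pi-b]\times[t,t+1]$ (the reflection symmetry of the flow guarantees that this equation, and the smoothness of $F_\eta$, persist across $x=\tfrac{\pi}{2}$, since all odd-order $x$-derivatives of $f_\eta(\cdot,t)$ vanish there), so interior parabolic Schauder estimates (e.g.\ \cite{ladyzhenskaia1968linear}) would bound all higher $x$-derivatives of $F_\eta(\cdot,t)$ on $[b',\pi-b']$, for every $b'\in(b,\tfrac{\pi}{2})$, uniformly for large $t$. Combined with the $C^1$-convergence to $0$, this gives $F_\eta(\cdot,t)\to 0$ in $C^\infty([b',\pi-b'])$, i.e.\ each of the two sheets of $M(t)$ over $\{x\in[b',\pi-b']\}$ converges smoothly to $S^n\times\{0\}$.

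To finish I would combine the two-sheet structure with an area count. For every open $U\Subset S^n\setminus\{N,S\}$ and $t$ large, the part of $M(t)$ over $U$ consists of exactly two disjoint normal graphs over $U\times\{0\}$; since the Hausdorff limit is $S^n\times\{0\}$, any subsequential varifold limit $V$ of $M(t)$ is integer-multiplicity $[S^n\times\{0\}]$, equal to $2[S^n\times\{0\}]$ off every neighborhood of $\{N,S\}$. To rule out extra mass at the poles I would use that $\area(M(t))$ equals a fixed constant times the $g_{\text{rot}}$-length of its profile curve, hence (up to that constant) $4\int_{u_\eta(0,t)}^{\pi/2}(\sin x)^{n-1}\sqrt{1+(\partial_x f_\eta(x,t))^2}\,dx$: splitting at a small $b\le 4\alpha$, Corollary~\ref{fx uniformly convergence} makes the integrand converge uniformly to $(\sin x)^{n-1}$ on $[b,\tfrac{\pi}{2}]$, while Lemma~\ref{height upper bound} gives $f_\eta(b,t)\le\tfrac{b}{4n\alpha}$ and hence bounds the Euclidean length of the monotone arc over $[u_\eta(0,t),b]$ by $b+\tfrac{b}{4n\alpha}$, so the remaining contribution is at most $(\sin b)^{n-1}(b+\tfrac{b}{4n\alpha})$, uniformly in $t$. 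Sending $t\to\infty$ and then $b\to 0$ shows $\area(M(t))\to 2\,\area(S^n\times\{0\})$, which together with the previous sentence forces $V=2[S^n\times\{0\}]$; since all subsequential limits agree, $M(t)\to 2[S^n\times\{0\}]$, which is the asserted multiplicity-$2$ convergence, and with Theorem~\ref{free boundary immortal flow} this would complete the proof. The hard part is the second paragraph: promoting the $C^1$ gradient bound of Proposition~\ref{gradient estimate}/Corollary~\ref{fx uniformly convergence} to locally uniform $C^\infty$ control by parabolic bootstrapping, with the one genuinely delicate point being smoothness across the reflection hypersurface $\{x=\tfrac{\pi}{2}\}$; the varifold and area bookkeeping near the poles in the last step should be routine once the two-sheet picture is in hand.
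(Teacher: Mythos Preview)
Your proposal is correct and follows essentially the same route as the paper: immortality from Theorem~\ref{free boundary immortal flow}, then $C^1$-convergence of $f_\eta$ to $0$ on every $[\varepsilon,\tfrac{\pi}{2}]$ via Corollaries~\ref{f uniform convergence} and~\ref{fx uniformly convergence}, together with Lemma~\ref{height upper bound} to confine the piece of the section curve over $[0,\varepsilon]$ to a small rectangle. The paper's own proof stops at exactly this $C^1$-statement and declares the theorem proved; you go further by bootstrapping to $C^\infty$ via interior parabolic estimates and by checking the area limit to exclude mass concentration at the poles, which makes explicit what the paper leaves implicit in its informal definition of higher-multiplicity convergence. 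Neither of these additions is difficult, and your treatment of the reflection at $x=\tfrac{\pi}{2}$ is handled correctly by the symmetry of the flow, so there is no gap.
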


\begin{proof}

Following from the proof of Theorem \ref{free boundary immortal flow}, we only need to prove that for any $\e > 0$, $f_{\eta}(x,t)|_{x\in[\e,\frac{\pi}{2}]}$ converges to $0$ in $C^1$. Given $\e > 0$, we know that there exists $T > 0$ such that $u_{\eta}(0,t) < \e$ for $t \geq T$. Recall from Definition \ref{def:initial} that $u_{\eta}$ is the vertical graph function of $f_{\eta}$, and $\rho_{\eta}$ represents the graph of the function $f_{\eta}$. Therefore $f_{\eta}(x, t)|_{[\e, R]}$ is well defined for all $t \geq T$. By Lemma \ref{height upper bound}, $\rho_{\eta}(\cdot, t) |_{[0, \e]}$ is contained in the rectangle $[0,\e] \times [0, \frac{\e}{4n\alpha}]$ for $t \geq T$. By Corollary \ref{f uniform convergence} and \ref{fx uniformly convergence}, we know $f_{\eta}(\cdot, t) |_{[\e, \frac{\pi}{2}]}$ $C^1$ converges to $0$ as $t \to \infty$. This completes the proof.

\end{proof}

\appendix
\section{Curves in the plane with conformal metrics}\label{Appendix:curves}
Given a $C^2$ function $\phi(x,y)$ defined on an open subset $U$ of the plane, let us consider the metric $g_\phi=e^{2\phi}(dx^2+dy^2)$, and the corresponding Levi-Civita connection $\nabla$. We present some computations related to curves in $(U,g_\phi)$ in this section. 

By standard calculations, we can write down the Christoffel symbols:
\begin{align*}
    \Gamma_{xx}^x = \pr_x\phi,
     \quad 
    \Gamma_{xx}^y = -\pr_y\phi,
    \quad
    \Gamma_{xy}^x = \pr_y\phi
    ,
    \quad
    \Gamma_{xy}^y = \pr_x\phi,
    \quad
    \Gamma_{yy}^x = -\pr_x\phi,
     \quad 
    \Gamma_{yy}^y = \pr_y\phi.
\end{align*}

Suppose $\gamma:I\to U$ is a $C^2$-curve, where $I$ can be an interval or $S^1$. We consider the coordinate expression $\gamma(s)=(x(s),y(s))$. Then $\gamma'(s)=x'\pr_x+y'\pr_y$ and the length $\ell$ of $\gamma'$ is $e^\phi\sqrt{(x')^2+(y')^2}$. Let $E_s=\ell^{-1}\gamma'$. Then the geodesic curvature vector of $\gamma$ is given by
\begin{equation}
    \begin{split}
        (\nabla_{E_s}E_s)^\perp
        =&
        \ell^{-2}(x''\pr_x+y''\pr_y)^\perp
        +
        \ell^{-2}(\pr_s\ell)(\gamma')^\perp
        \\
        &+
        \ell^{-2}((x')^2 \nabla_{\pr_x}\pr_x+x'y'\nabla_{\pr_x}\pr_y+x'y'\nabla_{\pr_y}\pr_x+(y')^2\nabla_{\pr_y}\pr_y)^\perp
        \\
        =&
        (\ell^{-2}x''+\ell^{-2}((x')^2-(y')^2)\pr_x\phi
        +2\ell^{-2}x'y'\pr_y\phi
        )\pr_x^\perp
        \\&
        +
        (\ell^{-2}y''+\ell^{-2}((y')^2
        -(x')^2)\pr_y\phi
        +2\ell^{-2}x'y'\pr_x\phi
        )\pr_y^\perp.
    \end{split}
\end{equation}

Choose the unit normal vector field $\bn_\phi=\ell^{-1}(-y'\pr_x+x'\pr_y)=e^{-\phi}\bn$, where $\bn$ is the normal vector of the curve in the Euclidean metric. This implies that 
\[
\pr_x^\perp 
=
-\ell^{-1}y'e^{2\phi}\bn_\phi,
\quad 
\pr_y^\perp
=
\ell^{-1}x'e^{2\phi}\bn_\phi.
\]

Therefore, the curvature vector $\vec\kappa_\phi$ is given by
\begin{equation}
\begin{split}
    \vec\kappa_\phi=&
    (-(\ell^{-2}x''+
    \ell^{-2}((x')^2
    -(y')^2)\pr_x\phi
    +2\ell^{-2}x'y'\pr_y\phi
    )\ell^{-1}y'e^{2\phi}
        \\&
        +
        (\ell^{-2}y''+\ell^{-2}((y')^2
        -(x')^2)\pr_y\phi+2\ell^{-2}x'y'\pr_x\phi
        )\ell^{-1}x'e^{2\phi})
        \bn_\phi\\
        =&
        (
        \ell^{-3}(x'y''-x''y')
        +e^{-2\phi}\ell^{-1}
        (
        y'\pr_x\phi
        -
        x'\pr_y\phi
        ))e^{2\phi}\bn_\phi
        .
\end{split}
\end{equation}

Note that we can interpret this as
\begin{equation}
    \vec \kappa_\phi=e^{-\phi}(\vec\kappa+\nabla^\perp \phi),
\end{equation}
where $\vec\kappa$ and $\nabla^\perp$ are the curvature and normal projection of the gradient of the curve under the Euclidean metric respectively. 

\begin{example}
    Consider $\phi=f(x)$ only depending on $x$. Suppose $f(x)$ is chosen such that the curves in $(U,g_f)$ give the area of the hypersurface that is rotating the curve in some Riemannian manifold, for example, $f(x)=(n-1)\log x$ (gives the area of rotationally symmetric hypersurfaces in $\R^{n+1}$) or $f(x)=(n-1)\log \sin x$ (gives the area of rotationally symmetric hypersurfaces in $S^{n}\times\R$). Suppose the curve is $\gamma$. 
    
    After taking the quotient of $SO(n-1)$ action, we identify the mean curvature vector as $\vec h$, and the unit normal vector as $\bn$. Then from the first variational formula, we have $\vec{\mathbf h}=e^{-2f}\vec\kappa_f$, and $\bn_f=e^{-f}\bn$. After taking the quotient of the mean curvature flow of the hypersurfaces, the profile curves satisfy the equation
    \begin{equation}
        \langle \pr_t \gamma,\bn\rangle_{\text{Euc}} \bn
        =
        \vec{\mathbf{h}}.
    \end{equation}
    Under the conformal metric, this can be expressed as
    \begin{equation}
        \langle \pr_t\gamma,\bn_f\rangle_{g}\bn_f
        =
        e^{-2f}\vec\kappa_f
    \end{equation}
    Notice that this curvature flow is different from the curve shortening flow under the metric $g$ up to the conformal factor.
    
    Now we derive the equation of the graphs under this flow. Suppose $\gamma(x,t)=(x,v(x,t))$, we obtain that
    \begin{equation}
        \begin{split}
            v_t \ell^{-1}x'\bn_f
            =
            (
        \ell^{-3}(x'y''-x''y')
        +e^{-2f}\ell^{-1}
        y'\pr_xf
        )e^{2f}\bn_f
        \end{split}
    \end{equation}
\end{example}
Plugging in $x'=1$, $x''=0$, $y'=v_x$, $y''=v_{xx}$ gives
\begin{equation}
    v_t= 
        \frac{v_{xx}}{1+(v_x)^2}
        + f_x
        v_x.
\end{equation}
Similarly, suppose $\gamma(y,t)=(u(y,t),y)$, we have
\begin{equation}
    u_t= 
        \frac{u_{yy}}{1+(u_y)^2}
        - f_x(u).
\end{equation}

\section{\texorpdfstring{$\lambda$}{Lg}-Angenent curves} \label{Angenent curve}
Let $\lambda>0$. Consider the metric $g_\phi=e^{2\phi}(dx^2+dy^2)$ with $\phi(x,y)=\lambda\log x -(x^2+y^2)/8$, defined on the right half plane $\{(x,y):x>0\}$. We call a closed embedded geodesic under the metric $e^{2\phi}(dx^2+dy^2)$ a {\bf $\lambda$-Angenent curve}. When $\lambda=(n-1)$, rotating the curves by $SO(n-1)$ action gives the Angenent doughnut in $\R^{n+1}$.

Suppose $\bar\gamma(s)=(x(s),y(s))$ is a $\lambda$-Angenent curve. Let $$\gamma(s,t)=\sqrt{-t}\bar{\gamma}(s)=(\sqrt{-t}x(s),\sqrt{-t}y(s))$$ for $t<0$. Because $\bar\gamma$ is a geodesic under the metric $e^{2\phi}(dx^2+dy^2)$, following the computations in Section \ref{Appendix:curves}, we have $\ell^{-3}(x'y''-x''y')
        +e^{-2\phi}\ell^{-1}
        (
        y'\pr_x\phi
        -
        x'\pr_y\phi
        )=0$.

    From now on we will use the Euclidean coordinates on $\R^2$. Suppose $(u(y,t),y)$ is a parametrization of a part of $\gamma(\cdot,t)$, then $u(\sqrt{-t}y(s),t)=\sqrt{-t}x(s)$. Therefore we can conclude that 
    \[
    \pr_t u-\frac{y}{2\sqrt{-t}}\pr_y u
    =
    -\frac{x}{2\sqrt{-t}},
    \quad
    \pr_y u y'=x',
    \quad
    \sqrt{-t}\pr_{yy}u(y')^2+\pr_y u y''
    =
    x''.
    \]
    $\bar\gamma$ being a geodesic implies that 
    \[
    \ell^{-3}(\pr_y uy'y''-\sqrt{-t}\pr_{yy}u(y')^3-\pr_y u y'y'')
    +
    e^{-2\phi}\ell^{-1}y'(\pr_x\phi-\pr_y u\pr_y \phi)=0,
    \]
    namely
    \begin{equation}
       \frac{\sqrt{-t}\pr_{yy}u}{1+(\pr_y u)^2}
       =
       \pr_x\phi-\pr_y u\pr_y \phi
       =
       (\frac{\lambda}{x}-\frac{x}{2})+\partial_y u\frac{y}{2}.
    \end{equation}
    In summary, we obtain the equation
    \begin{equation} \label{lambdaAtorus}
        \pr_t u
        =
        \frac{\pr_{yy}u}{1+(\pr_y u)^2}-\frac{\lambda}{u}.
    \end{equation}

Angenent proved the existence of a simple closed $\lambda$-Angenent curve in \cite{Angenent92_Doughnut}. To have the same notation as \cite{Angenent92_Doughnut}, let us temporarily change the domain of the right-half plane to the upper-half plane by a rotation, and we use the $(x,r)$ as the coordinate in the upper-half plane. Namely, we want to construct a simple closed geodesic in the upper-half plane $\{(x,r):r>0\}$ equipped with the metric $r^{2\lambda}e^{-\frac{x^2+r^2}{4}}(dx^2+dr^2)$. Although in \cite{Angenent92_Doughnut}, Angenent only discussed the cases that $\lambda=n-1$ for positive integers $n\geq 2$, the proof indeed works for general $\lambda>0$ verbatim, by replacing $n$ in \cite[Page 9-16]{Angenent92_Doughnut} by $\lambda+1$. We refer the readers to \cite{Angenent92_Doughnut} for the proof. 

It is worth noting that Drugan-Nguyen \cite{Drugan-Nguyen18_Shrinking} also constructed shrinking doughnuts using a variational method. It is still an open question that whether their construction coincides with Angenent's construction. It seems that their method can also be applied to construct $\lambda$-Angenent curves. 

\medskip

\textbf{Data Availability} Data sharing not applicable to this article as no datasets were generated or analysed during
the current study.

\textbf{Conflict of interest} On behalf of all authors, the corresponding author states that there is no conflict of interest.

\bibliography{GMT}
\bibliographystyle{alpha}

\end{document}